\documentclass[12pt,a4paper]{article}
\linespread{1.3}
\title{Genus $n$ forms over Hyperbolic groups}
\usepackage[dvips]{graphicx}
\usepackage{psfrag, graphicx}
\usepackage{amsmath}
\usepackage{amsthm}
\usepackage{amsfonts}
\theoremstyle{plain}
\newtheorem{lemma}{Lemma}[section]
\newtheorem{thm}[lemma]{Theorem}

\newtheorem{prop}[lemma]{Proposition}
\theoremstyle{definition}
\newtheorem{example}[lemma]{Example}
\newtheorem{definition}[lemma]{Definition}



\setlength{\textwidth}{160mm}                                                   
\setlength{\textheight}{255mm}                                                  
\setlength{\topmargin}{-15mm}                                                     
\setlength{\oddsidemargin}{0mm}
\setlength{\leftmargin}{0mm}         
\setlength{\rightmargin}{0mm}                                                   
\addtolength{\headheight}{5pt}  
\begin{document}
\linespread{1.1}
{\flushleft{\underline{\bf{{\LARGE{Genus $n$ Forms over Hyperbolic
            Groups\hspace{2cm}\hspace{-9.18068pt}}}}}}}
\vspace{12pt}
{\flushright{{\bf{STEVEN FULTHORP}}\\
{{\small{\emph{School of Mathematics and Statistics\\
Merz Court\\
University of Newcastle upon Tyne\\
Newcastle upon Tyne}}\\
\vspace{12pt}
\today \\
\vspace{12pt}
AMS Mathematics Subject Classification: 20F12, 20F65, 20F67\\}}}}

\linespread{1.3}
\begin{abstract}
In 1962 M.J.~Wicks \cite{wicks} gave a list of forms for commutators in both
free groups and free products. Since then similar lists have been
constructed for elements of higher genus. In \cite{vdo1} A.~Vdovina
described a method for the construction of forms for elements of any
genus in free products. We shall give a similar result for the
construction of such forms in any hyperbolic group $H$ and from this
we shall obtain a full list of forms for commutators in $H$.
\end{abstract}
\section{Introduction}
In 1962 M.J.~Wicks \cite{wicks} showed that any 
commutator in a free group or a free product of groups could always be
reduced to a particular form.
\begin{example}
For any free group $F(X)$,
a word $u\neq 1$ in $F(X)$ 
is a commutator in $F(X)$ if and only if
$u$ is conjugate to a cyclically reduced word of the form
$ABCA^{-1}B^{-1}C^{-1}$, with $A,B,C\in F(X)$, where at most one of
$A$, $B$ and $C$ may be
equal to the identity.
\end{example}
\begin{example} 
For any free product $G=*_{i\in I}G_i$, if
$v\in G$ is a commutator, either $v\in wG_iw^{-1}$ for some $w\in G$,
$i\in I$, and $v$ is a commutator in $wG_iw^{-1}$, or some fully
cyclically reduced conjugate of $v$ has one of the following forms.
\begin{enumerate}
\item $Xa_1X^{-1}a_2$ with $X\neq 1$, $a_1\neq 1$, $a_1,a_2\in G_i$
  for some $i\in I$, and $a_1$ conjugate to $a_2^{-1}$ in $G_i$; or
\item $Xa_1Ya_2X^{-1}a_3Y^{-1}a_4$ with $X\neq 1$, $Y\neq 1$, $a_1, a_2,
  a_3, a_4\in G_i$ for some $i\in I$, and $a_4a_3a_2a_1=1$; or
\item $Xa_1Yb_1Za_2X^{-1}b_2Y^{-1}a_3Z^{-1}b_3$ with $a_1, a_2, a_3
  \in G_i$ for some $i\in I$ and $a_3a_2a_1=1$, $b_1, b_2, b_3 \in
  G_j$ for $j\in I$ and $b_3b_2b_1=1$, and either not all of $a_1,
  a_2, a_3, b_1, b_2, b_3$ are in any one free factor of $G$ or each
  of $X, Y, Z$ are nontrivial.
\end{enumerate}
\end{example} 
{\flushleft{We}} call these `forms' for commutators in such groups. 
Similar lists of forms have also been constructed in these settings
for certain products of commutators, see \cite{com1} and \cite{vdo1}, 
and products of squares, see \cite{vdo2}.

In \cite{vdo1} A.~Vdovina described a procedure for constructing forms
for elements of any genus $n$ in a free product. This involved an `extension' over the
free product of
the graph associated with some orientable word. (These terms are
explained below).   
In this paper we establish a similar method for constructing
forms for elements of genus $n$ in hyperbolic groups. We then use this to give a
list of all the possible forms for commutators in
hyperbolic groups as Wicks did for free groups and free products.

We begin by introducing a number of definitions including how we
extend an orientable word over a hyperbolic group. This will put us in
the position to state the main result of this paper (see Theorem \ref{Thexp}), a method for
constructing genus $n$ forms in hyperbolic groups. We follow this by
an example of how to implement this theorem. In Section \ref{prelim} we give 
preliminary results which will be needed throughout the proof of
Theorem \ref{Thexp} before moving on to the proof itself in
Section \ref{startthm}. Finally we end this paper by proving
Proposition \ref{propos} in Section \ref{commutators} which states the
possible forms for commutators in a
hyperbolic group $H$.  
\section{Definitions and Main results}
\subsection{Definitions}
\begin{definition} Let $G$ be a group and $g_1,\ldots ,g_t$ be $t$ elements in $G$. We
define the \emph{genus of $(g_1,\ldots ,g_t)$}, denoted $genus_G(g_1,\ldots
,g_t)$, to be equal to $k$, if $k$  is the
smallest integer such that there exist elements $h_l$, $x_i$, $y_i$, for
$i=1,\ldots ,k$ and $l=1,\ldots ,t$, with
\begin{equation*}
h_1g_1h_1^{-1}\ldots h_tg_th_t^{-1} =[x_1,y_1]\ldots [x_k,y_k].
\end{equation*}
\end{definition}        

Let $H=\langle X|R\rangle$ be a hyperbolic group such that
geodesic triangles in the
Cayley graph $\Gamma _{X}(H)$ are $\delta$-thin. Consider any word $w$ in
$F(X)$. We denote the length of $w$ by $|w|$.  If  $|v|\geq
|w|$ for all words $v$ in $F(X)$ such that $w= _H v$ then we say that
the word
$w$ is 
\emph{minimal} in $H$. Clearly minimal words are  represented by
geodesic paths in $\Gamma _{X}(H)$.
If $w$ is not minimal in $H$ then we use the notation $|w| _H$
to denote the length of a word minimal in $H$ which is equal to $w$ in $H$.
We can think of the $\delta$-thin condition as being  equivalent
to the following condition. Let $w$ and $z$  be any words in $F(X)$
which are minimal in $H$,  with
$w=w_1w_2$ and $z=z_1z_2$. If 
\begin{eqnarray*}
|w_2|=|z_1| &\leq & \frac{1}{2}(|w|+|z|-|wz|_H)\\
\textrm{then}\qquad |w_2z_1|_H & \leq & \delta. 
\end{eqnarray*} 

Let $\mathcal{A}$ be an infinite countable alphabet. We
shall  define a word in $\mathcal{A}^{\pm 1}$ to be \emph{orientable quadratic} if
each letter appears exactly twice,
once with exponent $1$ and once with exponent $-1$.
\begin{definition}
A orientable quadratic word $w$ is said to be \emph{redundant}
if there are letters $x$ and $y$ in $\mathcal{A}^{\pm 1}$ which only occur in
$w$ as subwords of the form $(xy)^{\pm 1}$. A word is called
\emph{irredundant}  otherwise.
\end{definition}    
{\flushleft{We}} use the term {\emph{cyclic word}} to mean the equivalence class $[w]$ of a
word $w$ under the relation which relates two words if one is a cyclic
permutation of the other. When we talk about a word $w$ being cyclic we mean
that $w$ is a representative of the equivalence class $[w]$.
If a word $U$ is both cyclic and orientable quadratic then $U$ is
called an \emph{orientable word} in  $\mathcal{A}^{\pm 1}$.
\begin{definition}[Wicks Form] Let $W$ be an  orientable word.  $W$ is called a {\emph{Wicks
    form}} if  the following conditions hold.
\begin{enumerate}
\item $W$ is freely cyclically reduced and
\item $W$ is irredundant.
\end{enumerate}
\end{definition}
Consider an orientable word $U$ of genus $g$ in  $\mathcal{A}^{\pm
    1}$, that is an orientable word such that  
$genus(U)_{F(\mathcal{A})}=g$. Take the disc $D^2$ and divide its
boundary into $|U|$ segments. Write $U$ counterclockwise around the
boundary of a disc, labelling each segment with a letter of $U$. Let
the segments which are labelled by letters with exponent $1$ be
oriented counterclockwise and the segments which are labelled by
letters with exponent $-1$ be oriented clockwise. Now  identify the
segments labelled by the same letters,
respecting orientation. We obtain a  closed compact surface of genus $g$. After
identification the oriented boundary
of the disc gives us an oriented graph embedded on this surface. We shall label
this graph $\Gamma_U$ and call it the genus $g$ graph associated with
$U$.
\begin{example}
Suppose that we have the orientable word $U=ABCA^{-1}B^{-1}C^{-1}$ we
construct $\Gamma_U$ as shown in Figure \ref{gammaU}.
 \begin{figure}[ht]
\begin{center}
\psfrag{A}{{\scriptsize $A $}}
\psfrag{B}{{\scriptsize $B$}}
\psfrag{C}{{\scriptsize $C$}}
\psfrag{U}{{\scriptsize $\Gamma_U $}}
\includegraphics[scale=0.4]{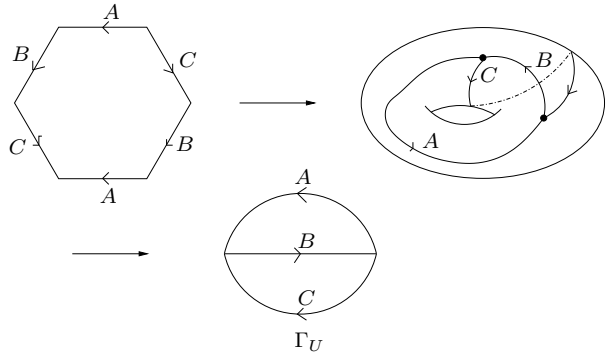}
\caption{Constructing $\Gamma_U$}\label{gammaU}
\end{center}
\end{figure}
\end{example}
{\flushleft{Note}} that the graph $\Gamma _W$ associated to a Wicks form $W$
contains no vertices of degree $1$ or $2$(if it did then rule $1$ or $2$ in the
definition of a Wicks form would be violated).  

Let $\Gamma$ be any oriented connected graph such that an Eulerian
circuit exists in $\Gamma$. Here we are taking Eulerian circuit to be a
circuit which traverses every edge exactly twice once in each
direction. Let $v$ be a vertex of $\Gamma$ of degree $d$ and let the
edges $e_1,\ldots ,e_d$ be incident to $v$ and oriented away from $v$. Note that these are not
necessarily distinct, we may have loops. We define $v$ to be
{\emph{regular}} if the edges can be renumbered such that the cyclic
subwords $e_1^{-1}e_2,\ldots ,e_{d-1}^{-1}e_d,e_d^{-1}e_1$ appear in an
Eulerian circuit. See Figure \ref{regular}.
 \begin{figure}[ht]
\begin{center}
\psfrag{e1}{{\scriptsize $e_1 $}}
\psfrag{e2}{{\scriptsize $e_2$}}
\psfrag{e3}{{\scriptsize $e_3$}}
\psfrag{ed1}{{\scriptsize $e_{d-1}$}}
\psfrag{ed}{{\scriptsize $e_{d}$}}
\includegraphics[scale=0.6]{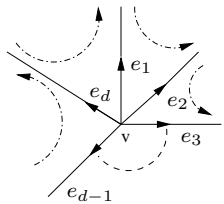}
\caption{Regular vertex $v$}\label{regular}
\end{center}
\end{figure}
If every vertex of $\Gamma$ is regular then we say
that it has a \emph{regular Eulerian circuit}. The following result can easily be deduced from results
found in \cite{vdo1}.
\begin{lemma}
If $U$ is an orientable word in $\mathcal{A}$ then its
associated graph has a regular Eulerian circuit labelled by $U$.
\end{lemma}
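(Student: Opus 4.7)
The plan is to read the Eulerian circuit directly off the boundary of $D^2$, and then verify regularity at each vertex using the cyclic order on edge-ends induced by the embedding of $\Gamma_U$ in its genus $g$ surface.

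First I would observe that traversing $\partial D^2$ counterclockwise reads off the word $U$. Since each letter of $U$ appears once with exponent $+1$ and once with exponent $-1$, and the identification of boundary segments respects these orientations, each edge of $\Gamma_U$ is traversed exactly twice, once in each direction. Hence this boundary walk is a closed walk labelled by $U$ that visits every edge twice in opposite directions, i.e.\ an Eulerian circuit labelled by $U$.

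For regularity, fix a vertex $v$ of $\Gamma_U$ of degree $d$. Then $v$ arises from identifying $d$ division points on $\partial D^2$, and the $d$ disc corners at these points assemble to tile a neighbourhood of $v$ in the surface, giving a cyclic order on the $d$ edge-ends at $v$. Numbering the edges incident to $v$ and oriented away from $v$ as $e_1,\ldots,e_d$ in this cyclic order, I would check that the corner between $e_i$ and $e_{i+1}$ is traversed by $\partial D^2$ as ``enter $v$ along $e_i$ against its orientation, then exit along $e_{i+1}$'', so that the cyclic subword $e_i^{-1}e_{i+1}$ appears in the Eulerian circuit. Since each of the $d$ corners contributes a distinct such subword (indices taken mod $d$), all the required subwords appear, and $v$ is regular.

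The main obstacle is the orientation check at each corner: ensuring the entry letter is $e_i^{-1}$ and the exit letter is $e_{i+1}$, rather than the reverse. This reduces to verifying that the counterclockwise orientation of $\partial D^2$ agrees with the counterclockwise cyclic order of edge-ends used to label the $e_i$, which is a direct consequence of the gluing convention used to build $\Gamma_U$ (segments with exponent $+1$ oriented counterclockwise, segments with exponent $-1$ oriented clockwise). Once this compatibility is in place, regularity follows uniformly at every vertex, paralleling the argument for free products in \cite{vdo1}.
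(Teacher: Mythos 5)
Your argument is correct and complete. Note that the paper does not actually prove this lemma: it simply remarks that the statement ``can easily be deduced from results found in \cite{vdo1}'' and moves on, so there is no in-paper proof to match against. What you have written is the standard self-contained justification: the counterclockwise boundary walk of $D^2$ descends, after the identifications, to a closed walk in $\Gamma_U$ labelled by $U$ that covers each edge twice in opposite directions, and regularity at a vertex $v$ follows because the $d$ disc corners identified at $v$ tile a neighbourhood of $v$, so each corner's two flanking segment-ends become a cyclically adjacent pair of edge-ends, and the walk turns each corner reading ``in along $e_i^{-1}$, out along $e_{i+1}$'' consistently with the orientation of the disc. Two points in your write-up are doing real work and are handled correctly: (a) the count that exactly $d$ division points are identified to a degree-$d$ vertex, so the corners are in bijection with the adjacent pairs of edge-ends; and (b) working at the level of edge-ends (half-edges) rather than edges, which is what makes the argument go through when $\Gamma_U$ has loops at $v$ --- a case the paper's definition of regularity glosses over by saying the $e_i$ ``are not necessarily distinct.'' Your version is arguably preferable to the paper's citation, since it makes explicit the ribbon-graph compatibility (the cyclic order at $v$ induced by the surface agrees with the order in which the boundary walk visits the corners) on which the later extension construction implicitly relies.
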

Let $\Gamma$ be any graph with a regular Eulerian circuit, $T$
say. Write $T$ around the boundary of a disc and identify the edges,
respecting orientation, to obtain a surface $S$. Then the Euler
characteristic $\chi(S)$ is given by the formula $v-e+1$, where $v$
and $e$ are the  number of vertices and edges respectively in
$\Gamma$ and $\chi(S)$ is equal to $2-2genus(S)$. We 
define the $genus(\Gamma)$ to be equal to $genus(S)$. It follows that this
is given by 
\begin{equation}\label{gengra}
genus(\Gamma)=\frac{1-v+e}{2}.
\end{equation}
\subsection{Extension of an orientable word over a Hyperbolic group $H$}
Let $U$ be an orientable word of genus $k$ in the infinitely countable
alphabet
$\mathcal{A}$ and $\Gamma _U$ its associated genus $k$
graph. We shall be thinking of edges of $\Gamma_U$ being labelled with
letters of $U$ and $U$ itself as being a regular Eulerian
circuit. Recall that $H=\langle X|R\rangle$ is a hyperbolic group in
which geodesic triangles in $\Gamma _X(H)$ are $\delta$-thin. We shall
now give a procedure which can be applied to $\Gamma_U$. This shall be
called an \emph{extension of the orientable word $U$ over $H$}. This
involves the following  three steps.
\begin{enumerate}
\item Let $e$ be a directed edge  in $\Gamma _U$ with end points $u=\iota(e)$
  and $v=\tau(e)$(note that $u$ may be the same vertex as $v$). We replace $e$ by
  two new edges $e_1$ and $e_2$ with labels in $\mathcal{A}^{\pm 1}$
  not in $U$ such that $\iota(e_1)=\iota(e_2)=u$ and $\tau(e_1)=\tau(e_2)=v$.
  We shall do this to every edge in
  $\Gamma_U$ and call the new graph $\Gamma _{U'}$ where $U'$ is the
  circuit which reads $e_1$ wherever $U$  reads $e$ in $\Gamma_U$ and $e_2^{-1}$
  wherever  $U$  reads $e^{-1}$. Let $v$ be  a
  vertex of $\Gamma_U$ of  degree
  $d$ and assume that the edges $e^1,\ldots ,e^d$ are oriented away
  from $v$. Since $v$ is regular, we can renumber the edges such
  that $U$ contains the cyclic subwords $(e^1)^{-1}e^2,\ldots
  ,(e^{d-1})^{-1}e^d,(e^d)^{-1}e^1$. Thus in $\Gamma_{U'}$ the circuit $U'$ contains the cyclic
  subwords  $(e_2^1)^{-1}e_1^2,\ldots
  ,(e_2^{d-1})^{-1}e_1^d,(e_2^d)^{-1}e_1^1$. See Figure \ref{extstep1}.
 \begin{figure}[ht]
\begin{center}
\psfrag{e1}{{\scriptsize $e^1$}}
\psfrag{e2}{{\scriptsize $e^2$}}
\psfrag{e11}{{\scriptsize $e_1^1$}}
\psfrag{e12}{{\scriptsize $e_2^1$}}
\psfrag{e21}{{\scriptsize $e_1^2$}}
\psfrag{e22}{{\scriptsize $e_2^2$}}
\psfrag{ed2}{{\scriptsize $e_2^d$}}
\psfrag{ed1}{{\scriptsize $e_1^d$}}
\psfrag{ed}{{\scriptsize $e^{d}$}}
\includegraphics[scale=0.6]{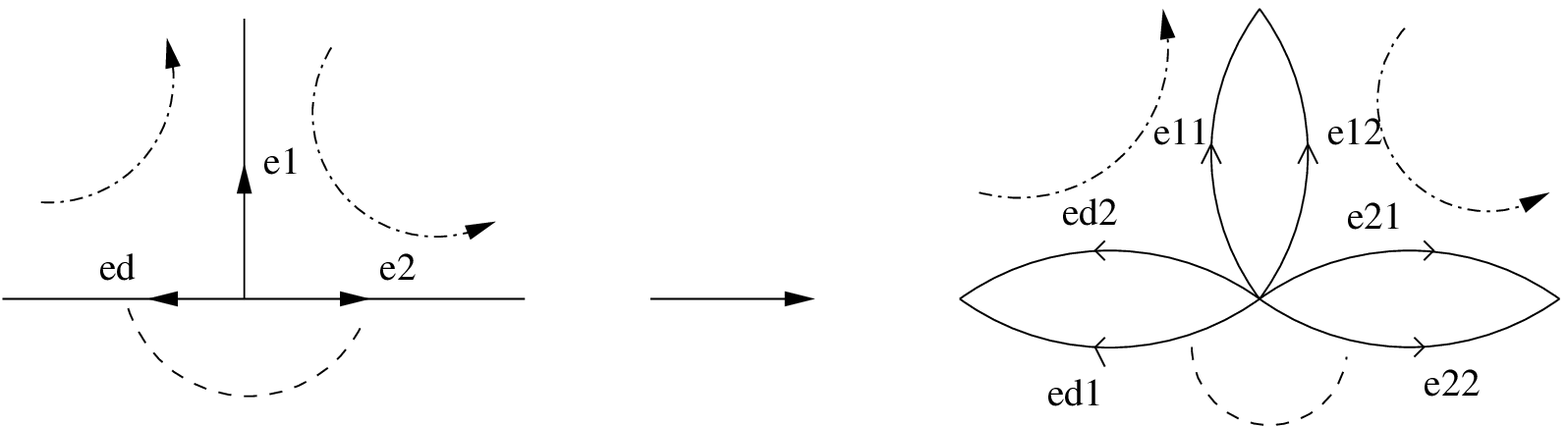}
\\
\vspace{0.38cm}                                                                  
\refstepcounter{figure}\label{extstep1}                                             
Figure \thefigure                  
\end{center}
\end{figure}
\item Now we extend each vertex of $\Gamma_{U'}$ by some cyclic word
  in $F(X)$. Let $v$ be a vertex of $\Gamma_{U'}$ of degree
  $d\geq 2$ and assume that the edges $e_i^1,\ldots ,e_i^d$, for $i=1,2$ are oriented
  away from $v$. Let $w$ be 
  a cyclic word in $F(X)$ such that $w=w_1\ldots w_d$. We have already
  shown that we can  
  renumber the edges of $\Gamma_{U'}$ such that $U'$ contains the cyclic subwords
   $(e_2^1)^{-1}e_1^2,\ldots
   ,(e_2^{d-1})^{-1}e_1^d,(e_2^d)^{-1}e_1^1$. We extend the vertex
   $v$ as follows. Remove the vertex $v$ from $\Gamma _{U'}$ and add
   $d$ distinct vertices $v_1,\ldots ,v_d$ such that
\begin{eqnarray*}
\iota(e_2^d)  = \iota(e_1^1) & = & v_1\\
\textrm{and}\qquad \iota(e_2^k)  =  \iota(e_1^{k+1}) & = & v_k,\qquad
\textrm{for }k=1,\ldots ,d-1.
\end{eqnarray*}
Now add an edge labelled by $w_j$ from $v_j$ to $v_{j+1}$ for all
$j=1,\ldots ,d$. 
 See Figure  \ref{extstep2}.
 \begin{figure}[ht]
\begin{center}
\psfrag{w1}{{\scriptsize $w_1$}}
\psfrag{w2}{{\scriptsize $w_2$}}
\psfrag{e11}{{\scriptsize $e_1^1$}}
\psfrag{e12}{{\scriptsize $e_2^1$}}
\psfrag{e21}{{\scriptsize $e_1^2$}}
\psfrag{e22}{{\scriptsize $e_2^2$}}
\psfrag{ed2}{{\scriptsize $e_2^d$}}
\psfrag{ed1}{{\scriptsize $e_1^d$}}
\psfrag{wd}{{\scriptsize $w_{d}$}}
\includegraphics[scale=0.6]{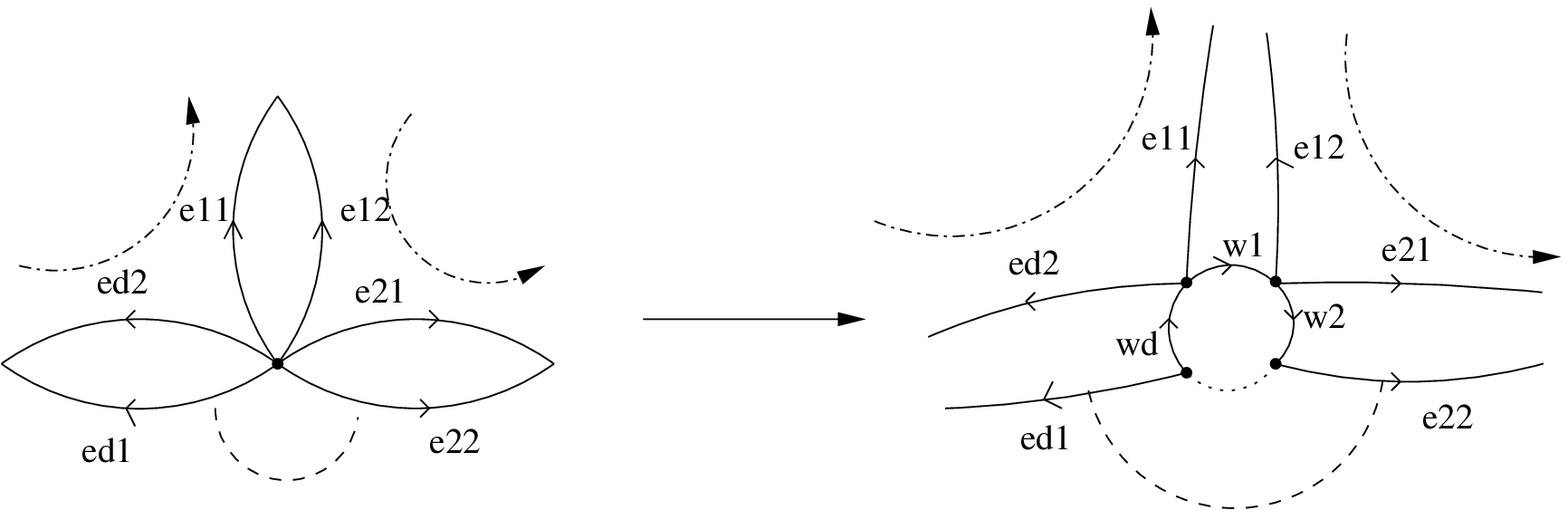}
\\
\vspace{0.38cm}                                                                  
\refstepcounter{figure}\label{extstep2}                                             
Figure \thefigure                  
\end{center}
\end{figure} 
If $v$ is a vertex of degree one then we add a
loop  with the label $w$ to $v$. 
See Figure \ref{extstep2b}. 
 \begin{figure}[ht]
\begin{center}
\psfrag{w}{{\scriptsize $w$}}
\psfrag{e11}{{\scriptsize $e_1^1$}}
\psfrag{e12}{{\scriptsize $e_2^1$}}
\includegraphics[scale=0.5]{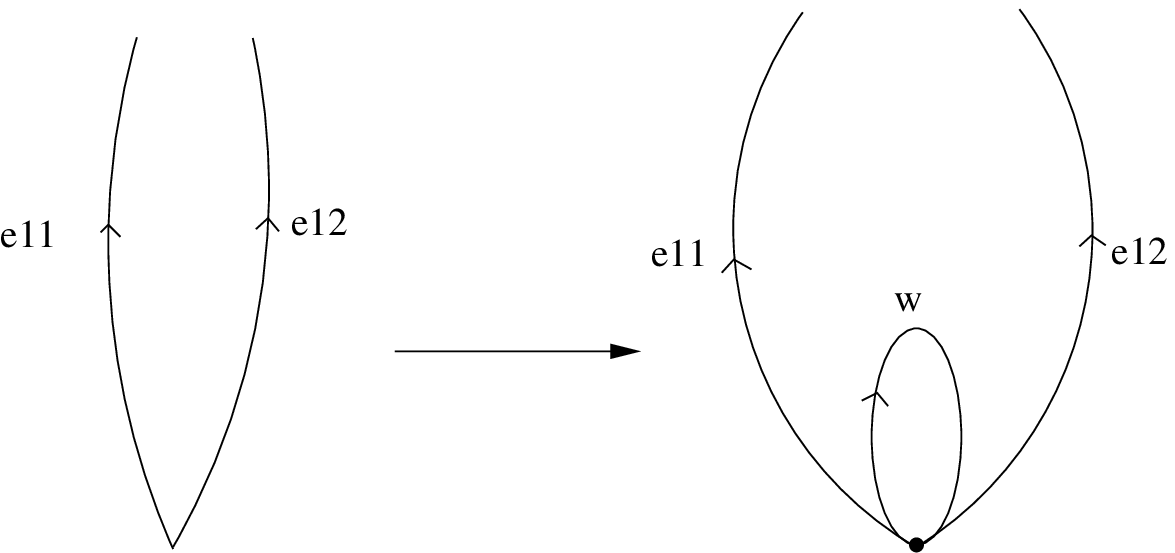}
\\
\vspace{0.38cm}                                                                  
\refstepcounter{figure}\label{extstep2b}                                             
Figure \thefigure                  
\end{center}
\end{figure} 
After each vertex has been extended by some cyclic word in $F(X)$, we
obtain a  new graph which we shall call $\Gamma_{U''}$.  We
can still read $U'$ in this graph. In fact it is now a Hamiltonian
cycle. We shall call $U'$ the \emph{Hamiltonian cycle associated with $U$}.
\item Consider a directed edge $e$ of the original graph $\Gamma _U$. In step
  1  this is
  replaced by a pair of edges $(e_1,e_2)$. Then in step 2 we extend the end
  points of these edges such that we have a  subgraph of $\Gamma
  _{U''}$ of the form shown in Figure \ref{extstep3}, 
 \begin{figure}[ht]
\begin{center}
\psfrag{x}{{\scriptsize $x$}}
\psfrag{y}{{\scriptsize $y$}}
\psfrag{e1}{{\scriptsize $e_1$}}
\psfrag{e2}{{\scriptsize $e_2$}}
\includegraphics[scale=0.5]{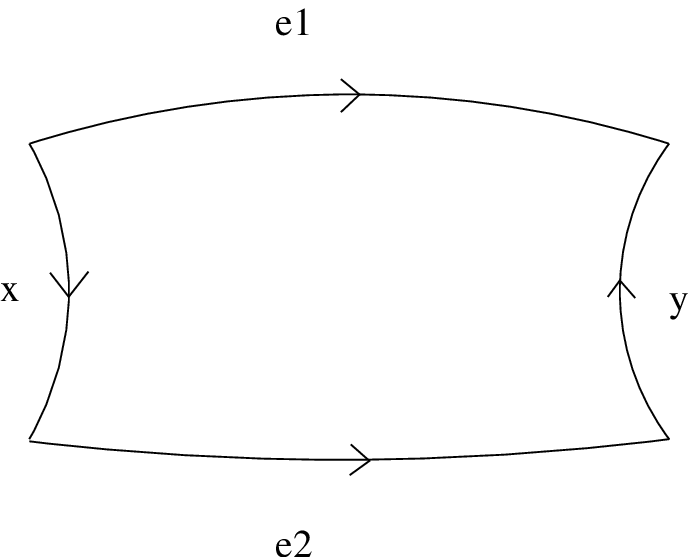}
\\
\vspace{0.38cm}                                                                  
\refstepcounter{figure}\label{extstep3}                                             
Figure \thefigure                  
\end{center}
\end{figure}   
where $x$ and $y$ are subwords of the cyclic words in $F(X)$ used in the
extension of the end points of $e$. We label the pair
$(e_1,e_2)$ with a pair of words $(h_1,h_2)$ in $F(X)$ which are
minimal in $H$ such that $h_1=_H xh_2y$. 
We do this to every pair of edges.  We then have an {\emph{extension
    of $U$ over the hyperbolic group $H$}}.
\end{enumerate}
\subsection{Genus and Length of an Extension of $U$ over $H$}
Suppose that we have an extension of an orientable word $U$ of genus
$k$. We define the {\emph{length of the extension}} to be the sum of
the lengths in $F(X)$ of the cyclic words $w$ given in step 2 of the extension. That
is, if $v_1,v_2,\ldots ,v_m $ are the vertices of $\Gamma _{U}$,  and these
are extended by cyclic words $w_1,w_2,\ldots ,w_m$ respectively, then the
length of the extension is  
\begin{equation*}
\sum_{i=1}^m|w_i|.
\end{equation*}
Let $v_1,\ldots ,v_t$ be a subset of the vertices of $\Gamma_U$ 
that  have been extended by cyclic words $w_1,\ldots ,w_t$. Then
we say that a {\emph{genus $g$ joint extension}} has been constructed
on these vertices if the genus of the $t$-tuple $(w_1,\ldots ,w_t)$ is
equal to $g-t+1$ in $H$. 

Now partition the vertices of $\Gamma_U$ into $p$ sets
$V_1,\ldots ,V_p$ such that a genus $g_i$ joint extension is
constructed on the vertices in the set $V_i$, for all $i=1,\ldots
,p$. We say that $U$ has had a {\emph{genus $g$ extension over $H$}} if
\begin{equation*}
\sum_{i=1}^pg_i=g
\end{equation*}
where $g_i\geq 1$ if $|V_i|=1$ and its only element is a vertex of degree one or
two (i.e.~if $g_i=0$ and hence $t_i=1$ then $V_i$'s only element is a
vertex of degree greater than or equal to $3$).
\subsection{Constructing genus $n$ forms in $H$}
We are now in a position to state the main result of this paper, a
method for constructing forms for elements of genus $n$ in $H$.
In the following theorem $M$ is the number of elements of $H$
represented by  words of length at most
$4\delta$ in $F(X)$ and $l=\delta(\log _2(12n-6)+1)$ where $n$ is given in
the hypothesis.
\begin{thm}\label{Thexp}
Let $h$ be a word in $X\cup X^{-1}$ such that the $genus_H(h)=n$. Then
$h$ is conjugate in $H$ to a word $F$ in $F(X)$ which is minimal in $H$ such that $F$ has one
of the following forms.
\begin{enumerate}
\item $|F|\leq (12n-6)(12l+M+4)$ and $F=_H \theta(W)$ where $W$ is a
  genus $n$ Wicks form and $\theta$ is a map from $F(\mathcal{A})$
  to $F(X)$ such that $|\theta(E)|\leq 12l+M+4$ for each letter $E$ of $W$.
\item $F$ is the label on the Hamiltonian cycle $U'$ obtained by a genus
  $g$  extension of length at most $2(12n-6)(12l+M+4)$ on some
  orientable word $U$ of genus $k$, where $n=g+k$.
\end{enumerate}
\end{thm}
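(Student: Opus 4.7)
The plan is to start from the defining equality $h^c =_H [x_1,y_1]\cdots[x_n,y_n]$ witnessing $genus_H(h)=n$, and build an orientable word $U$ of genus $n$ together with a map $\theta:F(\mathcal{A})\to F(X)$ sending the letters of $U$ to the words $x_i^{\pm 1}, y_i^{\pm 1}$, so that $\theta(U) =_H h$ up to conjugation. Geometrically this means viewing the commutator product as a $4n$-gon whose boundary spells $h$ and whose sides are identified in pairs according to $U$'s orientation scheme. After cyclically minimising $\theta(U)$ in $H$ we obtain the candidate $F$, and the task is to decide which of forms (1) or (2) it fits.

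The core technical work is controlling the edge-label lengths $|\theta(E)|$ using the $\delta$-thin condition. First I would cyclically rewrite $U$ and $\theta$ to eliminate free cancellations and redundancies; each such simplification either drops the genus of $U$ (absorbed later into extension genus $g$) or is absorbed into the map $\theta$. Next, passing to a geodesic polygon representation (every side labelled by a word minimal in $H$), I would iteratively apply the $\delta$-thin inequality to bisect long boundary arcs. Since a genus $n$ Wicks form has at most $12n-6$ directed letters, a binary divide-and-conquer of depth at most $\log_2(12n-6)+1$ suffices, and at each level the $\delta$-thin bound contributes a term of size $\delta$; summing gives a per-edge overhead of $O(l)$, which is precisely how the quantity $l=\delta(\log_2(12n-6)+1)$ enters. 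A pigeonhole argument using the count $M$ of elements represented by words of length at most $4\delta$ then allows one to shortcut any label exceeding the threshold $12l+M+4$: two close points along an overlong label must give the same group element, permitting a detour and isolating the excess as a cyclic loop around an endpoint.

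The dichotomy follows. If every surviving label $\theta(E)$ already satisfies $|\theta(E)|\leq 12l+M+4$, then the simplified orientable word is a genus $n$ Wicks form $W$ and
\begin{equation*}
|F|\leq |W|\cdot(12l+M+4)\leq(12n-6)(12l+M+4),
\end{equation*}
placing us in case (1). Otherwise, the excess cut off from long labels lives as non-trivial cyclic words $w_1,\ldots,w_m$ attached to the vertices of a residual orientable word of some genus $k<n$; these are precisely the data of step 2 of an extension. Partitioning the affected vertices into sets $V_1,\ldots,V_p$ according to which tuples $(w_{i_1},\ldots,w_{i_{t}})$ achieve a joint commutator decomposition, and recording $g_i-t_i+1$ for the genus of the joint extension on $V_i$, I would verify the accounting identity $k+\sum_i g_i = n$ by comparing the Euler characteristic of the $\Gamma_U$-based surface before and after vertex extensions. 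The Hamiltonian cycle $U'$ then reads precisely $F$ after step 3, with total extension length bounded by $2(12n-6)(12l+M+4)$ — the factor $2$ reflecting the pairing $(e_1,e_2)$ introduced in step 1.

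The main obstacle is the quantitative hyperbolic geometry: making the iterated bisection rigorous so each of the $12n-6$ edges accumulates only $O(l)$ thin-triangle overhead, and weaving the pigeonhole shortcut through $M$ into a clean procedure that always shifts excess length into vertex cyclic words rather than edge labels. The remainder is a careful but essentially combinatorial bookkeeping to confirm the genus identity $n=k+g$ and to check that the extracted extension data genuinely satisfies the joint-extension genus constraints laid out in Section 2, including the degree condition on singleton classes $V_i$ with $g_i=0$.
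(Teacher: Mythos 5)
Your outline follows the paper's skeleton (commutator decomposition gives a genus $n$ Wicks form mapped into $F(X)$; a logarithmic-depth subdivision of a polygon into $\delta$-thin triangles produces the quantity $l$; the dichotomy between ``all labels short'' and ``some label long'' gives forms 1 and 2; Euler-characteristic bookkeeping gives $n=k+g$). But there is a genuine gap at the heart of the quantitative step. The paper does not merely ``cyclically minimise $\theta(U)$'': it forms the set $\mathcal{F}$ of \emph{all} pairs $(W,\theta)$ with $W$ a genus $n$ Wicks form and $\theta(W)$ conjugate to $h$, and chooses a pair minimising $|\theta(W)|$ over this entire family. That global minimality is the engine of the whole argument. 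It is what proves Lemma \ref{l1} (no subword of an edge label can be shortened by rerouting through the rest of the word --- proved by surgery on $\Gamma_W$ producing a competing Wicks form $W'$ with a new labelling $\psi$, which minimality forbids from being shorter), and it is invoked again inside Lemma \ref{lsc} to apply the Grigorchuk--Lysionok conjugacy bound (Lemma \ref{lb}, which is where $M$ actually enters --- not via a direct pigeonhole shortcut as you describe) and derive a contradiction when a long edge stays close to its own inverse. Without fixing a minimal pair over $\mathcal{F}$, the claim that each long edge's endpoint projects to within $5l+M+3$ of the geodesic $F$, and hence the whole passage to form 2, does not go through.

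The second problem is structural: you describe form 2 as arising from ``excess cut off from long labels'' living as cyclic words at the vertices. In the paper it is the other way around. The long edges survive as the edge pairs $(E_1,E_2)$ of the extension, with $E_1,E_2$ read off as segments of the geodesic $F$ between projections of long-edge endpoints; the vertex cyclic words $z^i_j$ are assembled from the \emph{short} edges (grouped around the boundary components of the surface $S$ obtained by identifying only the long edges) together with the connector paths of length at most $5l+M+3$. This matters for three things you would otherwise be unable to verify: the length bound $2(12n-6)(12l+M+4)$, which is computed as $(12l+M+4)\mathcal{S}+2(5l+M+3)\mathcal{L}$ with $\mathcal{S}+\mathcal{L}\le 12n-6$; the degree condition on singleton classes with $g_i=0$ (Lemma \ref{C_i}), which uses irredundancy of the Wicks form applied to the short-edge boundary words; and the genus identity $n=k+g$, which is not purely an Euler-characteristic computation but needs the observation that if the $H$-genus of any tuple of boundary words were strictly less than its free genus, one could reassemble a decomposition of $h$ of genus less than $n$, contradicting $genus_H(h)=n$.
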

The proof of this theorem runs from page \pageref{startthm} to page
\pageref{endthm}. It involves considering a genus $n$ Wicks form $W$ which can
be mapped to a word in $F(X)$ conjugate to $h$ in $H$ such that 
$W$ has the property of being the shortest length Wicks
form in $F(X)$ which is conjugate to $h$ in $H$. From page
\pageref{l1} to page \pageref{edw} we state and prove a number of
preliminary lemmas involving the Wicks form $W$ as well as showing
that $W$ either takes form 1 in our theorem or can be
reduced to an orientable word $U$ (obtained by setting certain
letters of $W$ to $1$). Then from page \pageref{stext} to page
\pageref{endthm} we will show that if we don't have form 1 then  $h$ is conjugate to a
form given by an
extension of $U$ over $H$ of bounded length and known genus, that is
it takes form 2 in our theorem. Below we give an example of how to
construct one of these genus $n$ forms from an extension.
\begin{example}{\bf{(A possible form of an element of genus $3$ in $H$)}}\\
Let $U=ABCC^{-1}B^{-1}A^{-1}$, an orientable word of genus $0$. Let
$u_1,u_2,v_1,v_2$ be the vertices of the associated graph
$\Gamma_U$. We shall do a joint genus $2$ extension on the vertices
$u_1$ and $u_2$, by words $w_1$ and $w_2$ respectively, and a joint genus $1$
extension on the vertices
$v_1$and $v_2$, by words $z_1$ and $z_2$ respectively. See Figure \ref{extex}.
\begin{figure}[ht]
\begin{center}
\psfrag{A1}{{\scriptsize $A_1$}}
\psfrag{A2}{{\scriptsize $A_2$}}
\psfrag{B1}{{\scriptsize $B_1$}}
\psfrag{B2}{{\scriptsize $B_2$}}
\psfrag{C1}{{\scriptsize $C_1$}}
\psfrag{C2}{{\scriptsize $C_2$}}
\psfrag{A}{{\scriptsize $A$}}
\psfrag{B}{{\scriptsize $B$}}
\psfrag{C}{{\scriptsize $C$}}
\psfrag{u1}{{\scriptsize $u_1$}}
\psfrag{u2}{{\scriptsize $u_2$}}
\psfrag{v1}{{\scriptsize $v_1$}}
\psfrag{v2}{{\scriptsize $v_2$}}
\psfrag{w1}{{\scriptsize $w_1$}}
\psfrag{w21}{{\scriptsize $w_{21}$}}
\psfrag{w22}{{\scriptsize $w_{22}$}}
\psfrag{z11}{{\scriptsize $z_{11}$}}
\psfrag{z12}{{\scriptsize $z_{12}$}}
\psfrag{z2}{{\scriptsize $z_2$}}
\includegraphics[scale=0.6]{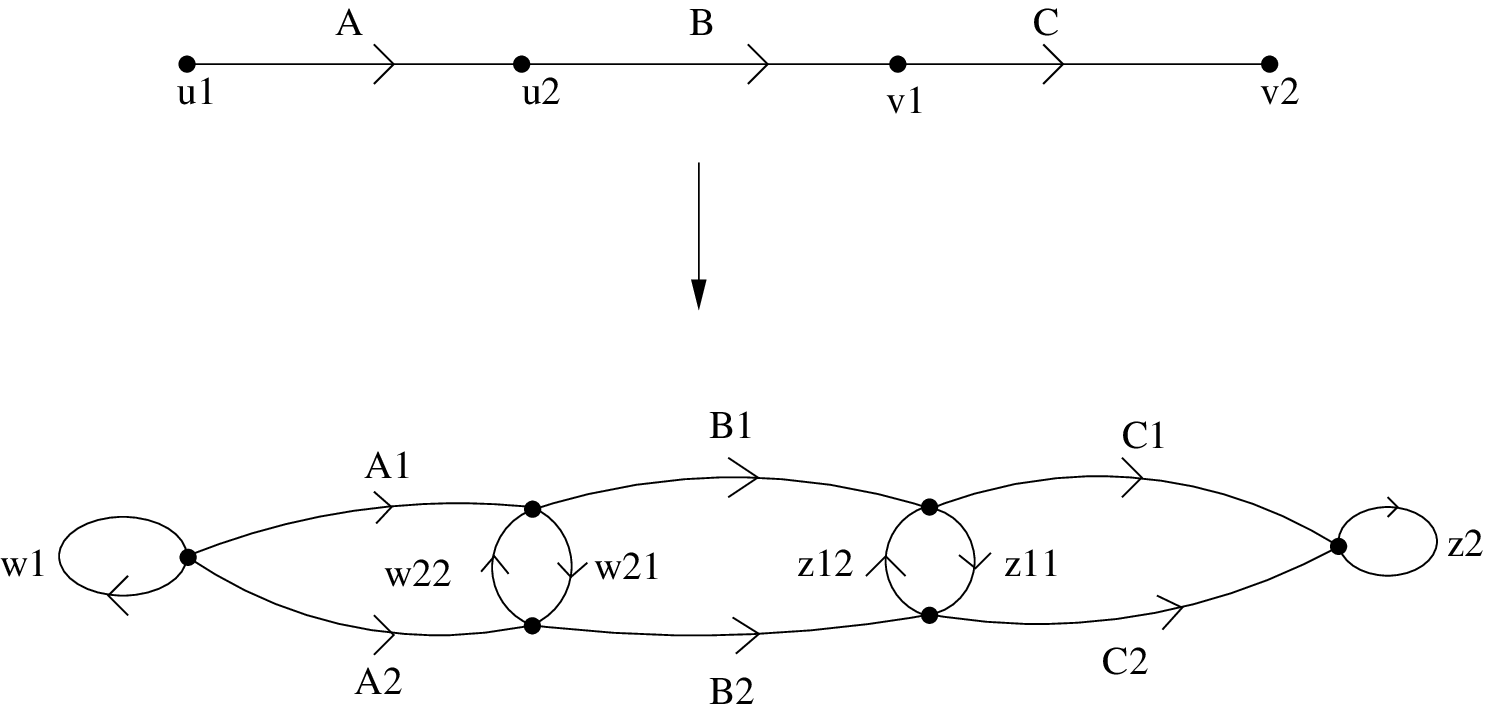}
\\
\vspace{0.38cm}                                                                  
\refstepcounter{figure}\label{extex}                                             
Figure \thefigure                  
\end{center}
\end{figure}
Here $w_2=w_{21}w_{22}$ and $z_1=z_{11}z_{12}$.
Thus, by Theorem \ref{Thexp}, a possible form for an element of genus
$3$ is $F=A_1B_1C_1C_2^{-1}B_2^{-1}A_2^{-1}$ where $F$ is a word in
$F(X)$ which is minimal in $H$ and  
\begin{eqnarray*}
A_1 & =_H & w_1^{-1}A_2w_{22},\\
B_1 & =_H & w_{21}B_2z_{12}^{-1},\\
C_1 & =_H & z_{11}C_2z_2,
\end{eqnarray*} 
$w_1w_2$ is a commutator in $H$, $z_1z_2=1$ and
$|w_1+w_2+z_1+z_2|\leq 60(12l+M+4)$ .
\end{example}
\section{Preliminary Results}\label{prelim}
Let $H=\langle X|R\rangle$ be a finitely generated
hyperbolic group. Then the following lemma by R.~I.~Grigorchuk and
I.~G.~Lysionok  in \cite{grig} shows that the conjugacy problem is
solvable $H$. 
\begin{lemma}[\cite{grig}]\label{lb}
If minimal words $h_1$ and $h_2$ are conjugate in $H$, then a word $w$ can
be found such that $h_1=_Hwh_2w^{-1}$ and 
\begin{equation*}
|w|\leq \frac{1}{2}(|h_1|+|h_2|)+M+1,
\end{equation*}
where $M$ is the number of elements of $H$ represented by words of
length $\leq 4\delta $.
\end{lemma}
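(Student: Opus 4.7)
The plan is to argue by contradiction. I would choose $w$ of minimal length among all conjugators of $h_2$ to $h_1$ in $H$, so that $h_1=_H wh_2w^{-1}$ with $|w|=|w|_H$ as small as possible, and assume $|w|>\tfrac{1}{2}(|h_1|+|h_2|)+M+1$. The aim is to exhibit a strictly shorter conjugator, contradicting minimality. The construction exploits the $\delta$-thin geometry of the geodesic quadrilateral in $\Gamma_X(H)$ that realises the relation $h_1w=wh_2$.

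First I would form the quadrilateral with vertices $A=1$, $B=w$, $C=wh_2=h_1w$, $D=h_1$ and sides spelled by the minimal words $w,h_2,w^{-1},h_1^{-1}$, and split it into the two geodesic triangles $\triangle ABC$ and $\triangle ACD$ by the diagonal $AC$ of $H$-length $\ell:=|wh_2|_H$. Applying the $\delta$-thin triangle inequality stated in the preamble to each triangle, a prefix $p_i$ of $w$ of length $i\le\tfrac12(|w|+\ell-|h_2|)$ (a point on $AB$) is within $\delta$ of the point on $AC$ at the same distance from $A$, and symmetrically a point on $CD$ at distance $j\le\tfrac12(\ell+|w|-|h_1|)$ from $C$ is within $\delta$ of the point on $AC$ at distance $\ell-j$ from $A$. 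For each integer $i$ in the overlap range $[\tfrac12(\ell+|h_1|-|w|),\,\tfrac12(|w|+\ell-|h_2|)]$, the prefix $p_i$ on the bottom $w$-side and the point $h_1p_{i'}$ on the top $w^{-1}$-side (with $i':=i+|w|-\ell$) are therefore both $\delta$-close to the same point of $AC$ and so within $2\delta$ of each other in the $H$-metric; a direct Gromov-product calculation shows this range contains at least $|w|-\tfrac12(|h_1|+|h_2|)>M+1$ integers.

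Next I would pigeonhole. For each valid $i$ the shortcut element $c_i:=p_i^{-1}h_1p_{i'}\in H$ is represented by a word of length $\le 4\delta$, so by the definition of $M$ at most $M$ distinct values occur. Hence there exist $i_1<i_2$ with $c_{i_1}=c_{i_2}=:c$, yielding the paired equations $h_1p_{i_1'}=p_{i_1}c$ and $h_1p_{i_2'}=p_{i_2}c$. Eliminating $h_1$ gives the relation $sc=cs'$ in $H$, where $s=w_{i_1+1}\cdots w_{i_2}$ and $s'=w_{i_1'+1}\cdots w_{i_2'}$ are the corresponding equal-length segments of $w$; equivalently, the single element $wh_2$ of $H$ admits two representatives $p_{i_1}cq_{i_1'}=p_{i_2}cq_{i_2'}$ of the common length $\ell+|c|\le\ell+2\delta$.

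The main obstacle is converting this pigeonhole coincidence into a strictly shorter conjugator. My approach is to splice the two decompositions: using $sc=cs'$ together with $wh_2=h_1w$ and the identity $h_1p_{i_1'}=p_{i_1}c$, I plan to verify that a word of the form $w_{\mathrm{new}}=p_{i_1}\cdot c\cdot q_{i_2'}$ (or a close variant absorbing the ``drift'' segment $p_{i_1'}^{-1}p_{i_1}$) again satisfies $w_{\mathrm{new}}h_2=_H h_1w_{\mathrm{new}}$ and therefore conjugates $h_2$ to $h_1$; its length is bounded by $\ell-(i_2-i_1)+O(\delta)$. Since the ``$+1$'' in the hypothesis is precisely what allows the pigeonhole pair $i_1<i_2$ to be chosen with $i_2-i_1$ large enough to dominate the $O(\delta)$ cost of the shortcut and any residual drift, this produces a conjugator strictly shorter than $w$, contradicting the minimality of $w$ and establishing the stated bound.
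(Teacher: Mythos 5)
The paper offers no proof of this lemma to compare against: it is imported verbatim from Grigorchuk--Lysionok \cite{grig}. Judged on its own terms, your outline follows the standard strategy for such conjugacy bounds (shortest conjugator, thin quadrilateral cut along a diagonal, pigeonhole on $\leq M$ short connecting elements, splice out a middle segment), and everything up to and including the pigeonhole is essentially sound.

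The genuine gap is exactly the point you flag as ``the main obstacle'': the drift $D=|w|-\ell$ is never controlled, and without controlling it the splice fails. Writing $q_j$ for the suffix with $w=p_jq_j$, the coincidence $c_{i_1}=c_{i_2}=c$ gives $h_1=p_{i_1}cp_{i_1'}^{-1}=p_{i_2}cp_{i_2'}^{-1}$ and $h_2=q_{i_2}^{-1}cq_{i_2'}$, so the natural spliced word is $v=p_{i_1}q_{i_2}$, and $vh_2v^{-1}=h_1$ then requires $p_{i_1}q_{i_2}=_Hp_{i_1'}q_{i_2'}$; but substituting $p_{i_1'}=h_1^{-1}p_{i_1}c$ and $p_{i_2'}^{-1}=c^{-1}p_{i_2}^{-1}h_1$ shows $p_{i_1'}q_{i_2'}=_Hh_1^{-1}(p_{i_1}q_{i_2})h_2$, so the required identity is \emph{equivalent} to $p_{i_1}q_{i_2}$ being a conjugator --- the verification is circular. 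Your explicit candidate $p_{i_1}cq_{i_2'}$ fares no better: it equals $p_{i_1}p_{i_2}^{-1}h_1w$ in $H$, which conjugates $h_2$ to $h_1$ only if $h_1$ commutes with $p_{i_1}p_{i_2}^{-1}$. Moreover its length is $\ell-(i_2-i_1)+O(\delta)$, which is less than $|w|$ only when $i_2-i_1>|w|-\ell+O(\delta)$, and the pigeonhole gives you no control over $i_2-i_1$ (with only $M+1$ indices the coinciding pair may well be adjacent; you cannot ``choose'' the gap large). The missing idea is to invoke minimality of $w$ a second time to kill the drift: $h_1w$ and $h_1^{-1}w$ are themselves conjugators of $h_2$ to $h_1$, so $|h_1^{\pm1}w|_H\geq|w|$, and combining this with thinness applied to \emph{both} diagonals of the quadrilateral forces $|D|\leq O(\delta)$. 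One then pigeonholes on the drift-free elements $p_i^{-1}h_1p_i$, which have length at most $4\delta$ on the middle range --- this is precisely why $M$ counts elements of length $\leq 4\delta$ rather than $2\delta$ --- and the splice $p_{i_1}(p_{i_2}^{-1}w)$ is immediately a strictly shorter conjugator, completing the contradiction.
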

The following lemma can be found in K.J.~Friel \cite{friel}, Lemma
$2.2.4$. A proof has been provided, to help understand Lemma  \ref{lac} which
follows it. 

\begin{lemma}\label{la}
Suppose that we have a closed path  $q=\gamma _0\gamma _1\ldots\gamma
_n$ in $\Gamma_X(H)$, where $\gamma _i$ is a geodesic path, for $i=0,\ldots , n$.
Let vertices $\zeta _1$ and $\zeta _2$ lie on  $\gamma
_0-\{\iota(\gamma_0),\tau(\gamma_0)\}$  such that
$d(\iota(\gamma _0),\zeta _1)<d(\iota(\gamma _0),\zeta _2)$. Then
vertices $\eta _1,\eta_2\notin \gamma _0$  can
be found on $q$  such that
\begin{enumerate}
\item $d(\zeta _1 ,\eta _1),d(\zeta _2 ,\eta _2)\leq \delta (\log _2(n)+1)$;

\item  $d_q(\iota(\gamma _0),\eta _1)>d_q(\iota (\gamma _0), \eta _2)$ and
\item if $\eta _1$, $\eta _2\in\gamma _i$ for some $i \neq 0$ then
$d(\eta _1,\eta _2)=d(\zeta _1,\zeta _2)$.   
\end{enumerate} 
\end{lemma}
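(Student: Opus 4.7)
The plan is to prove this by induction on $n$, using the $\delta$-thin triangle condition iteratively along a balanced triangulation of the closed path $q$. The base case $n = 2$ reduces to a single geodesic triangle $\gamma_0\gamma_1\gamma_2$: here the $\delta$-thin condition yields a tripod map from $\gamma_0$ into $\gamma_1 \cup \gamma_2$, and the image of $\zeta_i$ under this map serves as $\eta_i$. The tripod map is orientation-reversing on its image relative to the traversal of $q$, which yields (2), and it is an isometry on each piece of $\gamma_0$ that is mapped entirely into a single other side, which yields (3).

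For the inductive step, I would triangulate the geodesic $(n+1)$-gon bounded by $q$ into $n - 1$ triangles via $n - 2$ geodesic diagonals, chosen so that the dual tree of the triangulation is balanced, i.e.~every triangle lies at depth at most $\lceil \log_2 n \rceil$ from the triangle containing $\gamma_0$. Starting from $\zeta_i$ and iteratively applying the $\delta$-thin triangle condition to pass from one triangle to an adjacent one along the dual tree, each step either moves the tracking point by at most $\delta$ to a point on the next diagonal, or lands on an original side $\gamma_j$ with $j \geq 1$ and terminates the iteration. Because the dual tree has depth at most $\lceil \log_2 n \rceil$ and the final step contributes one more application of $\delta$-thinness inside the terminal triangle, the total displacement from $\zeta_i$ to $\eta_i$ is bounded by $\delta(\log_2(n) + 1)$, which is (1).

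Condition (2) propagates through the recursion because each $\delta$-thin triangle step respects the cyclic order of the sides of the triangle it is applied in, and the cyclic order of $q$ places $\zeta_1$ before $\zeta_2$ along $\gamma_0$ while placing their images on the complementary arc in the reversed order. Condition (3) is the most delicate point: if the two trackings both terminate on the same side $\gamma_i$, the sequence of triangles traversed by $\zeta_1$ and $\zeta_2$ must agree at every stage in which they occupy the same triangle, so that the composed tripod maps form a single isometry from a sub-interval of $\gamma_0$ onto a sub-interval of $\gamma_i$, forcing $d(\eta_1, \eta_2) = d(\zeta_1, \zeta_2)$. The main obstacle I anticipate is arranging the recursion so as to enforce this consistency in all cases; I would handle it by running the two trackings jointly, choosing the same diagonal at every level at which $\zeta_1$ and $\zeta_2$ still lie in a common sub-polygon, and only splitting the routings once the two points commit to different branches of the triangulation tree.
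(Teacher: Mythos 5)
Your proposal is essentially the paper's own argument: the paper pads $n$ to a power of $2$ and builds exactly such a balanced triangulation explicitly (the dyadic family of geodesics $q_b$ indexed by binary strings), then chains the $\delta$-thin condition through the depth-$k$ dual tree to get the $\delta(\log_2(n)+1)$ bound, and establishes (2) and (3) by tracking whether the two chains split at some triangle or follow the same branch to a common leaf $\gamma_i$. Your inductive packaging and joint routing of the two trackings are only cosmetic differences from that construction, so the approach matches.
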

\begin{proof}
Now for some integer $k$ it follows that $\log _2n\leq k \leq \log _2
(n) +1$. Therefore by adding paths of length zero between $\tau(\gamma_n)$ and
$\iota(\gamma_0)$, we may assume that $n=2^k$ and replace the bound 
\begin{equation*}
 d(\zeta _1 ,\eta _1),d(\zeta _2 ,\eta _2)\leq \delta (\log _2(n)+1)
\end{equation*}
in part $1$ by the bound
\begin{equation*}
 d(\zeta _1 ,\eta _1),d(\zeta _2 ,\eta _2)\leq \delta k.
\end{equation*}
We now carry out a subdivision on the closed path $q$. Let $q_0$
and $q_1$ be  geodesic paths from $\tau(\gamma
_0)$ and  $\tau(\gamma_{2^{k-1}})$ to  $\tau(\gamma_{2^{k-1}})$ and $\iota(\gamma _0)$ 
respectively. Let $b=b_1\ldots b_m$ be a binary sequence of length
$m\leq k-1$, where $b_i=0$ or $1$ for all $i=1,\ldots ,m$. We define $q_{b0}$ and
$q_{b1}$ to be  geodesic paths from $\iota(q_b)$ and
$\tau(\gamma_{r(b)})$ to $\tau(\gamma_{r(b)})$ and $\tau(q_b)$ respectively, where 
\begin{equation*}
r(b)=b_12^{k-1}+b_22^{k-2}+\ldots +b_m2^{k-m}+2^{k-(m+1)}. 
\end{equation*}
Note that, if $m=k-1$ then we choose the geodesic paths $q_{b0}$ and
$q_{b1}$ to be $\gamma_{r(b)}$ and $\gamma_{r(b)+1}$ respectively.
This gives a subdivision of $q$ into geodesic triangles.  
For example if $k=3$ we have the subdivision shown in Figure \ref{subdiv}.
\begin{figure}[ht]
\begin{center}
\psfrag{q0}{{\scriptsize{$q_0$}}}
\psfrag{q1}{{\scriptsize{$q_1$}}}
\psfrag{q00}{{\scriptsize{$q_{00}$}}}
\psfrag{q01}{{\scriptsize{$q_{01}$}}}
\psfrag{q10}{{\scriptsize{$q_{10}$}}}
\psfrag{q11}{{\scriptsize{$q_{11}$}}}
\psfrag{g0}{{\scriptsize{$\gamma _0$}}}  
\psfrag{g1}{{\scriptsize{$\gamma _1=q_{000}$}}}    
\psfrag{g2}{{\scriptsize{$\gamma _2=q_{001}$}}}  
\psfrag{g3}{{\scriptsize{$\gamma _3=q_{010}$}}}    
\psfrag{g4}{{\scriptsize{$\gamma _4=q_{011}$}}}  
\psfrag{g5}{{\scriptsize{$\gamma _5=q_{100}$}}}    
\psfrag{g6}{{\scriptsize{$\gamma _6=q_{101}$}}}  
\psfrag{g7}{{\scriptsize{$\gamma _7=q_{110}$}}}    
\psfrag{g8}{{\scriptsize{$\gamma _8=q_{111}$}}}  
\includegraphics[scale=0.5]{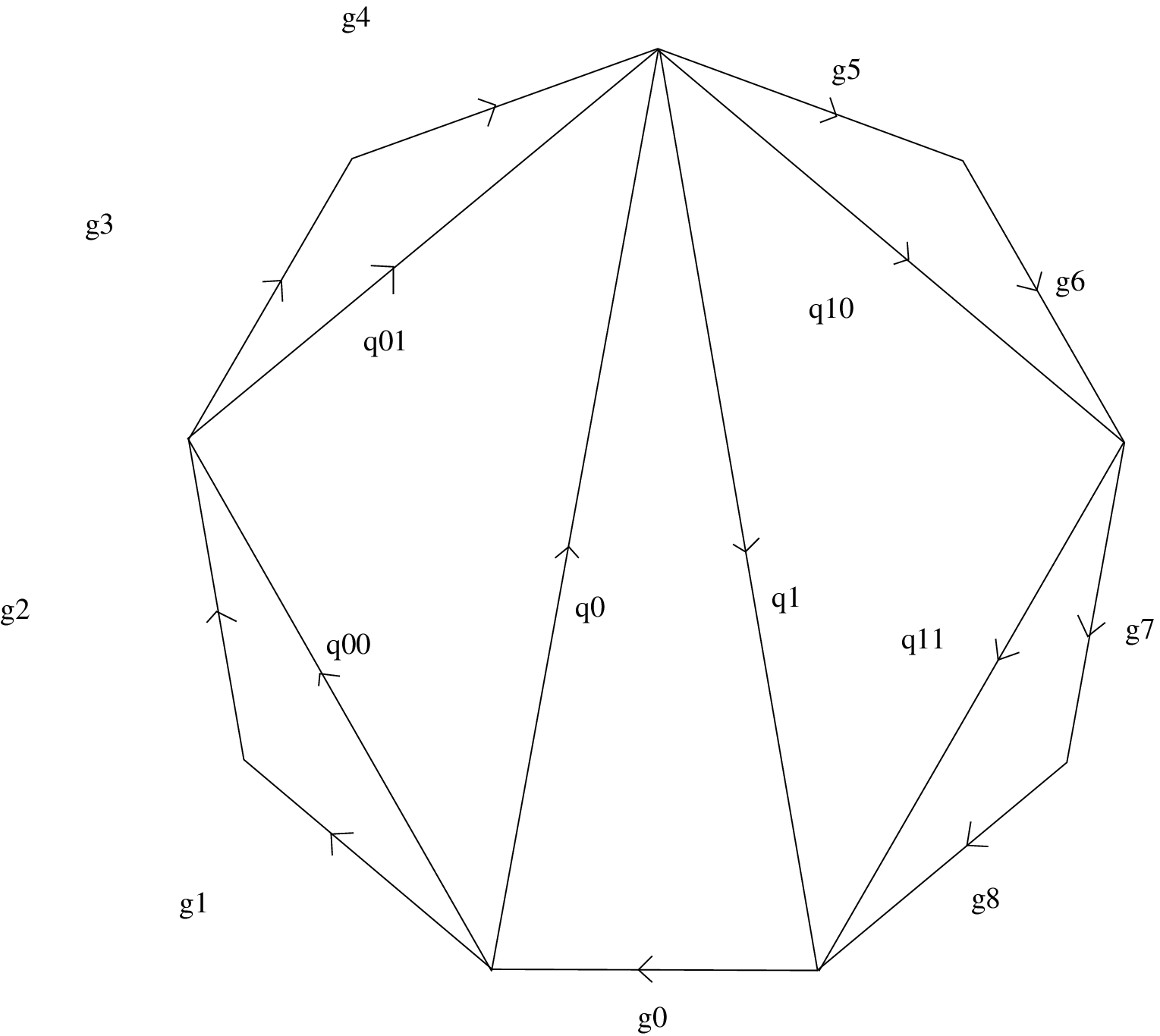}
\\
\vspace{0.38cm}                                                                  
\refstepcounter{figure}\label{subdiv}                                             
Figure \thefigure                  
\end{center}
\end{figure}

Consider a geodesic triangle of the subdivision with sides $q_b$, $q_{b0}$ and $q_{b1}$. Let 
$v$ be a vertex lying on $q_b$. By the definition of $\delta$-thin
triangles there exists a vertex $v'$ on $q_{b0}\cup q_{b1}$ such that
$d(v,v')\leq  \delta$ and
either
\begin{enumerate}
\item[(i)] $v'\in q_{b0}$ and $d(\iota(q_{b}),v)=d(\iota(q_{b0}),v')$,
  or
\item[(ii)] $v'\in q_{b1}$ and $d(\tau(q_{b}),v)=d(\tau(q_{b1}),v')$.
\end{enumerate}
From this we  see that for any vertex on a geodesic path $q_b$
there always exists a vertex on a geodesic path $q_{b'}$ where the
length of the binary sequence $b'$ is one greater than $b$.

Now $\zeta _1\in \gamma _0$ is within $\delta $ of some vertex $v _1$  on either
$q_0$ or $q_1$. From the above there exists a sequence of vertices
$v_1,\ldots ,v_k$, where each $v_i$ lies on a $q_{b(v_i)}$ such that the length of the
binary sequence $b(v_{i+1})$ is one greater than the length of the
binary sequence $b(v_{i})$ and $d(v_i,v_{i+1})\leq \delta$ for all
$i=1,\ldots ,k-1$. This implies that $d(\zeta _1,v_k)\leq \delta k$ and
from the construction of the subdivision $q_{b(v_{k})}=\gamma _j$ for
  some $j=1,\ldots ,n$. Therefore, let $v_k=\eta_1$. We shall denote this path
  passing through the sequence of vertices by $s_1$,
  i.e.~$|s_1|\leq\delta k$.  For example if
  $k=3$ we have a path as shown in figure
  \ref{subdiv2}.
\begin{figure}[ht]
\begin{center}
\psfrag{s}{{\scriptsize{$s_1$}}}
\psfrag{z1}{{\scriptsize{$\zeta _1$}}}
\psfrag{e1}{{\scriptsize{$\eta_1$}}}
\psfrag{q0}{{\scriptsize{$q_0$}}}
\psfrag{q1}{{\scriptsize{$q_1$}}}
\psfrag{q00}{{\scriptsize{$q_{00}$}}}
\psfrag{q01}{{\scriptsize{$q_{01}$}}}
\psfrag{q10}{{\scriptsize{$q_{10}$}}}
\psfrag{q11}{{\scriptsize{$q_{11}$}}}
\psfrag{g0}{{\scriptsize{$\gamma _0$}}}  
\psfrag{g1}{{\scriptsize{$\gamma _1=q_{000}$}}}    
\psfrag{g2}{{\scriptsize{$\gamma _2=q_{001}$}}}  
\psfrag{g3}{{\scriptsize{$\gamma _3=q_{010}$}}}    
\psfrag{g4}{{\scriptsize{$\gamma _4=q_{011}$}}}  
\psfrag{g5}{{\scriptsize{$\gamma _5=q_{100}$}}}    
\psfrag{g6}{{\scriptsize{$\gamma _6=q_{101}$}}}  
\psfrag{g7}{{\scriptsize{$\gamma _7=q_{110}$}}}    
\psfrag{g8}{{\scriptsize{$\gamma _8=q_{111}$}}}  
\includegraphics[scale=0.5]{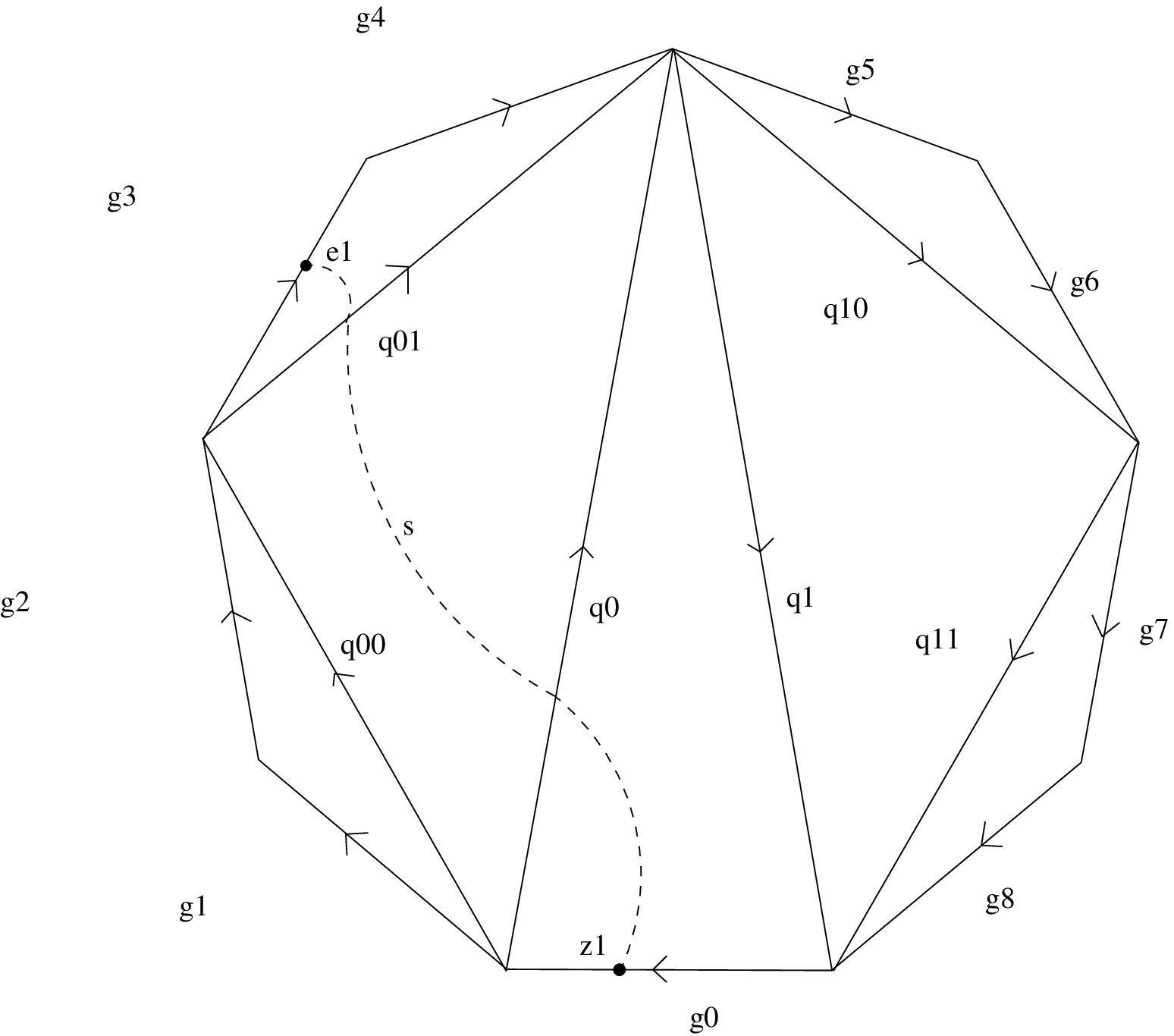}
\\
\vspace{0.38cm}                                                                  
\refstepcounter{figure}\label{subdiv2}                                             
Figure \thefigure                  
\end{center}
\end{figure}
Clearly the same construction of a path $s_2$ of length at most
$\delta k$ applies to $\zeta_2$.
Hence part $1$ of the lemma holds.

Consider two vertices $u_1$ and $u_2$ lying on the geodesic $q_b$ for some binary
sequence $b$ such that $d(\iota(q_b),u_2)=d(\iota(q_b),u_1)+B$, for
some positive constant $B$. Then by the definition of $\delta$-thin
triangle there exist vertices $u'_1, u_2'$ lying on $q_{b0}\cup
q_{b1}$ such that $d(u_1,u_1'),d(u_2,u_2')\leq \delta$ and either
\begin{enumerate}
\item [(a)]$u_1',u_2'\in q_{b0}$ and $d(\iota(q_{b0}),u_2')=
  d(\iota(q_{b0}),u_1')+B$, or
\item [(b)] $u_1',u_2'\in q_{b1}$ and $d(\tau(q_{b1}),u_2')=
  d(\tau(q_{b0}),u_1')-B$, or
\item [(c)] $u_2'\in q_{b0}$ and $u_1'\in q_{b1}$.
\end{enumerate}
Now suppose that  paths $s_1$ and $s_2$ follow the same sequence of $q_b$'s, until
for some $q_{b'}$, $s_2$ meets $q_{b'0}$ and $s_1$ meets
$q_{b'1}$. Let $s_1$ and $s_2$ meet the geodesic path $q_{b'}$ at the vertices $x_1$ and
$x_2$ respectively. By (a) and (b) it follows that
$d(\zeta_1,\zeta_2)=d(x_1,x_2)$ and
$d(\iota(q_{b'}),x_2)<d(\iota(q_{b'}),x_1)$. Since  $s_2$ meets $q_{b'0}$ and $s_1$ meets
$q_{b'1}$, the construction of the $q_b's$ implies that
$d_q(\iota(\gamma _0),\eta _1)>d_q(\iota (\gamma _0), \eta _2)$. 

Finally, suppose that $s_1$ and $s_2$ meet exactly the same sequence of $q_b's$. Let
 $q_{b'}$ be the last in this sequence. Again  by our construction of the
$q_b's$, $q_{b'}=\gamma _i$ for some $i=1,\ldots ,n$ and by (a) and (b) it follows that
$d(\zeta_1,\zeta_2)=d(\eta_1,\eta_2)$ and $d_q(\iota(\gamma _0),\eta
_1)>d_q(\iota (\gamma _0), \eta _2)$. Hence both part $2$ and $3$ hold.
\end{proof}
\begin{lemma}\label{lac}
Given $\zeta_1$ and $\eta_1$ from above, if $\zeta_3$ is vertex lying
on $q$ such that 
\begin{equation*}
d_q(\iota(\gamma_0),\zeta_1)<d_q(\iota(\gamma _0),\zeta
_3)<d_q(\iota(\gamma_0),\eta _1)
\end{equation*}
and $\zeta _3\neq \iota(\gamma_i)$ or $\tau(\gamma_i)$ for some
$i=1,\ldots ,n$ then a vertex $\eta _3\neq \zeta_3$ can be found on $q$ such that 
\begin{enumerate}
\item $d(\zeta _3,\eta_3)\leq 2\delta (\log _2(n) +1)$ and
\item $d_q(\iota(\gamma_0),\eta_1)>d_q(\iota(\gamma_0),\eta_3)$.
\end{enumerate}
\end{lemma}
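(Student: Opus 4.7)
The bound $2\delta(\log_2(n)+1)$ is exactly twice that of Lemma \ref{la}, which strongly suggests a two-stage reduction to Lemma \ref{la} itself. My plan is to first produce, within $\delta(\log_2(n)+1)$ of $\zeta_3$, a vertex $\zeta_3^{\prime}$ lying on $\gamma_0$, and then invoke Lemma \ref{la} with the pair $(\zeta_1,\zeta_3^{\prime})$, using the same binary subdivision of $q$ that produced $\eta_1$. The triangle inequality then yields $d(\zeta_3,\eta_3)\leq 2\delta(\log_2(n)+1)$, and the ordering $d_q(\iota(\gamma_0),\eta_1)>d_q(\iota(\gamma_0),\eta_3)$ is inherited from the corresponding conclusion of Lemma \ref{la}, together with the fact that the $\eta_1$ constructed there coincides with the given one.

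The case $\zeta_3\in\gamma_0$ is immediate: the hypotheses force $\zeta_3$ to lie strictly in the interior of $\gamma_0$ with $d(\iota(\gamma_0),\zeta_1)<d(\iota(\gamma_0),\zeta_3)$, so Lemma \ref{la} applies directly with $\zeta_3^{\prime}=\zeta_3$ and gives the stronger bound $\delta(\log_2(n)+1)$. The substantive case is $\zeta_3\in\gamma_i$ for some $i\geq 1$, where $\zeta_3$ lies in the interior of $\gamma_i$. Here $\gamma_i=q_{b_1\cdots b_{k-1}}$ sits at the deepest level of the subdivision of $q$ used in the proof of Lemma \ref{la}, and I plan to reverse the descent argument: at each level $m=k-1,k-2,\ldots,0$, the $\delta$-thin property applied to the geodesic triangle $(q_{b_1\cdots b_{m}},q_{b_1\cdots b_{m}0},q_{b_1\cdots b_{m}1})$ produces a vertex within $\delta$ on the parent geodesic $q_{b_1\cdots b_{m}}$, so after at most $k$ such climbs one obtains a vertex $\zeta_3^{\prime}$ on $\gamma_0$ with $d(\zeta_3,\zeta_3^{\prime})\leq\delta k\leq\delta(\log_2(n)+1)$.

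The principal obstacle is the upward climb: the $\delta$-thin property only guarantees a nearby vertex on the union of the other two sides of the triangle, so in principle the closest vertex could lie on the sibling $q_{b_1\cdots b_{m}\bar{b}_{m+1}}$ rather than on the parent, producing a sideways rather than upward step. I expect to handle this by appealing to the precise form of conditions (i) and (ii) in the proof of Lemma \ref{la}, which identify which side of the triangle the nearby vertex falls on according to the position of the current vertex relative to the shared endpoints; this should give enough control to extract a monotone upward path of total length at most $\delta k$. A secondary technicality is checking that $\zeta_3^{\prime}$ lies past $\zeta_1$ on $\gamma_0$, so that the hypothesis $d(\iota(\gamma_0),\zeta_1)<d(\iota(\gamma_0),\zeta_3^{\prime})$ of Lemma \ref{la} delivers the correct $d_q$ comparison; this should follow from the fact that the climb from $\zeta_3$ stays on the same ``side'' of the subdivision as the descent $s_1$ from $\zeta_1$ to $\eta_1$, given the ordering $d_q(\iota(\gamma_0),\zeta_1)<d_q(\iota(\gamma_0),\zeta_3)<d_q(\iota(\gamma_0),\eta_1)$. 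Finally, $\eta_3\neq\zeta_3$ is automatic since $\eta_3$ is produced by Lemma \ref{la} off $\gamma_0$, hence on some $\gamma_j$ with $j\neq 0$, and the $d_q$-ordering excludes coincidence with $\zeta_3\in\gamma_i$.
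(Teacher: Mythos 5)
Your overall framework (reuse the binary subdivision from Lemma \ref{la} and climb the tree from $\zeta_3$) matches the paper's, but the first stage of your plan --- producing a vertex $\zeta_3'$ on $\gamma_0$ with $d(\zeta_3,\zeta_3')\leq\delta(\log_2(n)+1)$ and then re-running Lemma \ref{la} on the pair $(\zeta_1,\zeta_3')$ --- contains a genuine gap, and the obstacle you flag cannot be removed in the way you hope. The $\delta$-thin condition for the triangle $(q_b,q_{b0},q_{b1})$ places a vertex $v\in q_{b0}$ within $\delta$ of $q_b\cup q_{b1}$, and conditions (i) and (ii) in the proof of Lemma \ref{la} show precisely that when $v$ lies in the portion of $q_{b0}$ nearer the apex $\tau(\gamma_{r(b)})$, the nearby vertex falls on the sibling $q_{b1}$ and there need be \emph{no} vertex of the parent $q_b$ within $\delta$ of $v$. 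So a monotone upward climb to $\gamma_0$ is not available; indeed $\zeta_3$ can be at distance far exceeding $\delta(\log_2(n)+1)$ from all of $\gamma_0$. (Take $\delta=0$, i.e.\ a tree: the segment of $q$ between $\zeta_1$ and $\eta_1$ can be a long excursion that doubles back on itself, and any $\zeta_3$ at the tip of that excursion is far from $\gamma_0$, yet the lemma still demands $\eta_3$ with $d(\zeta_3,\eta_3)=0$.) This is exactly why the correct bound is $2\delta(\log_2(n)+1)$ by a ``tent''-shaped path rather than $\delta(\log_2(n)+1)$ to $\gamma_0$ followed by $\delta(\log_2(n)+1)$ back down.

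The paper's proof instead climbs from $\zeta_3$ through strictly shorter binary sequences only until the first step that is forced sideways (or until $q_0\cup q_1$ is reached), at which point the sequence provably descends level by level back to some $\gamma_j\subset q$; the ascent and descent each cost at most $\delta k$, giving $2\delta k$ without ever touching $\gamma_0$. Part 2 is then argued directly: one checks whether this path $s_3$ meets any of the geodesics $q_{b(u_j)}$ traversed by $s_1$, and uses the hypothesis $d_q(\iota(\gamma_0),\zeta_1)<d_q(\iota(\gamma_0),\zeta_3)<d_q(\iota(\gamma_0),\eta_1)$ to compare positions along the shared $q_{b(u_j)}$ via (a)--(c) of Lemma \ref{la}. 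Your proposed route for part 2 has the additional unaddressed issue that even if a $\zeta_3'$ existed, the vertex $\eta_1$ produced by a fresh application of Lemma \ref{la} to $(\zeta_1,\zeta_3')$ is not automatically the given $\eta_1$, so the ordering conclusion would not transfer without further argument. You would need to restructure the argument along the lines of the turn-around-and-descend construction to close the gap.
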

\begin{proof}
Again consider the  subdivision constructed above with the path $s_1$
from $\zeta_1$ to $\eta_1$. Let the terminal vertex of the path  $s_1$ lie on
$\gamma_l$ for some $l\leq n$.
If $\zeta_3$ lies on
$\gamma_0$ then we have the hypothesis of the previous lemma, and the
lemma holds. So assume that $\zeta_3\in \gamma_i$ for some
$i=1,\ldots ,l$. 

Consider a geodesic triangle $q_bq_{b0}q_{b1}$ in the subdivision, for
some binary sequence $b$. Let $v$ be a vertex on $q_{b\varepsilon}$,
where $\varepsilon =0$ or $1$. Then there exists a vertex $v'$ on
$q_b\cup q_{b\xi}$, where $\xi =(\varepsilon +1)\mod 2$, such that $d(v,v')\leq
\delta$. 

It follows from the  subdivision  that $\gamma_i=q_{b'}$ for some binary sequence
${b'}$ of length $k$ (remember that $n=2^k$). By the above paragraph
we may 
choose a sequence  of vertices $\zeta_3=v_1,v_2,\ldots $
through the $q_b$'s such that each  $v_j$ lies on $q_{b(v_j)}$, where
each binary sequence $b(v_j)$ is distinct. Let $v_r$ be
the first vertex such that either
\begin{enumerate}
\item[(i)] the length of $b(v_r)$ is the same as the length of $b(v_{r+1})$, or
\item[(ii)] $v_r$ lies on $q_1\cup q_2$.
\end{enumerate}      
(i) The length of the binary sequence  decreases by one each
time until $v_r$ is reached. Therefore there exists a path of length
at most $\delta k$
from $\zeta _3=v_1$ to $v_r$. Now by the construction of
the subdivision and the argument in the previous lemma, the length of
the binary sequence associated to each vertex in the sequence
$v_{r+1},v_{r+2},\ldots $ increases by one each time. Thus for some $m$, $v_m$
lies on $q$ and there exists a path from $v_{r+1}$ to $v_m$ of length
at most $\delta k$. Let $\eta_3=v_m$. Then $d(\zeta_3,\eta_3)\leq
2\delta k$ and in this case the part $1$ holds.

{\flushleft (ii)} Without loss of generality let $v_r$ lie on $q_0$. Once again there is a path from $\zeta_3=v_1$ to
$v_r$ of length at most $\delta k$. If $v_{r+1}$ lies on $\gamma_0$
then $\eta_3=v_{r+1}$ and part $1$ holds. If $v_{r+1}$ lies on $q_1$,
then we use the same argument to as (i) to show that there is a $v_r$
on $q$ such that $d(v_{r+1},v_m)\leq \delta k$. Hence part $1$ holds
in all cases. 

Let $s_3$ be the path of length at most $2\delta k$ constructed
above. From the previous lemma $s_1$ passes through $k$ distinct $q_b$'s. Let
the sequence of vertices which lie on these $q_b$'s be  
$\zeta_1=u_0,u_1,\ldots ,u_k=\eta_1$ such that $u_j$ lies on
$q_{b(u_j)}$ for some binary sequence $b(u_j)$. Now, if $s_3$ never
passes through $q_{b(u_j)}$ for all $j=1,\ldots ,k$, then by the
construction of the subdivision, part $2$ clearly holds. Therefore,
assume that $s_3$ passes through $q_{b(u_j)}$ at the vertex $x$, for
some $j$.  Now, since
$d_q(\iota(\gamma_0),\zeta_1)<d_q(\iota(\gamma _0),\zeta
_3)<d_q(\iota(\gamma_0),\eta _1)$, it follows from the construction of
$s_3$ that 
 $d(\iota(q_{b(u_j)}),x)\leq d(\iota(q_{b(u_j)}),u_j)$. It clearly
 follows from the construction of the $q_b$'s and statement (a),
 (b) and (c) from the previous lemma that
 $d_q(\iota(\gamma_0),\eta_1)>d_q(\iota(\gamma_0),\eta_3)$. See Figure
  \ref{subdiv3}.
\begin{figure}[ht]
\begin{center}
\psfrag{s}{{\scriptsize{$s_1$}}}
\psfrag{z1}{{\scriptsize{$\zeta _1$}}}
\psfrag{e1}{{\scriptsize{$\eta_1$}}}
\psfrag{z3}{{\scriptsize{$\zeta _3$}}}
\psfrag{e3}{{\scriptsize{$\eta_3$}}}
\psfrag{q0}{{\scriptsize{$q_0$}}}
\psfrag{q1}{{\scriptsize{$q_1$}}}
\psfrag{q00}{{\scriptsize{$q_{00}$}}}
\psfrag{q01}{{\scriptsize{$q_{01}$}}}
\psfrag{q10}{{\scriptsize{$q_{10}$}}}
\psfrag{q11}{{\scriptsize{$q_{11}$}}}
\psfrag{g0}{{\scriptsize{$\gamma _0$}}}  
\psfrag{g1}{{\scriptsize{$\gamma _1=q_{000}$}}}    
\psfrag{g2}{{\scriptsize{$\gamma _2=q_{001}$}}}  
\psfrag{g3}{{\scriptsize{$\gamma _3=q_{010}$}}}    
\psfrag{g4}{{\scriptsize{$\gamma _4=q_{011}$}}}  
\psfrag{g5}{{\scriptsize{$\gamma _5=q_{100}$}}}    
\psfrag{g6}{{\scriptsize{$\gamma _6=q_{101}$}}}  
\psfrag{g7}{{\scriptsize{$\gamma _7=q_{110}$}}}    
\psfrag{g8}{{\scriptsize{$\gamma _8=q_{111}$}}}  
\includegraphics[scale=0.5]{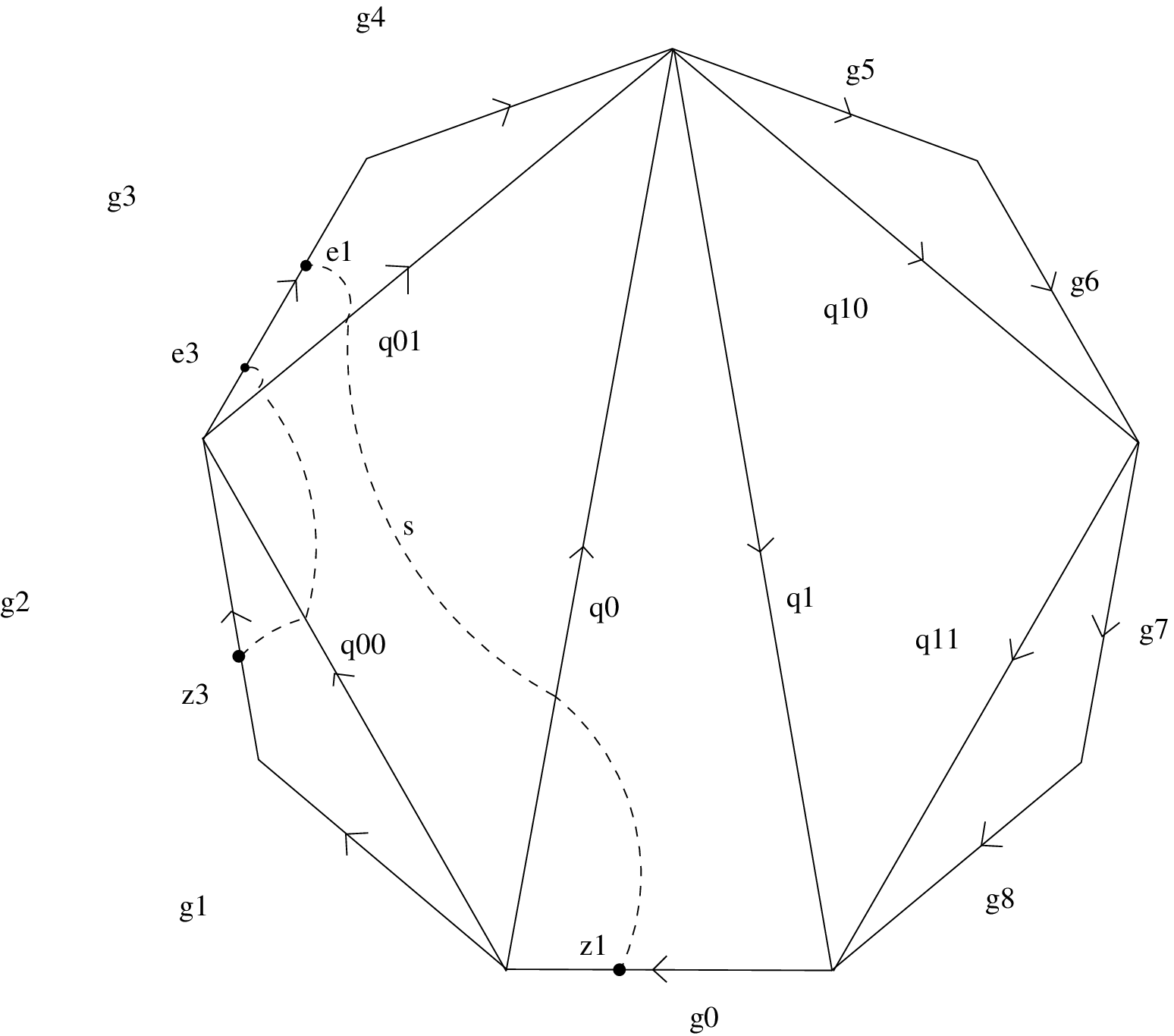}
\\
\vspace{0.38cm}                                                                  
\refstepcounter{figure}\label{subdiv3}                                             
Figure \thefigure                  
\end{center}
\end{figure}
Hence the lemma holds.
\end{proof}
\section{Proof of Theorem \ref{Thexp}}
\begin{proof}[Proof of Theorem \ref{Thexp}.]\label{startthm}  
We begin by considering all genus $n$ Wicks forms in
$\mathcal{A}^{\pm 1}$ (remember that this is an infinitely countable alphabet) and choosing one
which when mapped to a word in $F(X)$ which is conjugate
to h in $H$ has the property of being a word of shortest
length over all such mapping of Wicks forms to words into $F(X)$ which are
conjugate to $h$ in $H$.    

The genus of $h$ is equal to $n$ in $H=\langle X|R\rangle$. Therefore, by definition,
there exist  words $a_i,b_i\in X\cup X^{-1}$, for $i=1,\ldots n$, such that 
\begin{equation*}
h=_H [a_1,b_1][a_2,b_2]\ldots [a_n,b_n].
\end{equation*}
In $\mathcal{A}^{\pm 1}$ the orientable word $U=[A_1,B_1]\ldots
[A_n,B_n]$ is a genus $n$ Wicks form. Let $L=\{A_1,\ldots
,A_n,B_1,\ldots ,B_n\}$ and let $\phi:F(L)\to F(X)$
be a  homomorphism defined by
 $\phi(A_i)=a_i$ and $\phi(B_j)=b_j$, for all $i,j=1,\dots ,n$. We
shall call this a \emph{labelling function for $U$}. Note that  
\begin{eqnarray*}
\phi(U) & = & [\phi(A_1),\phi(B_1)]\ldots[\phi(A_n),\phi(B_n)]\\
        & = & [a_1,b_1]\ldots [a_n,b_n].
\end{eqnarray*}
Let $\mathcal{F}$ be the set of pairs $(U,\phi)$ where $U$ is a genus
$n$ Wicks form and $\phi$ is a labelling function for $U$ such that
$\phi(U)$ is conjugate to $h$ in $H$. Consider a pair $(W,\theta)$ in
which  $|\theta(W)|$ is minimal amongst all pairs in
$\mathcal{F}$(since we have shown that at least one pair exists in
$\mathcal{F}$ this is always possible). Clearly $\theta(E)$ is minimal
in $H$ for each letter $E$ of $W$ or our choice of minimal pair in
$\mathcal F$ would be incorrect.  We should
note that there exists no genus $m$ Wicks form $V$, $m<n$, with a
labelling function $\psi$ such that $\psi(V)$ is conjugate to $h$ in $H$,
as this would contradict the genus of $h$ in $H$.  

Now, for our minimal pair $(W,\theta)$ in $\mathcal{F}$ we are able to
state a number of preliminary lemmas. The first
lemma we state uses ideas from \cite{lys}, Lemma
$12$. It displays bounded length properties of subwords of $\theta(A)$,
where $A$ is any letter of $W$. This lemma will be used regularly in the proof of Lemma \ref{lsc} which
shows that in the Cayley graph $\Gamma_X(H)$ the path represented by
the word $\theta(W)$ is `close' to a geodesic path representing a word
$F$ which is equal to $\theta(W)$ in $H$.  Let $\Gamma
_W$ be the genus $n$ graph associated to $W$.
\begin{lemma}\label{l1}
Let $E_1,\ldots ,E_r,A,E_{r+1},\ldots ,E_s,A^{-1},E_{s+1},\ldots ,E_t$
be the cyclic  sequence of letters in the regular Eulerian circuit
$W$ in $\Gamma _W$. Let $\theta(A)=a_1a_2$ where $a_1$ and $a_2$ are
subwords of the word
$\theta(A)$ which is minimal in $H$. Similarly, let $\theta(E_i)=e_{i1}e_{i2}$, 
for $1\leq i\leq t$. Then 
\begin{enumerate}
\item[(i)] $|a_1|\leq |e_{i2}\theta(E_{i+1}\ldots E_r)a_1|_H$, for
  $1\leq i\leq r$;
\item[(ii)] $|a_2|\leq |a_2\theta(E_{r+1}\ldots E_{j-1})e_{j1}|_H$,
  for $r+1\leq j\leq s ;$ and 
\item[(iii)] $|a_1|\leq |a_2\theta(E_{r+1}\ldots
  E_sA^{-1}E_{s+1}\ldots E_{k-1})e_{k1}|_H$, for $s+1\leq k\leq t$.
\end{enumerate}   
\end{lemma}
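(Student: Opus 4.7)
The plan is to prove all three inequalities by contradiction, exploiting the minimality of the pair $(W,\theta)$ in $\mathcal{F}$. I would begin with part (i). Suppose the inequality fails for some $i\in\{1,\ldots,r\}$ and some choice of splittings $\theta(A)=a_1a_2$, $\theta(E_i)=e_{i1}e_{i2}$, so that $|e_{i2}\theta(E_{i+1}\ldots E_r)a_1|_H<|a_1|$. Let $u$ be a word minimal in $H$ representing the element $e_{i2}\theta(E_{i+1}\ldots E_r)a_1$, so $|u|<|a_1|$. Regarding $\theta(W)$ as a closed loop in $\Gamma_X(H)$, the hypothesis says that the subpath running from the midpoint of $\theta(E_i)$ to the midpoint of $\theta(A)$ admits a strictly shorter shortcut $u$.

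The main step is then to realise this shortcut as $\theta'(W')$ for some pair $(W',\theta')\in\mathcal{F}$, which would contradict the choice of $(W,\theta)$ as minimal. To do so I would redefine $\theta'(E_i)=e_{i1}$, $\theta'(A)=u\cdot a_2$, and set $\theta'(E_j)=1$ for each $j\in\{i+1,\ldots,r\}$, so that the contributions of those letters on the arc from the midpoint of $\theta(E_i)$ to the midpoint of $\theta(A)$ are absorbed into $u$. The complementary occurrences of these letters, together with the occurrence of $A^{-1}$ in $W$, must then be compensated simultaneously by combinatorial substitutions on the Wicks form, producing an orientable word $W'$ of genus at most $n$ together with a labelling $\theta'$ such that $\theta'(W')$ is conjugate to $\theta(W)$ in $H$ and
\[
|\theta'(W')|\leq|\theta(W)|-|a_1|+|u|<|\theta(W)|.
\]
Parts (ii) and (iii) follow by the same scheme: (ii) is a mirror of (i) with $a_2$ in place of $a_1$ and the subword being shortened lying between the two occurrences of $A$; (iii) is slightly more delicate, since the shortening crosses the second occurrence $A^{-1}$, forcing both occurrences of $A$ in $W$ to be modified simultaneously subject to the constraint $\theta'(A^{-1})=\theta'(A)^{-1}$.

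The main obstacle is the combinatorial realisation step: ensuring that after the substitutions the rewired orientable word $W'$ is still cyclically reduced and irredundant (hence a Wicks form), and that its genus is correctly controlled. Redundancies and free reductions can be removed by bounded combinatorial adjustments, and the formula $genus(\Gamma)=(1-v+e)/2$ tracks how the genus of the underlying graph changes. If $W'$ still has genus $n$ we contradict the minimality of $(W,\theta)$; if the genus has dropped below $n$, then $W'$ is a labelled Wicks form of smaller genus whose image is conjugate to $h$, directly contradicting the hypothesis $genus_H(h)=n$. Either way a contradiction is obtained, completing the proof.
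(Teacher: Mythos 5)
Your overall strategy --- build a competing pair $(W',\theta')\in\mathcal{F}$ from the hypothesised shortcut and contradict the minimality of $|\theta(W)|$ --- is exactly the strategy of the paper, and your relabelling of $A$ by $u\cdot a_2$ is essentially the paper's new edge label. But the construction you propose for the rest of $W'$ has a genuine gap, and it sits precisely in the step you flag as ``the main obstacle''. Setting $\theta'(E_j)=1$ for $j=i+1,\ldots,r$ and truncating $\theta'(E_i)$ to $e_{i1}$ changes the labels at \emph{both} occurrences of each of these letters ($W$ is quadratic, so each occurs again elsewhere with exponent $-1$), and at those other occurrences nothing absorbs the deleted material; hence $\theta'(W')$ is no longer conjugate to $h$ in $H$. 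The ``compensation by combinatorial substitutions'' that you defer is the entire content of the argument, and once you do compensate (by reinserting the deleted labels elsewhere) your claimed bound $|\theta'(W')|\leq|\theta(W)|-|a_1|+|u|$ no longer follows; it also drops the factor of $2$ coming from the two occurrences of each letter.

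The paper sidesteps all of this with a local graph surgery that deletes no labels: bisect the single edge $E_j$ into two new letters $E_{j1},E_{j2}$ labelled by $e_{j1}$ and $e_{j2}$ (so both halves survive at both occurrences), detach one endpoint of $A$ and reattach it at the bisection point as a new edge $A'$ labelled $a_1t_1$, where $t_1$ is a minimal word equal in $H$ to $a_2\theta(E_{r+1}\ldots E_{j-1})e_{j1}$, and leave every other letter and its label untouched. One then checks (with a small extra move when the detached vertex has degree $3$, to avoid creating a degree-$2$ vertex) that $W'$ is still a genus $n$ Wicks form, verifies $\psi(W')=_H\theta(W)$ by direct computation, and reads off $|\psi(W')|-|\theta(W)|=2|a_1t_1|-2|a_1a_2|\leq 2|t_1|-2|a_2|$; minimality forces the left-hand side to be nonnegative, giving $|t_1|\geq|a_2|$ with no redundancy removal, no genus drop, and no compensation required. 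To repair your argument you would have to supply this (or an equivalent) surgery explicitly; as written the contradiction is not established.
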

\begin{proof}
First we shall prove statement (ii). Let $t_1$ be a word in $F(X)$
which is minimal in
$H$ such that $t_1=_H a_2\theta(E_{r+1}\ldots E_{j-1})e_{j1}$.
Consider the edges
labelled $A$ and $E_j$  in the graph $\Gamma _W$. Bisect $E_j$ into
two new edges, 
the  first new edge shall be denoted $E_{j1}$ and the
second  $E_{j2}$, where $E_{j1},E_{j2}$ are elements of
$\mathcal{A}^{\pm 1}$ not occurring in $W$.
Now remove edge $A$  and add a new edge,
  $A'$,  joining 
$\iota (A)$ to $\tau(E_{j1})$.   See Figure \ref{newfgra2}.
\begin{figure}[htb]
\begin{center}
\psfrag{A}{{\tiny $A$}}
\psfrag{Ej}{{\tiny $E_j$}}
\psfrag{Er1}{{\tiny $E_{r\hspace{-1.5pt}+\hspace{-1.5pt}1}$}}
\psfrag{Er2}{{\tiny $E_s$}}
\psfrag{A'}{{\tiny $A'$}}
\psfrag{Ej1}{{\tiny $E_{j1}$}}
\psfrag{Ej2}{{\tiny $E_{j2}$}}
\psfrag{Ej'}{{\tiny $E_{j\hspace{-1.5pt}-\hspace{-1.5pt}1}$}}
\psfrag{W}{{\tiny $W$}}
\psfrag{W'}{{\tiny $W'$}}
\psfrag{Er}{{\tiny $E_r$}}
\psfrag{Es}{{\tiny $E_{s\hspace{-1.5pt}+\hspace{-1.5pt}1}$}}
\includegraphics[scale=0.52]{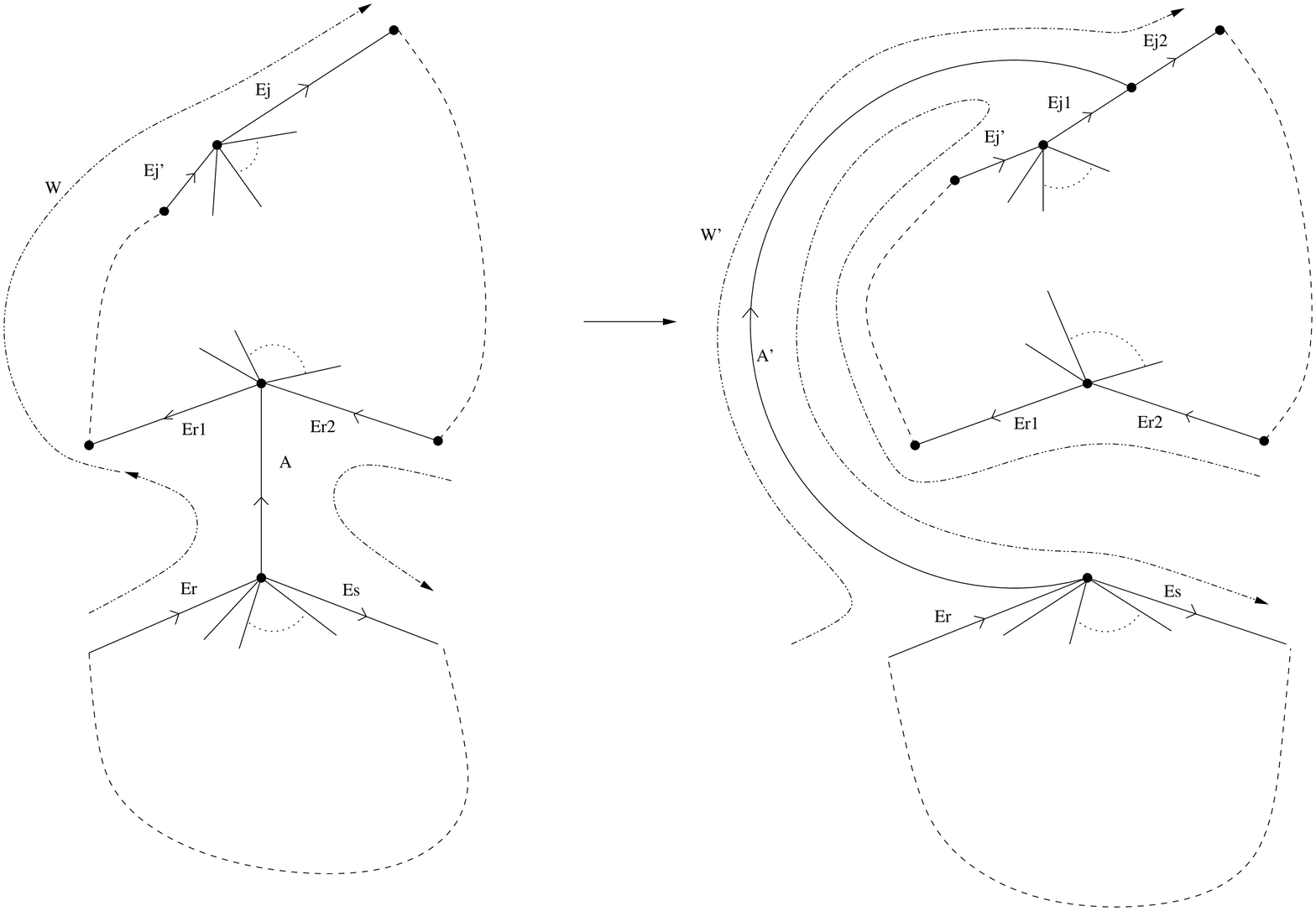}
\\
\vspace{0.38cm}                                                                  
\refstepcounter{figure}\label{newfgra2}                                             
Figure \thefigure                  
\end{center}
\end{figure}
Note that if $\tau(A)$ in $\Gamma _W$ has degree $3$ then we also
remove the edges $E_{r+1}$ and $E_s$ and add a new edge $E'\in
\mathcal{A}^{\pm 1}$ from $\iota(E_s)$ to $\tau(E_{r+1})$.
We can see from the new graph that we have a regular Eulerian
circuit $W'$. We know that $E_j$ occurs as $E_i^{-1}$ for some $i\neq
j$, $1\leq i\leq t$ in $W$ so $E_i$ is replaced by
$E_{j2}^{-1}E_{j1}^{-1}$. But since these new edges remain together in the new
Eulerian circuit $W'$ we shall ignore this as it does not effect the
proof. Therefore the  cyclic sequence of letters of $W'$ is either
\begin{equation*}
E_1,\ldots ,E_r,A',E_{j2},\ldots ,E_s,E_{r+1},\ldots
,E_{j1},A'^{-1},E_{s+1},\ldots ,E_t
\end{equation*}
 if the degree of $\tau(A)$ in $\Gamma _W$ is greater than three or
\begin{equation*}
E_1,\ldots ,E_r,A',E_{j2},\ldots ,E_{s-1},E',E_{r+2}\ldots
,E_{j1},A'^{-1},E_{s+1},\ldots ,E_t, 
\end{equation*}
otherwise.
Suppose that the numbers of vertices and edges  of $\Gamma _W$ are $v$ and $e$
respectively. If the degree of $\tau(A)$ in $\Gamma _W$ is greater than
three  then the numbers
of vertices and edges in the new graph are $v+1$ and $e+1$ respectively.
If the degree of $\tau(A)$ is three then the numbers of vertices
and edges in the new graph are $v$ and $e$ respectively.
In both cases it is easy to check, using equation (\ref{gengra}), that
the new graph also has genus $n$ and
has no vertices of degree $1$ or $2$. Thus $W'$ is a genus $n$ Wicks
form. Now we shall define a labelling function for $W'$. First consider
the case where $\tau(A)$ has degree greater than $3$. Let
$L=\{E_1,\ldots ,E_{j-1},E_{j1},E_{j2},E_{j+1},\ldots ,E_t,A'\}$. We
define a  homomorphism $\psi:F(L)\to F(X)$
in the following way.
\begin{equation*}
\psi(E) \left
\{\begin{array}{ll}
a_1t_1 & \textrm{if $E=A'$}\\
e_{j1} & \textrm{if $E=E_{j1}$}\\
e_{j2} & \textrm{if $E=E_{j2}$}\\
\theta(E) & \textrm{otherwise}
\end{array}
\right. .
\end{equation*} 
Therefore,
\begin{eqnarray*}
\psi(W') & =_H &\psi(E_1\ldots E_rA'E_{j2}\ldots E_sE_{r+1}\ldots
E_{j1}A'^{-1}E_{s+1}\ldots E_t)\\
         & =_H & \psi(E_1\ldots E_r)\psi(A')\psi(E_{j2})\psi(E_{j+1}\ldots
         E_sE_{r+1}\ldots E_{j-1})\\
&&\qquad\qquad\qquad\qquad\qquad\qquad\qquad\qquad
         \psi(E_{j1})\psi(A')^{-1}\psi(E_{s+1}\ldots E_t)\\
         & =_H & \theta(E_1\ldots E_r)a_1t_1e_{j2}\theta(E_{j+1}\ldots
         E_sE_{r+1}\ldots E_{j-1})
         e_{j1}t_1^{-1}a_1^{-1}\theta(E_{s+1}\ldots E_t)\\
         &=_H & \theta(E_1\ldots E_r)a_1a_2\theta(E_{r+1}\ldots E_{j-1})
         e_{j1}e_{j2}\theta(E_{j+1}\ldots E_sE_{r+1}\ldots E_{j-1})\\
&&\qquad\qquad\qquad\qquad\qquad\qquad 
         e_{j1}e_{j1}^{-1}\theta(E_{r+1}\ldots
         E_{j-1})^{-1}a_2^{-1}a_1^{-1}\theta(E_{s+1}\ldots
         E_t)\\
&=_H & \theta(E_1\ldots E_r)a_1a_2\theta(E_{r+1}\ldots E_{j-1})
         e_{j1}e_{j2}\theta(E_{j+1}\ldots 
         E_s)a_2^{-1}a_1^{-1}\theta(E_{s+1}\ldots E_t)\\
&=_H & \theta(E_1\ldots E_rAE_{r+1}\ldots E_sA^{-1}E_{s+1}\ldots E_t)\\    
&=_H & \theta(W).
\end{eqnarray*}
This implies that $\psi(W')$ is conjugate to $h$ in $H$. Thus
$(W',\psi)$ is an element of $\mathcal{F}$. Now since $|\theta(W)|$ was chosen to
be minimal over all pairs in $\mathcal{F}$, it follows that
\begin{eqnarray}\label{psitheta1}
|\psi(W')| & \geq & |\theta(W)|.
           \end{eqnarray}
Also, we can see from Figure \ref{newfgra2} that 
\begin{eqnarray}\label{psitheta2}
|\theta(W)| & = & |\psi(W')|-2|\psi(A')|-2|\psi (E_{j1})|-2|\psi (E_{j2})| +2|\theta(A)|+2|\theta(E_j)|\nonumber\\
            & = &
            |\psi(W')|-2|a_1t_1|-2|e_{j1}|-2|e_{j2}|+2|a_1a_2|+|e_{1j}e_{j2}|\nonumber \\
            & \geq & |\psi(W')|-2|t_1|+2|a_2|.
\end{eqnarray}
Therefore, by equations (\ref{psitheta1}) and (\ref{psitheta2}), it is
clear  that
\begin{eqnarray*}
|a_2\theta(E_{r+1}\ldots E_{j-1})e_{j1}|_H & = &|t_1|\\
         & \geq & |a_2|.               
\end{eqnarray*}
as required.

Now suppose that the degree of $\tau(A)$ in $\Gamma_W$ is three. Let
\begin{equation*}
L'=\{E_1,\ldots ,E_r,E_{r+2},\ldots ,E_{j-1},E_{j1},E_{j2},E_{j+1},\ldots
,E_{s-1},E_{s+2},\ldots  ,E_t,A',E'\}.
\end{equation*}
 We define a  homomorphism $\psi':F(L')\to F(X)$
in the following way.
\begin{equation*}
\psi'(E) \left
\{\begin{array}{ll}
a_1t_1 & \textrm{if $E=A'$}\\
e_{j1} & \textrm{if $E=E_{j1}$}\\
e_{j2} & \textrm{if $E=E_{j2}$}\\
\theta(E_sE_{r+1})     & \textrm{if $E=E'$}\\
\theta(E)     & \textrm{otherwise}
\end{array}
\right. .
\end{equation*} 
We can use the same argument to show that $(W',\psi')\in\mathcal{F}$
and again, since $|\theta(W)|$ was chosen to be minimal over all pairs
in $\mathcal{F}$, it follows that
\begin{eqnarray}\label{psitheta3}
|\psi'(W')| & \geq & |\theta(W)|.
           \end{eqnarray}
Also, we know  that 
\begin{eqnarray}\label{psitheta4}
|\theta(W)| & = &
|\psi'(W')|-2|\psi'(A')|-2|\psi'(E')|-2|\psi'(E_{j1})|-2|\psi'(E_{j2})|\nonumber\\
&&\qquad\qquad\qquad\qquad+2|\theta(A)|+
2|\theta(E_{r+1})|+2|\theta(E_{s})|+
2|\theta(E_{j})|\nonumber\\
            & = &
            |\psi'(W')|-2|a_1t_1|-2|\theta(E_sE_{r+1})|-2|e_{j1}|-2|e_{j2}|      
\nonumber\\
&&\qquad\qquad\qquad\qquad+2|a_
1a_2|+2|\theta(E_{r+1})|+2|\theta(E_{s})|+2|e_{j1}e_{j2}|
\nonumber\\
            & \geq & |\psi'(W')|-2|t_1|+2|a_2|.
\end{eqnarray}
Therefore, by equations (\ref{psitheta3}) and (\ref{psitheta4}), it is
clear  that
\begin{eqnarray*}
|a_2\theta(E_{r+1}\ldots E_{j-1})e_{j1}|_H & = &|t_1|\\
         & \geq & |a_2|.               
\end{eqnarray*}
as required. Hence in both cases $(ii)$ holds.

The same argument, using $(W^{-1},\theta)\in \mathcal{F}$, can be used
to show  that $(i)$ holds. Therefore we only need to consider case
$(iii)$. Let $t_2$ be a word in $F(X)$ which is minimal in $H$ such that 
\begin{equation}
t_2=_H
a_2\theta(E_{r+1}\ldots E_s)a_2^{-1}a_1^{-1}\theta(E_{s+1}\ldots E_{k-1})e_{k1}.
\end{equation}
Again we follow the same method of altering the graph $\Gamma
_W$. Bisect the edge labelled $E_k$. The first half shall be labelled by
$E_{k1}$ and the second half labelled by $E_{k2}$, where $E_{k1},
E_{k2}$ are elements of $ \mathcal{A}^{\pm 1}$ not occurring in $W$. Again, we shall
remove the edge $A$ but now we add a new edge $A''$ joining $\tau(E_{k1})$ to
$\tau(A)$, with $A''\in \mathcal{A}^{\pm 1}$. See Figure \ref{newfgra3}.
\begin{figure}[ht]
\begin{center}
\psfrag{A}{{\tiny $A$}}
\psfrag{Ek}{{\tiny $E_k$}}
\psfrag{Er1}{{\tiny $E_{r\hspace{-1.5pt}+\hspace{-1.5pt}1}$}}
\psfrag{Er2}{{\tiny $E_s$}}
\psfrag{A2}{{\tiny $A''$}}
\psfrag{Ek1}{{\tiny $E_{k1}$}}
\psfrag{Ek2}{{\tiny $E_{k2}$}}
\psfrag{Ek'}{{\tiny $E_{k\hspace{-1.5pt}-\hspace{-1.5pt}1}$}}
\psfrag{W}{{\tiny $W$}}
\psfrag{U}{{\tiny $W''$}}
\psfrag{Er}{{\tiny $E_r$}}
\psfrag{Es}{{\tiny $E_{s\hspace{-1.5pt}+\hspace{-1.5pt}1}$}}
\includegraphics[scale=0.45]{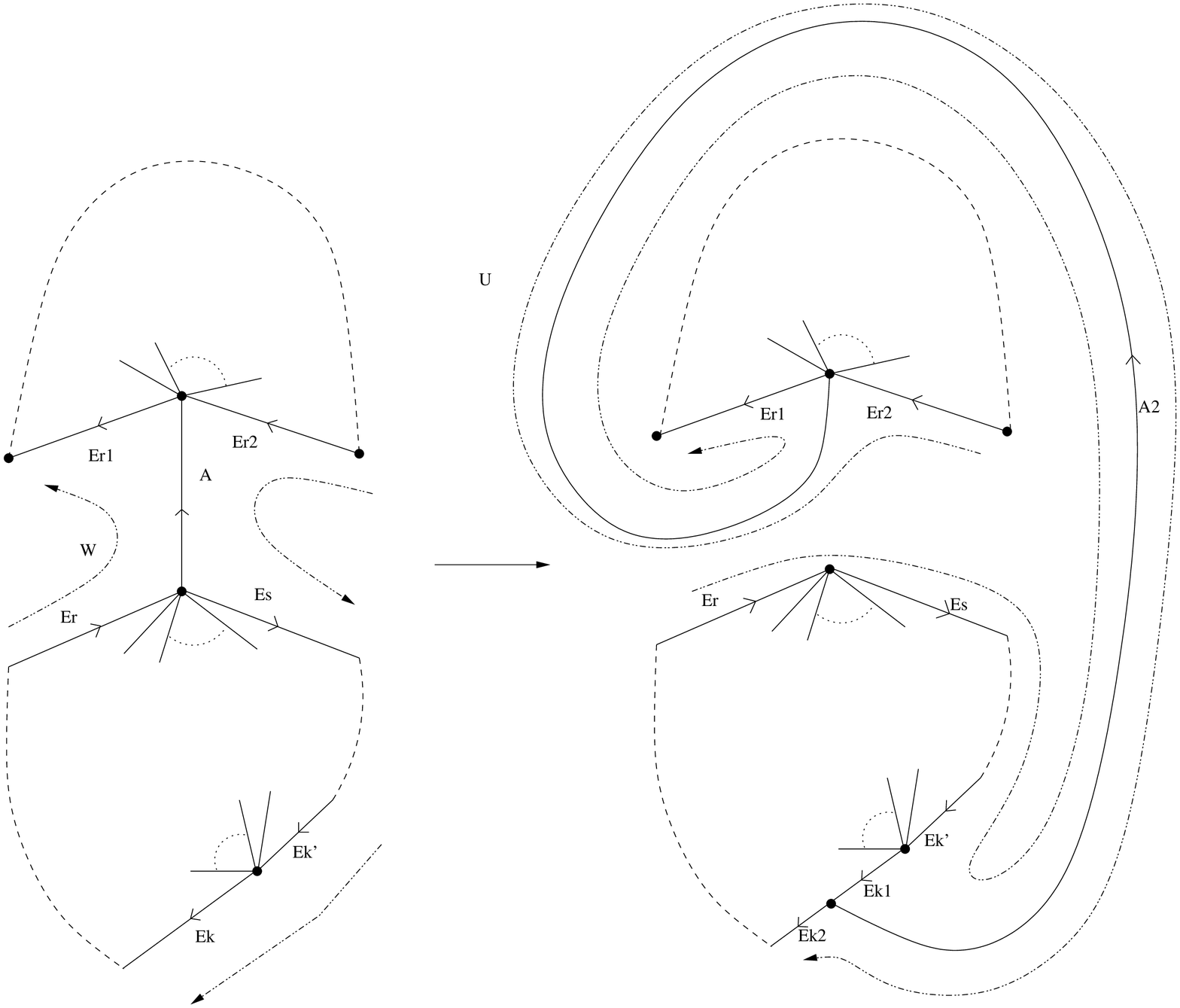}
\\
\vspace{0.38cm}                                                                  
\refstepcounter{figure}\label{newfgra3}                                             
Figure \thefigure                  
\end{center}
\end{figure}
If $\iota(A)$ in $\Gamma _W$ has degree $3$ then we also
remove the edges $E_{r}$ and $E_{s+1}$ and add a new edge $E''\in
\mathcal{A}^{\pm 1}$ from $\iota(E_r)$ to $\tau(E_{s+1})$.
We can see from the new graph that we have a regular Eulerian
circuit $W''$. The cyclic sequence of letters of $W''$ is either
\begin{equation*}
E_1,\ldots ,E_r,E_{s+1},\ldots ,E_{k1},{A''},E_{r+1},\ldots
,E_s,{A''}^{-1},E_{k2},\ldots ,E_t,
\end{equation*}
 if the degree of $\iota(A)$ in $\Gamma _W$ is greater than three, or
\begin{equation*}
E_1,\ldots ,E_{r-1},E'',E_{s+2},\ldots ,,E_{k1},{A''},E_{r+1},\ldots
,E_s,{A''}^{-1},E_{k2},\ldots ,E_t,
\end{equation*}
otherwise. Once again, it is easy to check that the genus of the new
graph is $n$ and it contains no vertices of degree $1$ or $2$. Thus
$W''$ is a genus $n$ Wicks form. We shall define a labelling function
for $W''$. First  consider
the case where $\iota(A)$ has degree greater than $3$. Let
$K=\{E_1,\ldots ,E_{k-1},E_{k1},E_{k2},E_{k+1},\ldots ,E_t,A''\}$. We
define a  homomorphism $\phi:F(K)\to F(X)$
in the following way.
\begin{equation*}
\phi(E) \left
\{\begin{array}{ll}
a_2^{-1}t_2 & \textrm{if $E=A''$}\\
e_{k1} & \textrm{if $E=E_{k1}$}\\
e_{k2} & \textrm{if $E=E_{k2}$}\\
\theta(E) & \textrm{otherwise}
\end{array}
\right. .
\end{equation*} 
Therefore,
\begin{eqnarray*}
\phi(W') & =_H &\phi(E_1\ldots E_rE_{s+1}\ldots
E_{k1}{A''}^{-1}E_{r+1}\ldots E_sA''E_{k2}\ldots E_t\\
         & =_H & \phi(E_1\ldots E_rE_{s+1}\ldots E_{k-1})
         \phi(E_{k1})\phi(A'')^{-1}\phi(E_{r+1} \ldots E_s)
         \\
&&\qquad\qquad\qquad\qquad\qquad\qquad\qquad\qquad\qquad\qquad\phi(A'')\phi(E_{k2})\phi(E_{k+1}\ldots
         E_t)\\
          & =_H & \theta(E_1\ldots E_rE_{s+1}\ldots E_{k-1})
         e_{k1}t_2^{-1}a_2\theta(E_{r+1} \ldots
         E_s)a_2^{-1}t_2e_{k2}\theta(E_{k+1}\ldots
         E_t)\\
         & =_H & \theta(E_1\ldots E_rE_{s+1}\ldots E_{k-1})
         e_{k1}e_{k1}^{-1}\theta(E_{s+1}\ldots
         E_{k-1})^{-1}a_1a_2\theta(E_{r+1}\ldots E_{s})^{-1}a_2^{-1}a_2\\
&& 
         \,\theta(E_{r+1}\ldots E_s)a_2^{-1}a_2\theta(E_{r+1}\ldots E_s)
a_2^{-1}a_1^{-1}\theta(E_{s+1}
\ldots E_{k-1})e_{k1}e_{k2}\theta(E_{k+1}\ldots E_t)\\
        &=_H & \theta(E_1\ldots E_r)a_1a_2\theta(E_{r+1}\ldots
         E_s)a_2^{-1}a_1^{-1}\theta(E_{s+1}\ldots
         E_{k-1})e_{k1}e_{k2}\theta(E_{k+1}\ldots E_t)\\
&=_H & \theta(E_1\ldots E_rAE_{r+1}\ldots E_sA^{-1}E_{s+1}\ldots E_t)\\    
&=_H & \theta(W).
\end{eqnarray*}
This implies that $\phi(W'')$ is conjugate to $h$ in $H$. Thus
$(W'',\phi)$ is an element of $\mathcal{F}$. Now since $|\theta(W)|$ was chosen to
be minimal over all pairs in $\mathcal{F}$, it follows that
\begin{eqnarray}\label{psitheta5}
|\phi(W'')| & \geq & |\theta(W)|.
           \end{eqnarray}
Also, we can see from Figure \ref{newfgra3} that 
\begin{eqnarray}\label{psitheta6}
|\theta(W)| & = & |\phi(W'')|-2|\phi(A'')|-2|\phi (E_{k1})|-2|\phi (E_{k2})| +2|\theta(A)|+2|\theta(E_k)|\nonumber\\
            & = &
            |\phi(W'')|-2|a_2^{-1}t_2|-2|e_{k1}|-2|e_{k2}|+2|a_1a_2|+|e_{k1}e_{k2}|\nonumber \\
            & \geq & |\phi(W'')|-2|t_2|+2|a_1|.
\end{eqnarray}
Therefore, by equations (\ref{psitheta5}) and (\ref{psitheta6}), it is
clear  that
\begin{eqnarray*}
|a_2\theta(E_{r+1}\ldots E_s)a_2^{-1}a_1^{-1}\theta(E_{s+1}\ldots E_{k-1})e_{k_1}|_H & = &|t_2|\\
         & \geq & |a_1|.               
\end{eqnarray*}
as required.
Now Suppose that $\iota(A)$ has degree three in $\Gamma_W$. Let 
\begin{equation*}
K'=\{E_1,\ldots ,E_{r-1},E_{r+1},\ldots ,E_s,E_{s+1},\ldots
,E_{k-1},E_{k1},E_{k2},E_{k+1},\ldots ,E_t,A'',E''\}.
\end{equation*}
We define a homomorphism $\phi':F(K')\to F(X)$ in the following way.
\begin{equation*}
\phi'(E) \left
\{\begin{array}{ll}
a_2^{-1}t_2 & \textrm{if $E=A''$}\\
e_{k1} & \textrm{if $E=E_{k1}$}\\
e_{k2} & \textrm{if $E=E_{k2}$}\\
\theta(E_rE_{s+1}) & \textrm{if $E=E''$}\\ 
\theta(E) & \textrm{otherwise}
\end{array}
\right. .
\end{equation*}
Again, we can use the same argument to show that $(W'',\phi')\in\mathcal{F}$
and again, since $|\theta(W)|$ was chosen to be minimal over all pairs
in $\mathcal{F}$, it follows that
\begin{eqnarray}\label{psitheta7}
|\phi'(W'')| & \geq & |\theta(W)|.
           \end{eqnarray}
Also, we know  that 
\begin{eqnarray}\label{psitheta8}
|\theta(W)| & = &
|\phi'(W'')|-2|\phi'(A'')|-2|\phi'(E'')|-2|\phi'(E_{k1})|-2|E_{k2}|\nonumber\\
&&\qquad\qquad\qquad\qquad+2|\theta(A)|+
2|\theta(E_{r})|+2|\theta(E_{s+1})|+
2|\theta(E_{k})|\nonumber\\
            & = &
            |\phi'(W'')|-2|a_2^{-1}t_2|-2|\theta(E_rE_{s+1})|-2|e_{k1}|-2|e_{k2}|      
\nonumber\\
&&\qquad\qquad\qquad\qquad+2|a_
1a_2|+2|\theta(E_{r})|+2|\theta(E_{s+1})|+2|e_{k1}e_{k2}|
\nonumber\\
            & \geq & |\phi'(W'')|-2|t_2|+2|a_1|.
\end{eqnarray}
Therefore, by equations (\ref{psitheta7}) and (\ref{psitheta8}), it is
clear  that
\begin{eqnarray*}
|a_2\theta(E_{r+1}\ldots E_s)a_2^{-1}a_1^{-1}\theta(E_{s+1}\ldots E_{k-1})e_{k_1}|_H & = &|t_2|\\
         & \geq & |a_1|.               
\end{eqnarray*}
as required. Hence in both cases $(iii)$ holds. 
\end{proof}
Suppose that for each letter $E$ of the Wicks form $W$ we have  $|\theta(E)|\leq
12l+M+4$. By the following lemma the  maximum length of a genus
$n$ Wicks form is  $12n-6$. For a proof of this lemma  see
M.~Culler \cite{cul} Theorem $3.1$.
\begin{lemma}\label{cull}
Let $V$ be a Wicks form in the alphabet $\mathcal{A}^{\pm 1}$ such
that $genus_{F(\mathcal{A})}=m$. Then the length of $V$ is at most $12m-6$.
\end{lemma}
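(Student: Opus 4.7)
The plan is to combine the genus formula for the associated graph with a lower bound on vertex degrees in $\Gamma_V$. Let $v$ and $e$ denote the number of vertices and edges of $\Gamma_V$. Equation (\ref{gengra}) gives $\mathrm{genus}(\Gamma_V) = (1 - v + e)/2$, and since this genus equals $m$ by hypothesis, rearranging yields the single identity
\begin{equation*}
e - v = 2m - 1.
\end{equation*}

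Next I would invoke the observation made immediately after the definition of a Wicks form: condition 1 (free cyclic reduction) prevents any vertex of $\Gamma_V$ from having degree $1$, and condition 2 (irredundancy) prevents any vertex of $\Gamma_V$ from having degree $2$, since a degree-$2$ vertex $u$ with incident edges oriented away from $u$ labelled $x^{-1}$ and $y$ would force the subwords $(xy)^{\pm 1}$ to be the only occurrences of $x$ and $y$ in $V$. Hence every vertex of $\Gamma_V$ has degree at least $3$. The handshaking lemma then gives $2e = \sum_{u} \deg(u) \geq 3v$, i.e.\ $v \leq 2e/3$.

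Substituting into the Euler identity,
\begin{equation*}
2m - 1 \;=\; e - v \;\geq\; e - \tfrac{2e}{3} \;=\; \tfrac{e}{3},
\end{equation*}
so $e \leq 6m - 3$. Finally, each edge of $\Gamma_V$ corresponds to a letter of $\mathcal{A}$ which, by the orientable quadratic property, appears exactly twice in $V$ (once with each sign). Thus $|V| = 2e \leq 12m - 6$, as required.

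There is no serious obstacle in this argument; it is a two-line combination of Euler characteristic with a minimum-degree bound. The only delicate points are verifying that the graph-theoretic notion of genus used in (\ref{gengra}) coincides with the surface genus (which was established in the construction of $\Gamma_U$ via the disc identification), and checking carefully that irredundancy really does rule out degree-$2$ vertices, not merely degree-$1$ ones.
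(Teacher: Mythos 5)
Your argument is correct, but it is worth noting that the paper does not actually prove this lemma at all: it simply cites M.~Culler's Theorem~3.1 in \cite{cul}. What you have written is the standard self-contained Euler-characteristic argument, and every step checks out against the machinery already set up in the paper. The identification of $genus_{F(\mathcal{A})}(V)$ with $genus(\Gamma_V)$ is exactly what the paper asserts when it constructs the closed surface of genus $g$ from an orientable word of genus $g$, so $e-v=2m-1$ follows from equation~(\ref{gengra}); the minimum-degree-$3$ claim is stated verbatim in the paper just after the definition of a Wicks form (and your justification of why irredundancy kills degree-$2$ vertices is the right one: with the two outgoing edges relabelled as $x^{-1}$ and $y$, regularity forces the only occurrences of $x$ and $y$ to be inside the cyclic subwords $(xy)^{\pm1}$); and the handshaking inequality $2e\geq 3v$ combined with $e-v=2m-1$ gives $e\leq 6m-3$, hence $|V|=2e\leq 12m-6$ since each edge carries a letter appearing exactly twice. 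So your proposal buys a short, elementary, fully internal proof where the paper relies on an external reference; the only hypothesis you should make explicit is connectedness of $\Gamma_V$ (needed for the face count of $1$ in $\chi(S)=v-e+1$), which holds because $\Gamma_V$ is the image of the boundary of a single disc and carries an Eulerian circuit.
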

It clearly follows that  we have part
$1$ of the Theorem.  Therefore, we shall assume
that there is at  least one letter of $W$ which is labelled by a word
of length
greater  than $12l+M+4$ in $F(X)$(This
of course implies that there are two since each letter appears twice.) For
convenience in the proof we shall take
$\hat{W}$ to be a cyclic permutation of $W$ such that the last letter
of  $\hat{W}$ is labelled by a word of length  greater than $12l+M+4$ in $F(X)$ but
one  should note that the
proof  does go through using any cyclic permutation.

Consider $\theta(\hat{W})$ as a path in the Cayley graph $\Gamma_X(H)$. Let
$F$ and $R$ be words in $F(X)$ which are minimal  in $H$ such that $F=_H \theta(\hat{W})$ and
$h=_HRFR^{-1}$. See Figure \ref{fandr}.
\begin{figure}[ht]
\begin{center}
\psfrag{F}{{\scriptsize $F$}}
\psfrag{R}{{\scriptsize $R$}}
\psfrag{W}{{\scriptsize $\theta(\hat{W})$}}
\psfrag{h}{{\scriptsize $h$}}
\includegraphics[scale=0.6]{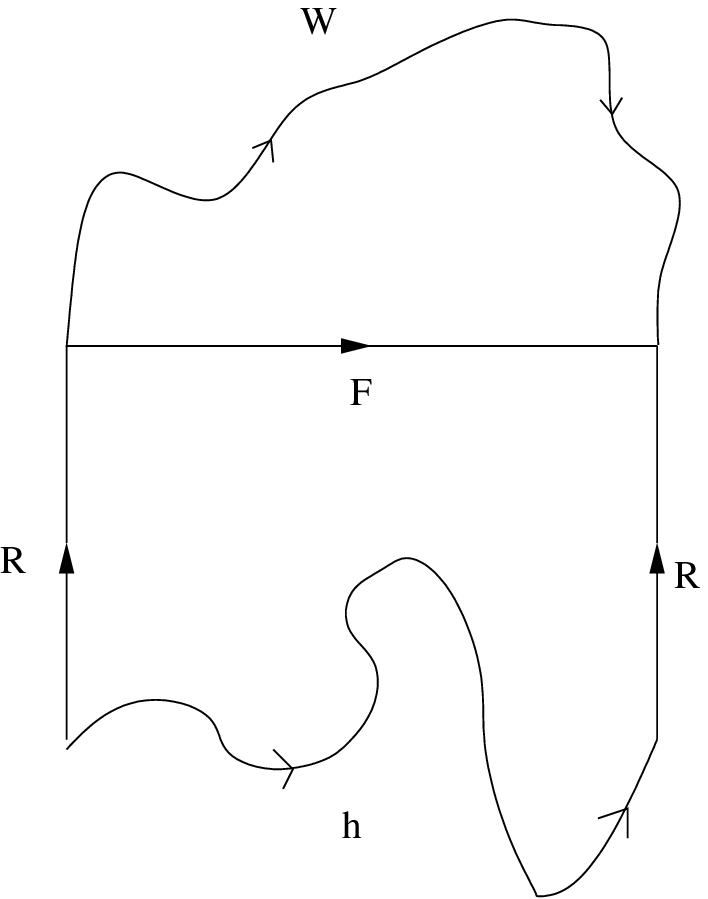}
\\
\vspace{0.38cm}                                                                  
\refstepcounter{figure}\label{fandr}                                             
Figure \thefigure                  
\end{center}
\end{figure} 
Suppose that $\alpha$ is the label on one of the letters of
$\hat{W}$ with $|\alpha|>12l+M+4$. Since the word $\alpha$ is a minimal  in
$H$,  the path in the
Cayley  graph above labelled by $\theta(\hat{W})$  contains a geodesic subpath
labelled by $\alpha$. 
\begin{lemma}\label{lsc}
There exists a vertex $v$ on the geodesic path labelled by $F$ such that
$d(\tau(\alpha),v)\leq 5l+M+3$.
\end{lemma}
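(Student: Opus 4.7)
The plan is to apply the subdivision machinery of Lemmas \ref{la} and \ref{lac} to the closed loop $q$ in the Cayley graph $\Gamma_X(H)$ formed by $F$ together with $\theta(\hat W)^{-1}$. Since $F$ is minimal in $H$ it contributes a single geodesic side, while the path $\theta(\hat W)$ naturally breaks up into $|\hat W|$ geodesic subpaths, one per letter of $\hat W$, because each $\theta(E)$ is minimal in $H$ by our choice of $(W,\theta)\in\mathcal F$. By Lemma \ref{cull} we have $|\hat W|\le 12n-6$, so this closed loop has at most $12n-5$ geodesic sides, and consequently any invocation of Lemma \ref{la} on it produces bounds of at most $l=\delta(\log_2(12n-6)+1)$, with Lemma \ref{lac} providing $2l$ in the analogous way.

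Next I would locate the vertex $\tau(\alpha)$ as a corner of $q$ separating two consecutive geodesic sides. Choosing two vertices $\zeta_1,\zeta_2$ on the geodesic side labelled $\alpha$ within a short controlled distance of $\tau(\alpha)$---the hypothesis $|\alpha|>12l+M+4$ guarantees plenty of room---I would apply Lemma \ref{la} with $\alpha$ playing the role of $\gamma_0$. This yields vertices $\eta_1,\eta_2$ on $q\setminus\alpha$ satisfying $d(\zeta_i,\eta_i)\le l$. In the favourable case $\eta_i$ lies on the side $F$ of $q$, and then $\tau(\alpha)$ lies within distance bounded by $l$ plus the controlled distance from $\tau(\alpha)$ to $\zeta_i$ of a vertex of $F$; a small correction supplied by Lemma \ref{lb} accounts for replacing this nearby vertex by one on $F$ itself at precise distance, contributing the $M$ summand to the final bound.

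The main obstacle---and where the constant $5l$ comes from---is to rule out the alternative in which the $\eta_i$ land on some interior geodesic side $\theta(E_j)$ of $q$. In such a configuration a long subword of $\alpha$ would run $l$-close to a long subword of $\theta(E_j)$, and Lemma \ref{l1} precisely forbids this by allowing a graph-modification of $\Gamma_W$ and a relabelling that strictly shortens $|\theta(W)|$, contradicting the minimality of $(W,\theta)$ in $\mathcal F$. When the only remaining possibility is that the obstruction lies on the second occurrence of the letter associated with $\alpha$, we can \emph{bounce} the argument by an application of Lemma \ref{lac}, paying an additional $2l$. Iterating this bounce at most twice forces an $\eta$ onto $F$ and yields the total $5l=l+2l+2l$. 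Combining with the $M+3$ drawn from Lemma \ref{lb} then completes the required bound $d(\tau(\alpha),v)\le 5l+M+3$.
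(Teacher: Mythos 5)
Your setup is the same as the paper's: form the closed polygon $q$ from $F$ and the sides $\theta(E)$ (each geodesic by minimality of $(W,\theta)$), pick interior vertices of $\alpha$ at controlled distance from its endpoints, project them onto $q\setminus\alpha$ via Lemma \ref{la}, and use Lemma \ref{l1} together with minimality in $\mathcal F$ to rule out landing on the geodesic of any letter other than $A^{\pm 1}$. Up to that point you are on track. The genuine gap is in the remaining case, where the projected vertices land on $\alpha^{-1}$. Your proposal to ``bounce'' twice via Lemma \ref{lac}, paying $2l$ each time until a vertex is forced onto $F$, does not work: Lemma \ref{lac} only produces another vertex $\eta_3$ on $q$ that is closer to the start of the circuit; nothing forces $\eta_3$ onto $F$ rather than back onto $\alpha^{-1}$, so the iteration need not terminate on $F$ after two steps (or at all), and the count $5l=l+2l+2l$ has no justification.

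What the paper actually does in this case is different in kind, not just in bookkeeping. It takes $p_1$ near $\iota(\alpha)$ and $q_1$ near $\tau(\alpha)$, each at distance $2l+1$ from the respective endpoint. If both project to $\alpha^{-1}$, then (using the auxiliary Lemma \ref{x1x2}) the subsegment $\alpha_1$ of $\alpha$ between them is joined to its own occurrence on $\alpha^{-1}$ by two paths of length at most $2l$, and the minimality of $|\theta(W)|$ over $\mathcal F$ combined with the conjugacy bound of Lemma \ref{lb} forces $|\alpha_1|\leq 2l+M+1$, whence $|\alpha|\leq 6l+M+3$, contradicting $|\alpha|>12l+M+4$. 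In the mixed case one takes the \emph{first} vertex $u_1$ of $\alpha$ lying within $l$ of $\alpha^{-1}$; the segment from $u_1$ to $q_1$ is bounded by $2l+M+1$ by the same conjugacy/minimality argument, and the vertex immediately preceding $u_1$ must project onto $F$, giving $d(\tau(\alpha),v)\leq (2l+1)+(2l+M+1)+1+l=5l+M+3$. So the constant $M$ enters through Lemma \ref{lb} applied to the overlap of $\alpha$ with $\alpha^{-1}$, not, as you suggest, through adjusting a nearby vertex on $F$; and the decomposition of $5l$ you propose does not reflect the actual sources of the bound. To repair your argument you need to replace the bouncing step with a length-contradiction (or overlap-bounding) argument exploiting the minimality of $(W,\theta)$ when $\alpha$ runs close to $\alpha^{-1}$.
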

\begin{proof}
We shall assume that the letter of $\hat{W}$ labelled by $\alpha$ in
the $\Gamma_X(H)$ appears before be its inverse i.e.~$\hat{W}=\ldots
A \ldots A^{-1}\ldots$ and $\theta(A)=\alpha$. 
It is easy to show that the same proof follows through for the converse.
Let $p_1$ and $q_1$ be vertices on $\alpha$ such that
$d(\iota(\alpha),p_1)=d(q_1,\tau (\alpha))=2l+1$. (Note that we could
use smaller segments of $\alpha$ of length $l+1$ here to prove this lemma
but we require this set up for Lemma \ref{chouv} to hold.) See Figure \ref{l1a}.
 \begin{figure}[ht]
\begin{center}
\psfrag{F}{{\scriptsize $F$}}
\psfrag{A}{{\scriptsize $\alpha$}}
\psfrag{l+1}{{\tiny $2l\hspace{-2pt}+\hspace{-2pt}1$}}
\psfrag{p1}{{\scriptsize $p_1$}}
\psfrag{q1}{{\scriptsize $q_1$}}
\includegraphics[scale=0.6]{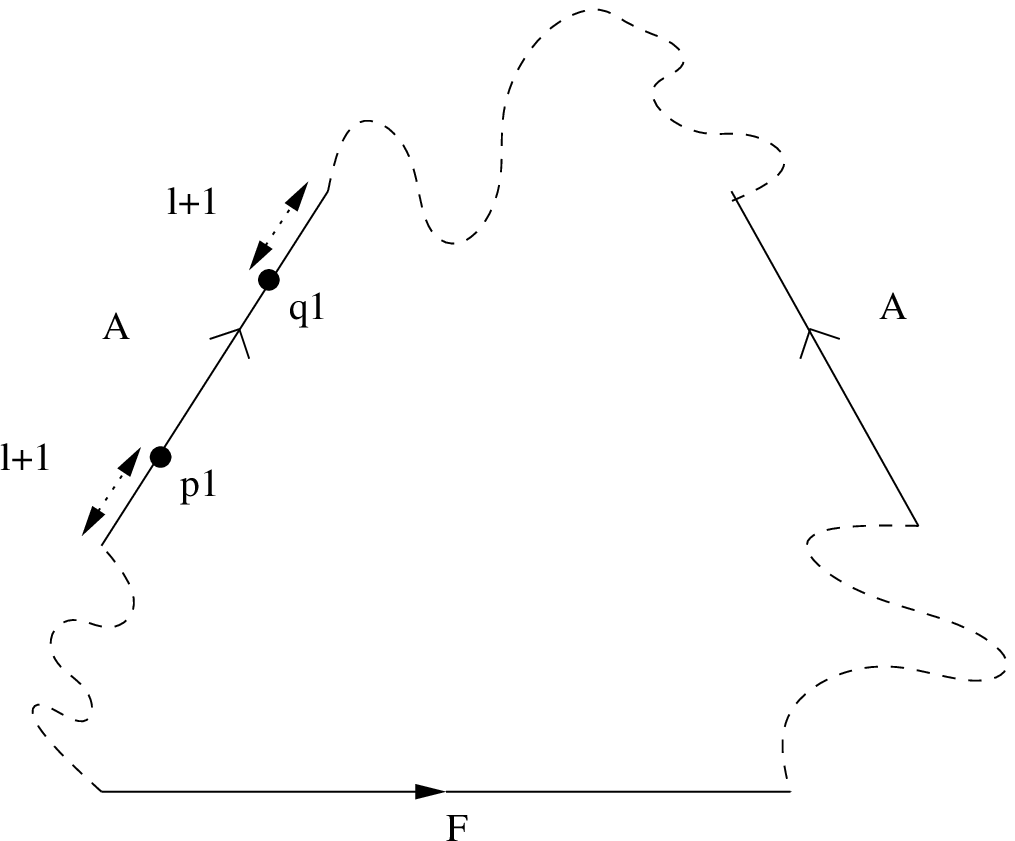}
\\
\vspace{0.38cm}                                                                  
\refstepcounter{figure}\label{l1a}                                             
Figure \thefigure                  
\end{center}
\end{figure}
By Lemma \ref{la} part 1, there are vertices $p_2$ and $q_2$, which lie
either on
the path labelled by $\theta(\hat{W})-\{\alpha\}$ or on the
geodesic path labelled by $F$, such
that 
\begin{eqnarray*}
d(p_1 ,p_2), d(q_1 ,q_2) & \leq &  \delta(\log _2(|\hat{W}|))\\
                         & \leq &  \delta(\log _2(12n-6)+1)\\
                         & \leq & l,
\end{eqnarray*}
Suppose that
$p_2$ lies on some geodesic path $\beta$ which is the label of some
letter $B$ in $\hat{W}$ which is different to $A$. 
Lemma \ref{l1} implies that $p_1$ is within $l$ of 
$\iota(\alpha)\cup\tau(\alpha)$ but we have chosen $p_1$ such that
this is not the case. Therefore $p_2$ and similarly $q_2$  can
only lie on $\alpha ^{-1}\cup F$. Also, if $p_2$ lies on $\alpha ^{-1}$ then by Lemma
\ref{la} part 2, $q_2$ lies on $\alpha^{-1}$. This leaves us with just
three possibilities. See Figure \ref{l1b}.
 \begin{figure}[ht]
\begin{center}
\psfrag{F}{{\scriptsize $F$}}
\psfrag{A}{{\scriptsize $\alpha$}}
\psfrag{l+1}{{\tiny $2l\hspace{-2pt}+\hspace{-2pt}1$}}
\psfrag{l}{{\tiny $\leq l$}}
\psfrag{p1}{{\scriptsize $p_1$}}
\psfrag{q1}{{\scriptsize $q_1$}}
\psfrag{p2}{{\scriptsize $p_2$}}
\psfrag{q2}{{\scriptsize $q_2$}}
\includegraphics[scale=0.5]{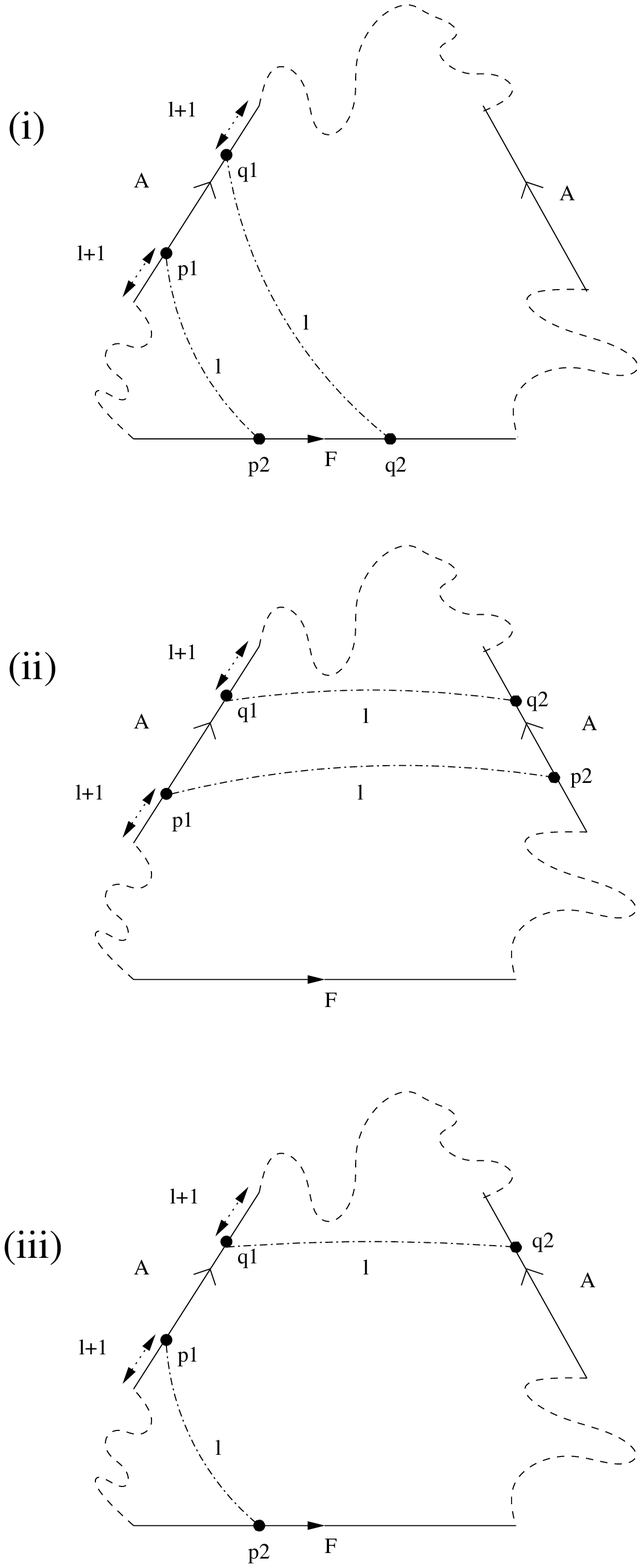}
\\
\vspace{0.38cm}                                                                  
\refstepcounter{figure}\label{l1b}                                             
Figure \thefigure                  
\end{center}
\end{figure}

{\flushleft{(i)}} Suppose that both $p_2$ and $q_2$ lie on $F$. By the triangle
inequality it immediately follows  that
\begin{eqnarray*}
d(\tau(\alpha),q_2) &\leq & d(\tau(\alpha),q_1)+d(q_1,q_2)\\
               &\leq & 3l+1.
\end{eqnarray*}
Therefore in this case the lemma holds.

{\flushleft{(ii)}} Suppose that both $p_2$ and $q_2$ lie on
$\alpha^{-1}$. Let  $q_3$ be
the vertex lying on $\alpha^{-1}$ such that
$d(\iota(\alpha^{-1}),q_3)=2l+1$.
We need the following lemma.
\begin{lemma}\label{x1x2}
Let $x_1$ and $x_2$  be any vertices on $\alpha$ and
$\alpha^{-1}$ respectively such that\\ \mbox{$d(x_1,x_2)\leq k$} for
some constant $k$. If $x_3$ is a vertex on $\alpha^{-1}$ such
that $d(\iota(\alpha^{-1}),x_3)=d(\tau(\alpha ),x_1)$
then $d(x_2,x_3)\leq k$.
\end{lemma}
\begin{proof}
The proof falls into the following two cases:
\begin{enumerate}
\item[(a)] $d(\iota(\alpha^{-1}),x_2)\leq d(\iota(\alpha^{-1}),x_3)=d(\tau(\alpha
  ),x_1)$
\item[(b)]  $d(\iota(\alpha^{-1}),x_2)> d(\iota(\alpha^{-1}),x_3)=d(\tau(\alpha
  ),x_1)$
\end{enumerate}
(a) By Lemma \ref{l1} $d(\tau(\alpha ),x_1)\leq
d(x_1,\iota(\alpha^{-1}))$ and from the hypothesis and the
triangle inequality  it follows that
\begin{equation}\label{alp1}
d(x_1,\iota(\alpha^{-1})) \leq  k+d(\iota(\alpha^{-1}),x_2).
\end{equation} 
Since $\alpha$ is a geodesic path we have  
\begin{equation}\label{alp2}
d(\tau(\alpha),x_1)=d(\iota(\alpha^{-1}),x_3) = d(\iota(
\alpha^{-1}),x_2) + d(x_2,x_3).
\end{equation}
It follows from equations (\ref{alp1}) and (\ref{alp2}) that $d(x_2,x_3)\leq k$.

{\flushleft{ (b)}}  By Lemma \ref{l1} $d(\iota(\alpha^{-1} ),x_2)\leq
d(x_2,\tau(\alpha ))$ and from the hypothesis and the
triangle inequality  it follows that
\begin{equation}\label{alp3}
d(x_2,\tau(\alpha)) \leq  k+d(\tau(\alpha),x_1).
\end{equation} 
Since $\alpha$ is a geodesic path we have  
\begin{equation}\label{alp4}
d(\tau(\alpha),x_1)=d(\iota(\alpha^{-1}),x_3) = d(\iota(
\alpha^{-1}),x_2) - d(x_2,x_3).
\end{equation}
It follows from equations (\ref{alp3}) and (\ref{alp4}) that $d(x_2,x_3)\leq k$.
Hence the lemma holds.
\end{proof}
  
Returning to case (ii), the lemma above implies that $d(q_2,q_3)\leq
l$. Thus $d(q_1,q_3)\leq 2l$. Similarly, if $p_3$ is the vertex on
$\alpha^{-1}$ such that $d(\tau(\alpha ^{-1}),p_3)=2l+1$, we can follow the same
argument to show that $d(p_1,p_3)\leq 2l$. 

Let the segment of $\alpha$ from $p_1$ to $q_1$ be labelled by
$\alpha _1$. Therefore the segment on $\alpha^{-1}$ from $q_3$ to $p_3$ is
labelled $\alpha _1^{-1}$. Let $s$ and $t$ be  geodesic paths from $p_1$ and
$q_1$ to $p_3$ and $q_3$ respectively. We have shown such paths to
have length at most $2l$. See Figure \ref{l1c}.
\begin{figure}[ht]
\begin{center}
\psfrag{A1}{{\scriptsize $\alpha _1$}}
\psfrag{s}{{\scriptsize $s$}}
\psfrag{t}{{\scriptsize $t$}}
\psfrag{F}{{\scriptsize $F$}}
\includegraphics[scale=0.5]{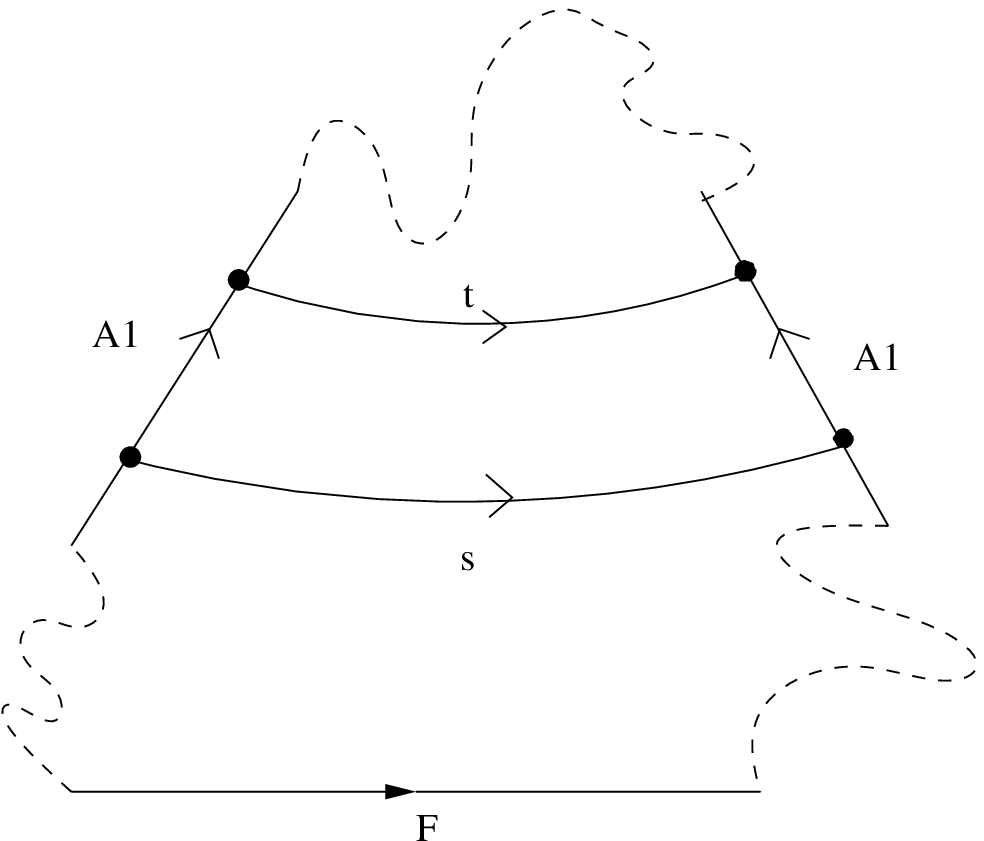}
\\
\vspace{0.38cm}                                                                  
\refstepcounter{figure}\label{l1c}                                             
Figure \thefigure                  
\end{center}
\end{figure} 
Since $\theta(W)$ was chosen to be of shortest length over all pairs
in  $\mathcal{F}$, we can
use Lemma \ref{lb} to show that 
\begin{eqnarray*}
|\alpha_1| &\leq & \frac{1}{2}(|s|+|t|)+M+1\\
      &\leq & \frac{1}{2}(2l+2l)+M+1\\
      &\leq & 2l+M+1.
\end{eqnarray*}
Therefore it follows that
\begin{eqnarray*}
|\alpha| & = & d(\iota(\alpha),p_1)+|\alpha_1|+d(q_1,\tau(\alpha))\\
    &\leq & 2l+1+2l+M+1+2l+1\\
    &\leq & 6l+M+3.
\end{eqnarray*}
We have a contradiction. Hence this case can't occur.

{\flushleft{(iii)}} Suppose that $p_2$ lies on $F$ and $q_2$ lies on
$\alpha^{-1}$. See Figure \ref{l1b}. Again let $q_3$ be the vertex on
$\alpha^{-1}$ such that $d(\iota(\alpha^{-1}),q_3)=2l+1$. As in case (ii) we can
show that $d(q_1,q_3)\leq 2l$. As before, by Lemma \ref{la} and Lemma \ref{l1},
for each vertex $u$ of $\alpha$ lying between $p_1$ and $q_1$, there exists a
vertex $v$ on $F\cup\alpha^{-1}$ such that $d(u,v)\leq l$. Let $u_1$ be the
first vertex along $\alpha$ which is within $l$ of a vertex $v_1$ on
$\alpha^{-1}$. By Lemma \ref{la} part $2$ vertex $u_1$ clearly lies
between $p_1$  and $q_1$ on $\alpha$. Let $v_2$ be the vertex on $\alpha^{-1}$
such that $d(\iota(\alpha^{-1}),v_2)=d(\tau(\alpha),u_1)$. By Lemma
\ref{x1x2} it follows that
$d(v_1,v_2)\leq l$. Thus $d(u_1,v_2)\leq 2l$.

Let the segment of $\alpha$ from $u_1$ to $q_1$ be labelled by
$\alpha_2$. Therefore the segment on $\alpha^{-1}$ from $q_3$ to $v_2$ is
labelled $\alpha_2^{-1}$. Let $s'$ and $t'$ be  geodesic paths from $p_1$ and
$u_1$ to $p_3$ and $v_2$ respectively. We have shown such paths to
have length at most $2l$. Again, since $\theta(W)$ was chosen to be of
shortest  length in $\mathcal{F}$, we can
use Lemma \ref{la} to show that 
\begin{eqnarray*}
|\alpha_2| &\leq & \frac{1}{2}(|s'|+|t'|)+M+1\\
      &\leq & \frac{1}{2}(2l+2l)+M+1\\
      &\leq & 2l+M+1.
\end{eqnarray*}
Let $u_2$ be the vertex on $\alpha$ such that
$d(\iota(\alpha),u_1)=d(\iota(\alpha),u_2)+1$. Since $u_1$ was chosen to be the
first vertex along $\alpha$ which was within $l$ of a vertex on
$\alpha^{-1}$, there exists a vertex $v_3$ on $F$ such that $d(u_2,v_3)\leq
l$. See Figure \ref{l1d}.
\begin{figure}[ht]
\begin{center}
\psfrag{A}{{\scriptsize $\alpha$}}
\psfrag{l}{{\scriptsize $\leq l$}}
\psfrag{x}{{\scriptsize $4l\hspace{-2pt}+\hspace{-2pt}M\hspace{-2pt}+\hspace{-2pt}3\hspace{-2pt}\geq$}}
\psfrag{v3}{{\scriptsize $v_3$}}
\psfrag{u2}{{\scriptsize $u_2$}}
\psfrag{F}{{\scriptsize $F$}}
\includegraphics[scale=0.5]{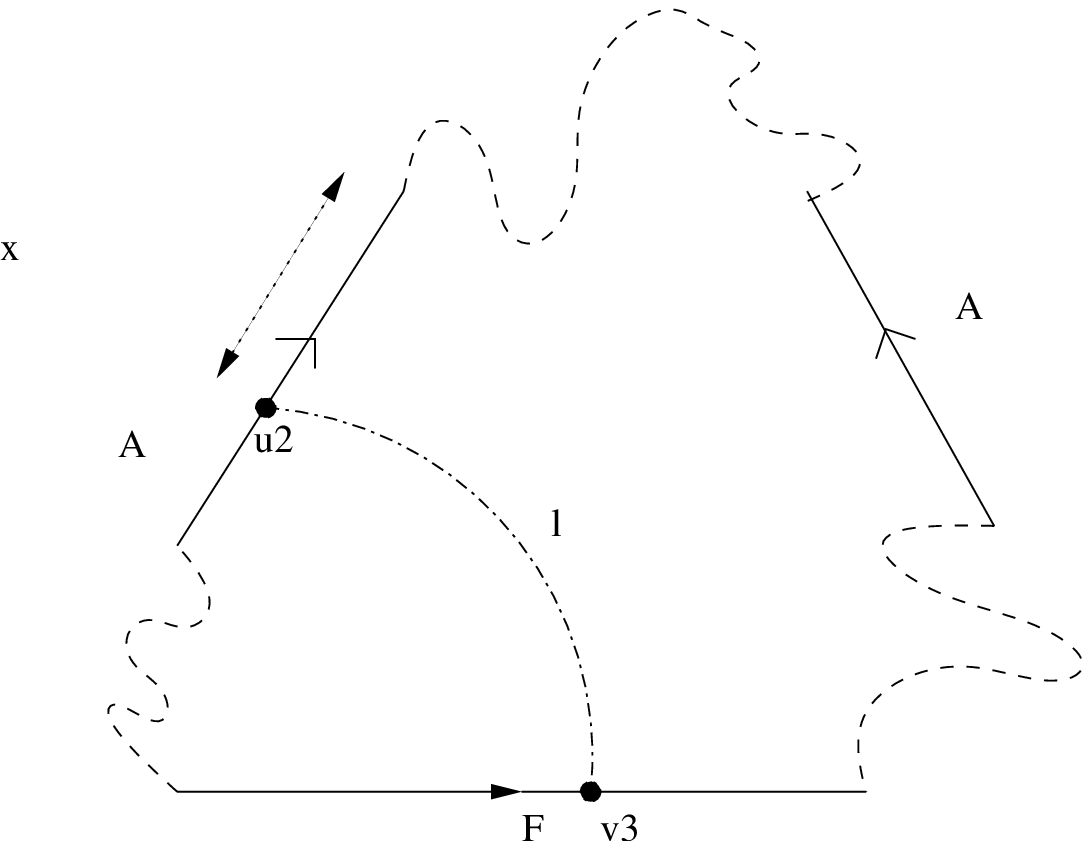}
\\
\vspace{0.38cm}                                                                  
\refstepcounter{figure}\label{l1d}                                             
Figure \thefigure                  
\end{center}
\end{figure} 
Clearly it follows that
\begin{eqnarray*}
d(\tau(\alpha),v_3) &\leq & d(\tau(\alpha),q_1)+|\alpha_2|+1+d(u_2,v_3)\\
               &\leq & 2l+1+2l+M+1+1+l\\
                &\leq & 5l+M+3.
\end{eqnarray*}
Hence the Lemma holds in all cases.
\end{proof}
Consider all letters of $\hat{W}$ which have labels of length greater than
$12l+M+4$ in $F(X)$. We shall call these the {\emph{long edges}} of
$\hat{W}$. All other letters shall be called {\emph{short edges}}. The
terminal vertex of each long edge in
$\Gamma_X(H)$ has each been shown, in the previous lemma,
 to be within $5l+M+3$ of some vertex on $F$. 
Let $B$ be a long edge of $\hat{W}$ which is not the first long edge
in the sequence of letters. Since
$\hat{W}$ is quadratic,  $B$
appears twice, once with exponent $1$ and once with exponent
$-1$. First we shall consider the appearance of $B$ with exponent $1$.
In the sequence of letters of $\hat{W}$, let $A^{\pm 1}$ be the long edge
before $B$ in the sequence such that no long edge appears between
$A^{\pm 1}$ and $B$ (note that $A^{\pm 1}$ could be $B^{-1}$). By Lemma
\ref{lsc} there exist vertices $u$ and $v$ on
$F$ such that $d(\tau(\theta(A^{\pm 1})),u),d(\tau(\theta(B)),v)\leq 5l+M+3$.
See  Figure \ref{apmb}.
\begin{figure}[ht]
\begin{center}
\psfrag{A}{{\scriptsize $\theta(A^{\pm 1})$}}
\psfrag{l}{{\scriptsize $\leq
    5l\hspace{-1pt}+\hspace{-1pt}M\hspace{-1pt}+\hspace{-1pt} 3$}}
\psfrag{B}{{\scriptsize $\theta(B)$}}
\psfrag{u}{{\scriptsize $u$}}
\psfrag{F}{{\scriptsize $F$}}
\psfrag{v}{{\scriptsize $v$}}
\psfrag{all short edges}{all short edges}
\includegraphics[scale=0.4]{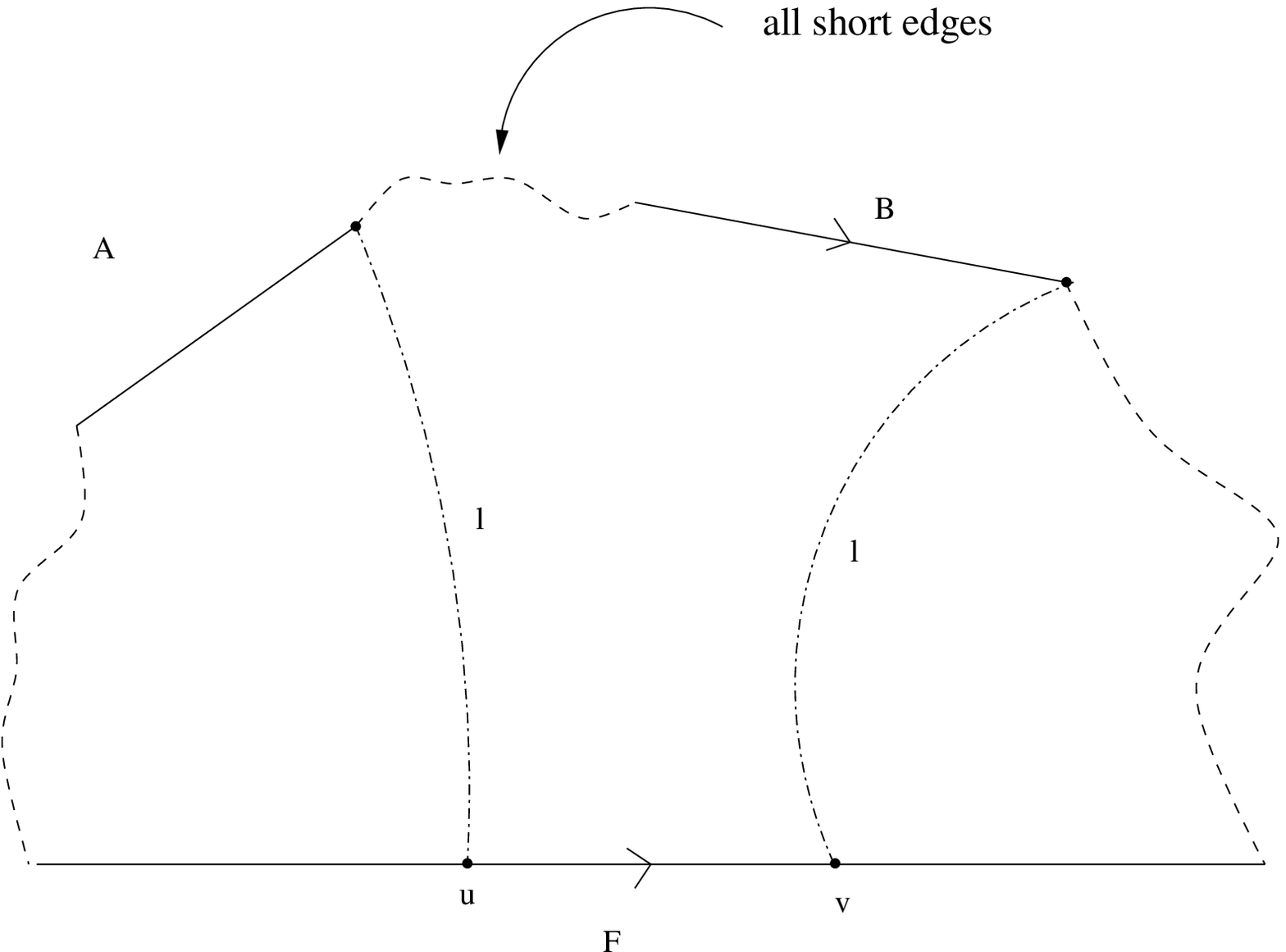}
\\
\vspace{0.38cm}                                                                  
\refstepcounter{figure}\label{apmb}                                             
Figure \thefigure                  
\end{center}
\end{figure} 
\begin{lemma}\label{chouv}
We can choose $u$ and $v$ such that $d(\iota(F),u)<d(\iota(F),v)$.
\end{lemma}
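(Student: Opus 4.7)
The plan is to derive the desired ordering on $F$ from the order-reversal property built into the $\delta$-thin subdivision construction already used in the proof of Lemma \ref{la}, without introducing any new machinery.

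I view the closed path $q := F\cdot\theta(\hat{W})^{-1}$ in $\Gamma_X(H)$, based at $\iota(F)$, as the concatenation of geodesic pieces: the first piece is $F$ itself (playing the role of $\gamma_0$ in Lemma \ref{la}), and the remaining pieces are the labels $\theta(E)^{-1}$ for the letters $E$ of $\hat{W}$, taken in reverse cyclic order. Since $A^{\pm 1}$ lies before $B$ in the sequence of letters of $\hat{W}$ and $\theta(B)$ is nontrivial, along the reverse path $\theta(\hat{W})^{-1}$ the vertex $\tau(\theta(B))$ is visited strictly before $\tau(\theta(A^{\pm 1}))$. Consequently,
\begin{equation*}
d_q(\iota(F),\tau(\theta(B))) \;<\; d_q(\iota(F),\tau(\theta(A^{\pm 1}))).
\end{equation*}

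Next I take $u$ and $v$ to be the vertices on $F$ produced by the $\delta$-thin subdivision construction of Lemma \ref{la} applied to vertices a bounded distance inside $\theta(A^{\pm 1})$ and $\theta(B)$ respectively — this is exactly how the vertex promised by Lemma \ref{lsc} is built (via the paths $s_1,s_2$, or, in case (iii), the slightly adjusted vertex $v_3$). By Lemma \ref{lsc} these vertices do land on $F$, not on a short edge in between nor on $\theta(A^{\pm 1})^{-1}$ or $\theta(B)^{-1}$, and they are within $5l+M+3$ of their respective targets. Lemma \ref{la} part 2 now says that a larger $d_q$-coordinate on the non-$\gamma_0$ pieces corresponds to a smaller $d_F$-coordinate on $\gamma_0=F$; applied to the displayed inequality, this gives $d(\iota(F),u)<d(\iota(F),v)$, as required.

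The only real point to check, and the main ``obstacle,'' is a direction reversal: Lemma \ref{la} constructs an $\eta$ on the non-$\gamma_0$ pieces from a given $\zeta$ on $\gamma_0$, whereas the present argument needs the shadow map to be run from $\eta$ back to $\zeta$. This reversal is immediate from the construction itself — each step of the subdivision path $s_1$ invokes the symmetric $\delta$-thin-triangle inequality, so the same sequence of triangles can be traversed starting from either endpoint; alternatively, one can give a one-line ``$\eta$-to-$\zeta$'' rephrasing of Lemma \ref{la} part 2. Once this bookkeeping is in place, the statement reduces to a direct translation of cyclic order on $q$, and no new geometric input is needed.
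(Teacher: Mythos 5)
There is a genuine gap, and it sits exactly where you flag it: the ``direction reversal'' of Lemma \ref{la} part 2 is not immediate --- it is the entire content of Lemma \ref{chouv}. Lemma \ref{la} part 2 is a one-way statement: starting from two points $\zeta_1,\zeta_2$ \emph{on} $\gamma_0$ it constructs specific companions $\eta_1,\eta_2$ \emph{off} $\gamma_0$ and reverses their order; it says nothing about arbitrary nearby pairs, and nothing about points that start off $\gamma_0$. The vertices $u$ and $v$ of Lemma \ref{lsc} are not produced by a single subdivision with $\gamma_0=F$: they come from two \emph{different} applications of Lemma \ref{la}, one with $\gamma_0=\theta(A^{\pm 1})$ and one with $\gamma_0=\theta(B)$, so there is no common binary tree whose triangles you can ``traverse from the other endpoint.'' Moreover, the descending shadow map is genuinely asymmetric: a point on $q_{b0}$ has its $\delta$-companion on $q_b\cup q_{b1}$, so an ascending path may first descend --- this is precisely why Lemma \ref{lac} needs two cases and only achieves the weaker bound $2\delta(\log_2 n+1)$, and even then it only lands the new vertex somewhere on $q$, not necessarily on $F$.

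The decisive symptom is that your argument never uses the hypothesis that $B$ is a \emph{long} edge ($|\theta(B)|>12l+M+4$), nor the minimality of $(W,\theta)$ in $\mathcal{F}$. The paper's proof is a proof by contradiction: assuming $d(\iota(F),u)\ge d(\iota(F),v)$, it runs through several configurations (the auxiliary vertex $v''$ from Lemma \ref{lac} landing on $F$ or on $\theta(B^{-1})$, the degenerate case $A^{\pm 1}=B^{-1}$, etc.) and in each one uses Lemma \ref{l1}, Lemma \ref{x1x2} and Lemma \ref{lb} to force $|\theta(B)|\le 12l+M+4$, contradicting longness. In other words, the ordering can genuinely fail when the edge is short, so any correct proof must invoke the length threshold somewhere; an argument that concludes the ordering purely from the cyclic position of $B$ after $A^{\pm 1}$ proves too much. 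To repair your approach you would have to show that the putative ``inverse shadow'' of $\tau(\theta(B))$ both lands on $F$ and respects order, and ruling out the bad landings is exactly the case analysis of the paper.
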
  
\begin{proof}
Suppose that  $d(\iota(F),u)\geq d(\iota(F),v)$.
From the proof of Lemma \ref{lsc}, there exist vertices $u'$ and $v'$
lying on $\theta(A^{\pm 1})$ and $\theta(B)$ respectively such that we
have the following inequalities.
\begin{enumerate}\label{state12}
\item $d(u',u),d(v',v)\leq l$;
\item $2l+1\leq d(u',\tau(\theta(A^{\pm 1}))), d(v',\tau(\theta(B)))\leq 4l+M+3$.
\end{enumerate}
In this proof $u$ and $v$ were chosen in $\Gamma _X(H)$ from $u'$ and
$v'$ using part $1.$ of Lemma \ref{la}. Let $q$ be the closed path
labelled by the cyclic word $\theta(\hat{W})F^{-1}$ in $F(X)$ starting
at $\iota(\theta(A^{\pm 1}))$. Now we can
use  Lemma \ref{lac} to show
that there exists another vertex $v''$ on $q$ such that $d(v',v'')\leq 2l$ and
\begin{equation*}
d_q(\iota(\theta(A^{\pm 1})),u)>d_q(\iota(\theta(A^{\pm
  1})),v'')>d_q(\iota(\theta(A^{\pm 1})),u').
\end{equation*}
Since $d(v',\tau(B))\geq 2l+1$, $v''$ must lie on
$F\cup\theta(B^{-1})$ or Lemma
\ref{l1} would be violated. We need to consider two cases.

{\flushleft{{\large{{\bf{Case 1}} ($v''$ lies on $F$)}}}}\\
See Figure \ref{apmb2}.
\begin{figure}[ht]
\begin{center}
\psfrag{A}{{\scriptsize $\theta(A^{\pm 1})$}}
\psfrag{l}{{\scriptsize $\leq l$}}
\psfrag{2l}{{\scriptsize $\leq 2l$}}
\psfrag{B}{{\scriptsize $\theta(B)$}}
\psfrag{u}{{\scriptsize $u$}}
\psfrag{F}{{\scriptsize $F$}}
\psfrag{v}{{\scriptsize $v$}}
\psfrag{v2}{{\scriptsize $v''$}}
\psfrag{v1}{{\scriptsize $v'$}}
\psfrag{u1}{{\scriptsize $u'$}}
\includegraphics[scale=0.4]{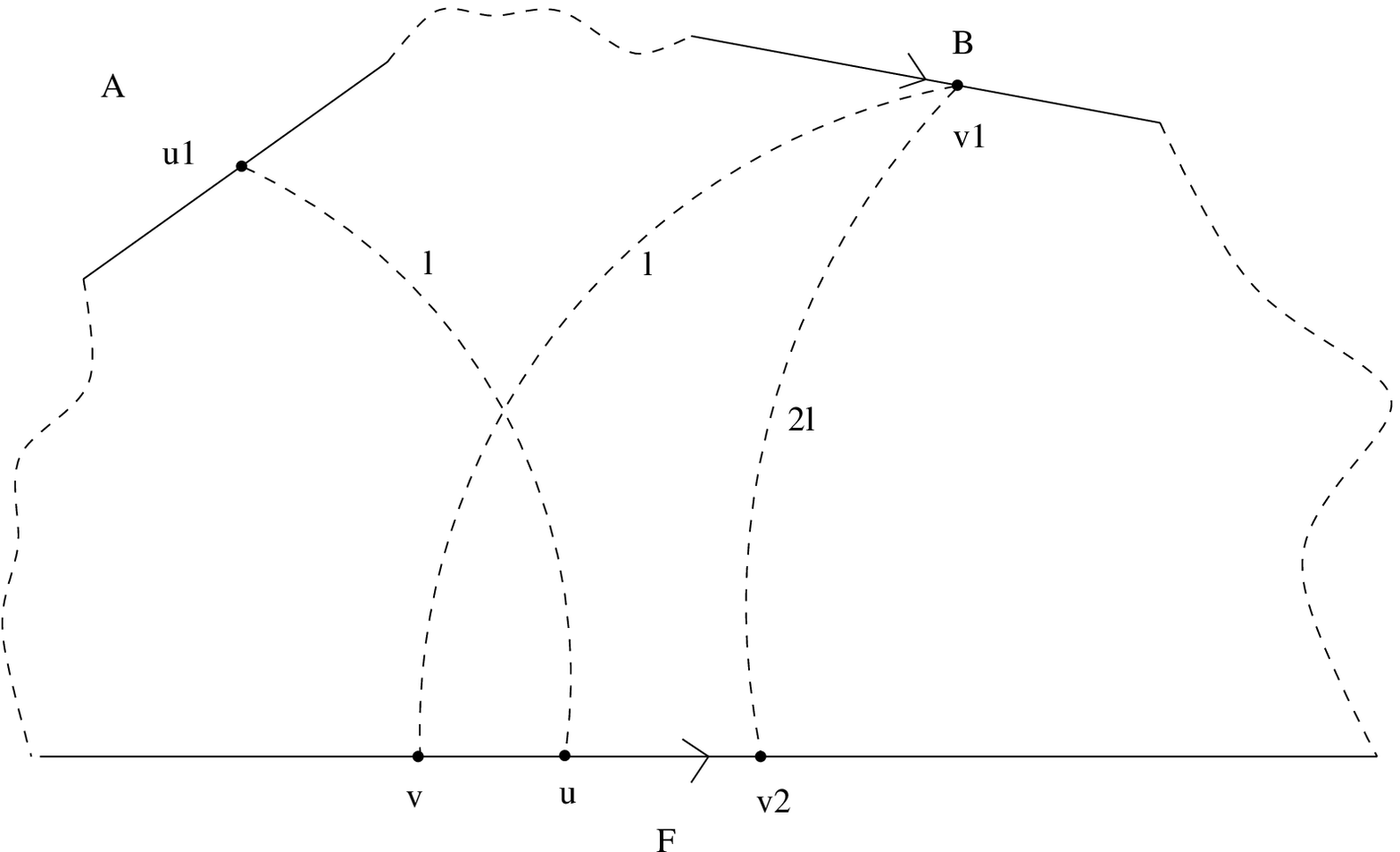}
\\
\vspace{0.38cm}                                                                  
\refstepcounter{figure}\label{apmb2}                                             
Figure \thefigure                  
\end{center}
\end{figure} 
Using the triangle inequality we have the following inequalities.
\begin{eqnarray}
d(v,u)+d(u,v'')&\leq  &3l;\\
d(u',v') &\leq & 2l+d(v,u);\\
d(u',v') &\leq & 3l+d(u,v'').
\end{eqnarray}
We can combine these to give $2d(u',v')\leq 5l+d(v,u)+d(u,v'')\leq 8l$ which
implies that $d(u',v')\leq 4l$.

We need to consider two possibilities. First
suppose that $A^{\pm 1}\neq B^{-1}$. Then by Lemma \ref{l1} it follows
that  $d(\iota(\theta(B)),v')\leq d(u',v')\leq
4l$ and from 2 above 
\begin{equation*}
|\theta(B)|=d(\iota(\theta(B)),v')+d(v',\tau(\theta(B)))\leq 4l+4l+M+3\leq 8l+M+3.
\end{equation*}  
Since $B$ is a long edge this is a contradiction. 

Now suppose that $A^{\pm 1}=B^{-1}$. We can now strengthen statement 2 above to
\begin{eqnarray}
2l+1 \leq d(u',\tau(\theta(B^{-1})))&\leq &4l+M+3\label{aeqb1}\\
\textrm{and}\qquad d(v',\tau(\theta(B)))& = &2l+1\label{aeqb2}.
\end{eqnarray}
We can choose $v'$ to be this vertex since this vertex is within $l$
of a vertex on $F\cup \theta(B^{-1})$(again see Lemmas \ref{la} and
\ref{l1}) and if it lies on $\theta(B^{-1})$  
we would have case (ii) of the proof of Lemma \ref{lsc},
which we have already shown cannot occur. 

Now consider the vertex $u''$ on $\theta(B^{-1})$ such that
$d(\iota(\theta(B^{-1})),u'')= d(\iota(\theta(B^{-1})),u')+1$. If $u$
was chosen by case (iii) of Lemma \ref{lsc} then $u''$ is within $l$ of
a vertex $v'''$ on $\theta(B)$, otherwise $u$ was chosen using case
(i) of Lemma \ref{lsc} and
$d(u'',\tau(\theta(B^{-1})))=2l$. First consider the latter case. Let
$E$ be the first letter after $B^{-1}$ in $\hat{W}$. By
Lemma \ref{l1}, a geodesic path from  $\tau(\theta(B))$ to
$\iota(\theta(E))=\tau(\theta(B^{-1}))$ has length greater than
$|\theta(B)|$. But by the triangle inequality
\begin{eqnarray*}
d(\tau(\theta(B)),\tau(\theta(B^{-1}))) &\leq &
d(\tau(\theta(B)),v')+d(v',u')+d(u',u'')+d(u'',\tau(\theta(B^{-1})))\\
       &\leq & 2l+1+4l+1+2l\\
       &\leq & 8l+2.
\end{eqnarray*}
This implies that $|\theta(B)|\leq 8l+2$, but $B$ is a long edge so
this is a contradiction. Therefore
assume that $u''$ is within $l$ of a vertex $v'''$ on
$\theta(B)$.  Let $p$ be the vertex on $\theta(B)$ such that
$d(\iota(\theta(B),p)=d(\tau(\theta(B^{-1}),u'')$. By Lemma
\ref{x1x2} $d(p,v''')\leq l$. Therefore, $d(p,u'')\leq 2l$.   Thus we
have all  the bounded distances shown in 
Figure \ref{apmb3}. 
\begin{figure}[ht]
\begin{center}
\psfrag{l}{{\scriptsize $\leq l$}}
\psfrag{2l}{{\scriptsize $\leq 2l$}}
\psfrag{4l}{{\scriptsize $\leq 4l$}}
\psfrag{B}{{\scriptsize $\theta(B)$}}
\psfrag{u2}{{\scriptsize $u''$}}
\psfrag{u}{{\scriptsize $u$}}
\psfrag{F}{{\scriptsize $F$}}
\psfrag{v}{{\scriptsize $v$}}
\psfrag{v3}{{\scriptsize $p$}}
\psfrag{v2}{{\scriptsize $v''$}}
\psfrag{v1}{{\scriptsize $v'$}}
\psfrag{u1}{{\scriptsize $u'$}}
\includegraphics[scale=0.5]{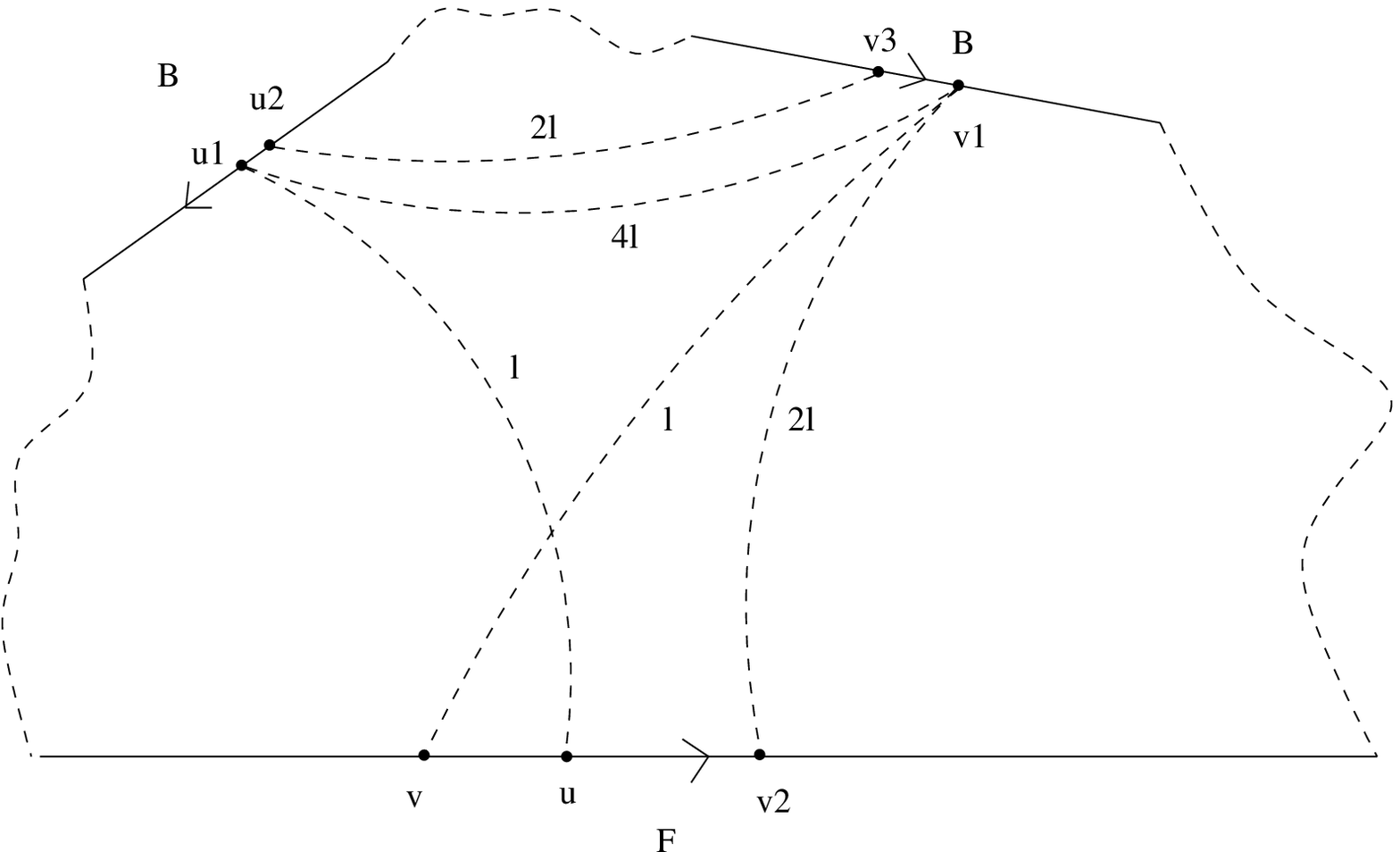}
\\
\vspace{0.38cm}                                                                  
\refstepcounter{figure}\label{apmb3}                                             
Figure \thefigure                  
\end{center}
\end{figure} 
From this it is clear that $d(v',p)\leq 4l+1+2l=6l+1$. Since
$\theta(B)$ is a geodesic path, it follows that
\begin{eqnarray*}
|\theta(B)| & = &d(\iota(\theta(B)),p)+d(p,v')+d(v',\tau(\theta(B)))\\
            & \leq & d(\tau(\theta(B^{-1})),u'')+6l+1+2l+1\\
            & \leq & 4l+M+2+6l+1+2l+1\\
            & \leq & 12l+M+4.
\end{eqnarray*}
But $B$ is a long edge so this cannot occur.
{\flushleft{{\large{{\bf{Case 2}} ($v''$ lies on $\theta(B^{-1})$)}}}}\\
We can see from Lemma \ref{lac} that this case falls into two subcases.
\begin{enumerate}
\item[(i)] $B^{-1}= A^{\pm 1}$ and
  $d(\iota(\theta(B^{-1})),v'')>d(\iota(\theta(B^{-1}),u')$ or
\item[(ii)] $B^{-1}$ lies after $B$ in the sequence of letters of
  $\hat{W}$.
\end{enumerate}
We first consider subcase (i). See Figure \ref{apmb5}.
\begin{figure}[ht]
\begin{center}
\psfrag{A}{{\scriptsize $\theta(A^{\pm 1})$}}
\psfrag{l}{{\scriptsize $\leq l$}}
\psfrag{2l}{{\scriptsize $\leq 2l$}}
\psfrag{B}{{\scriptsize $\theta(B)$}}
\psfrag{u}{{\scriptsize $u$}}
\psfrag{F}{{\scriptsize $F$}}
\psfrag{v}{{\scriptsize $v$}}
\psfrag{v2}{{\scriptsize $v''$}}
\psfrag{v1}{{\scriptsize $v'$}}
\psfrag{u1}{{\scriptsize $u'$}}
\includegraphics[scale=0.4]{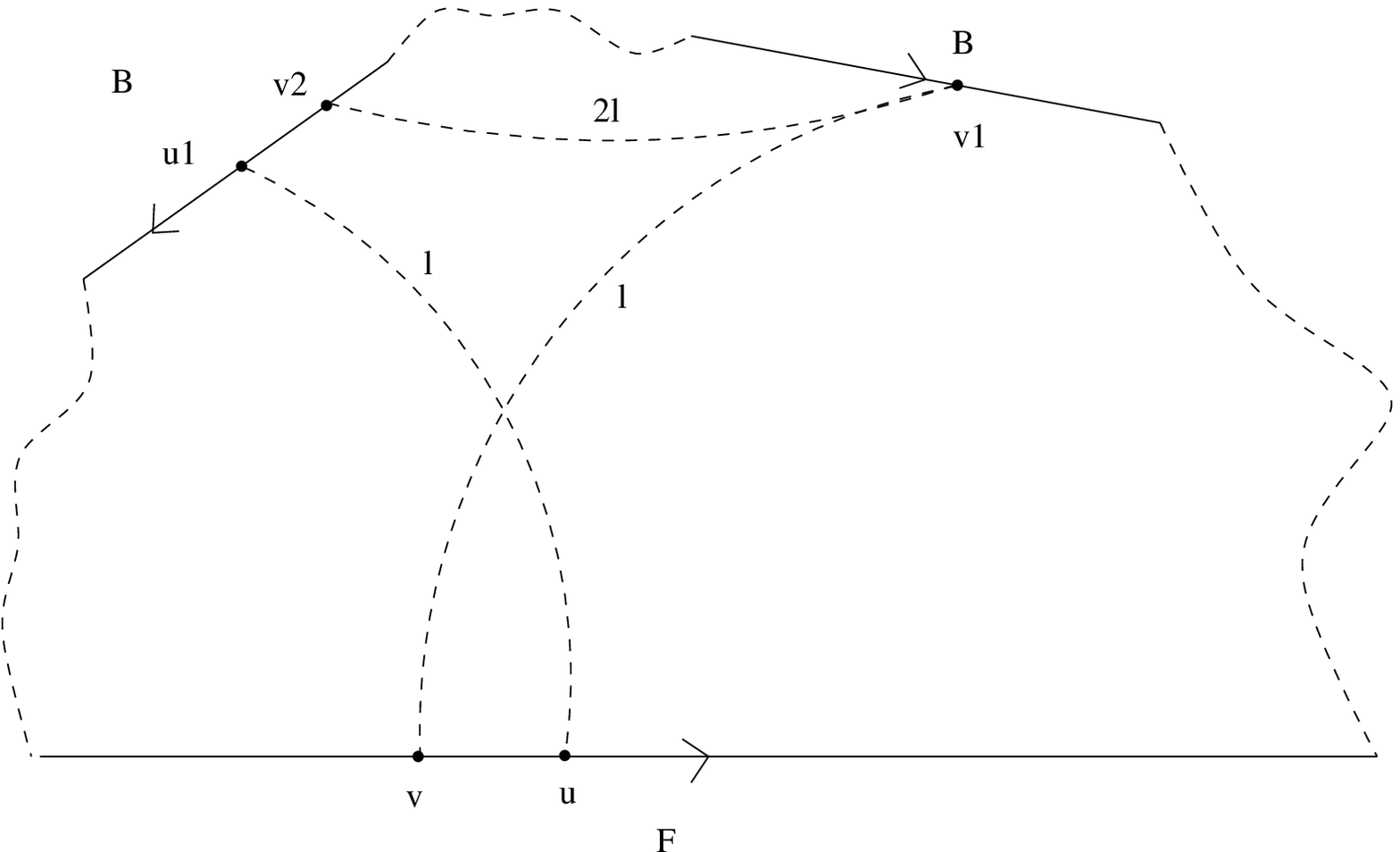}
\\
\vspace{0.38cm}                                                                  
\refstepcounter{figure}\label{apmb5}                                             
Figure \thefigure                  
\end{center}
\end{figure}
As in the previous case with $A^{\pm 1}=B^{-1}$ (see equations
(\ref{aeqb1}) and (\ref{aeqb2})) we can strengthen statement 2 on page
\pageref{state12} to
\begin{eqnarray*}
2l+1 \leq d(u',\tau(\theta(B^{-1})))&\leq &4l+M+3\\
\textrm{and}\qquad d(v',\tau(\theta(B)))& = &2l+1.
\end{eqnarray*} 
It follows that $d(v'',\tau(\theta(B^{-1}))\leq 4l+M+3$. Consider a
geodesic path $w$ from $v'$ to $\tau(\theta(B^{-1}))$. Lemma \ref{l1}
implies that $|w|\geq d(\iota(\theta(B)),v')$ and by the triangle
inequality we have that
\begin{eqnarray*}
|w| &\leq & d(v',v'')+d(v'',\tau(\theta(B^{-1})))\\
    & \leq & 2l+4l+M+3\\
    & = & 6l+M+3.
\end{eqnarray*}
It follows that
\begin{eqnarray*}
|\theta(B)| & = & d(\iota(\theta(B)),v')+d(v',\tau(\theta(B)))\\
            & \leq & 6l+M+3+2l+1\\
            & \leq & 8l+M+4.
\end{eqnarray*}
But $B$ is a long letter so we have a contradiction. Therefore this
subcase can't occur.

Now consider (ii). See Figure  \ref{apmb6}.
\begin{figure}[ht]
\begin{center}
\psfrag{A}{{\scriptsize $\theta(A^{\pm 1})$}}
\psfrag{l}{{\scriptsize $\leq l$}}
\psfrag{2l}{{\scriptsize $\leq 2l$}}
\psfrag{B}{{\scriptsize $\theta(B)$}}
\psfrag{u}{{\scriptsize $u$}}
\psfrag{F}{{\scriptsize $F$}}
\psfrag{v}{{\scriptsize $v$}}
\psfrag{v2}{{\scriptsize $v''$}}
\psfrag{v1}{{\scriptsize $v'$}}
\psfrag{u1}{{\scriptsize $u'$}}
\includegraphics[scale=0.4]{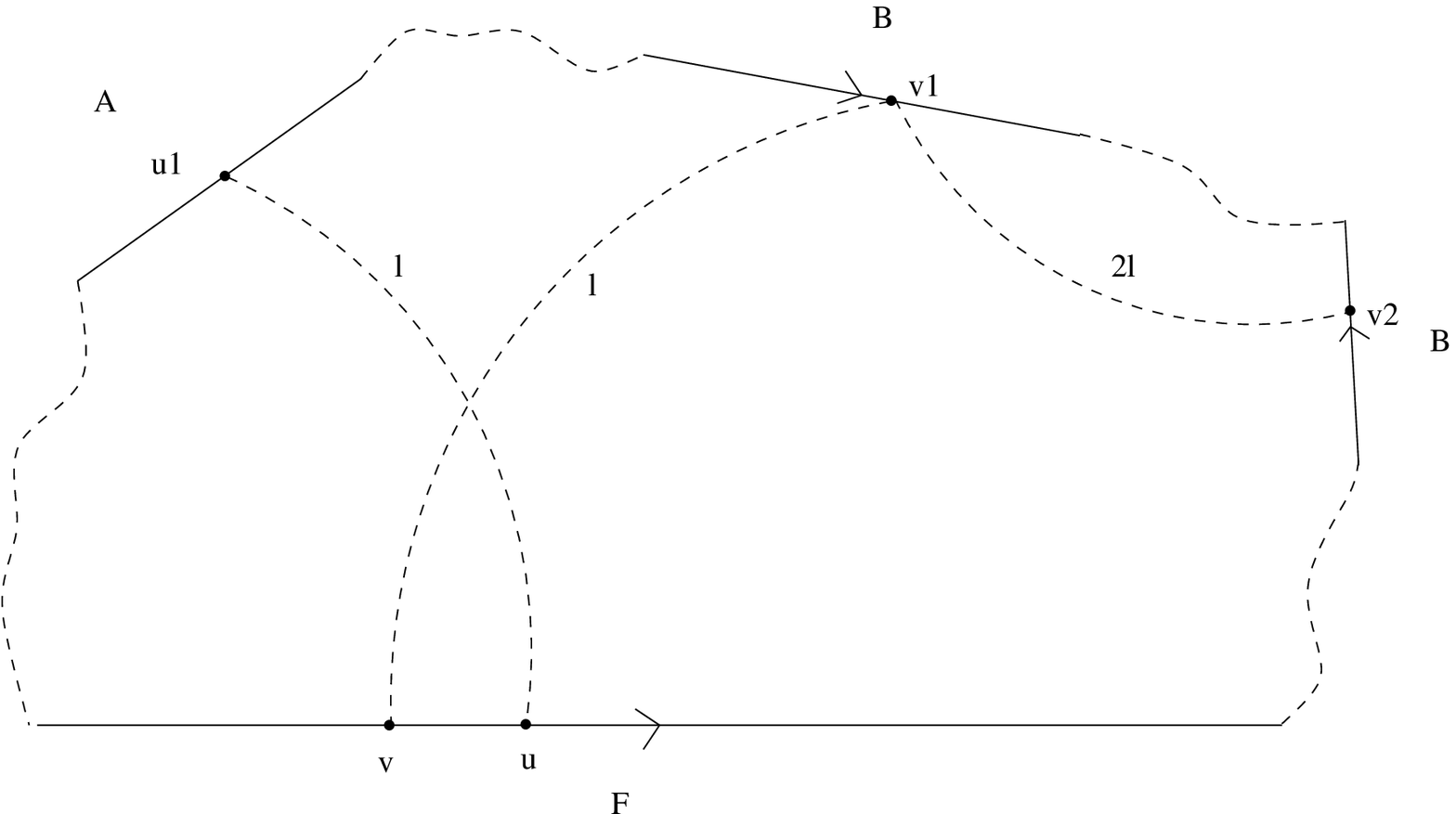}
\\
\vspace{0.38cm}                                                                  
\refstepcounter{figure}\label{apmb6}                                             
Figure \thefigure                  
\end{center}
\end{figure}
Consider the vertex $p$ which lies on $\theta(B)$ such that
$d(\iota(\theta(B)),p)=2l+1$. Let $q'$ be the closed path labelled by
the cyclic word $\theta(\hat{W})^{-1}F$ in $F(X)$ starting at
$\tau(\theta(B))$. Lemma \ref{la} part 1 and part 2 imply that there
is a vertex $p'$ on the closed path $q'$ such that
\begin{eqnarray}
d(p,p') & \leq & l \textrm{ and}\nonumber \\
d_{q'}(\tau(\theta(B)),v) & > &
d_{q'}(\tau(\theta(B)),p').\label{case2eq}
\end{eqnarray}  
It follows from Lemma \ref{l1} that $p'$ must lie on $F$ (It can't lie
on $\theta(B^{-1})$ or equation (\ref{case2eq}) would not hold). This
also means that we have part 3 of Lemma \ref{la}, i.e.
\begin{equation}
d(p',v)=d(p,v').\label{case2eq2}
\end{equation}
See Figure \ref{ampb7}.
\begin{figure}[ht]
\begin{center}
\psfrag{A}{{\scriptsize $\theta(A^{\pm 1})$}}
\psfrag{l}{{\scriptsize $\leq l$}}
\psfrag{2l}{{\scriptsize $\leq 2l$}}
\psfrag{B}{{\scriptsize $\theta(B)$}}
\psfrag{u}{{\scriptsize $u$}}
\psfrag{F}{{\scriptsize $F$}}
\psfrag{v}{{\scriptsize $v$}}
\psfrag{v2}{{\scriptsize $v''$}}
\psfrag{v1}{{\scriptsize $v'$}}
\psfrag{u1}{{\scriptsize $u'$}}
\psfrag{p}{{\scriptsize $p$}}
\psfrag{p1}{{\scriptsize $p'$}}
\includegraphics[scale=0.4]{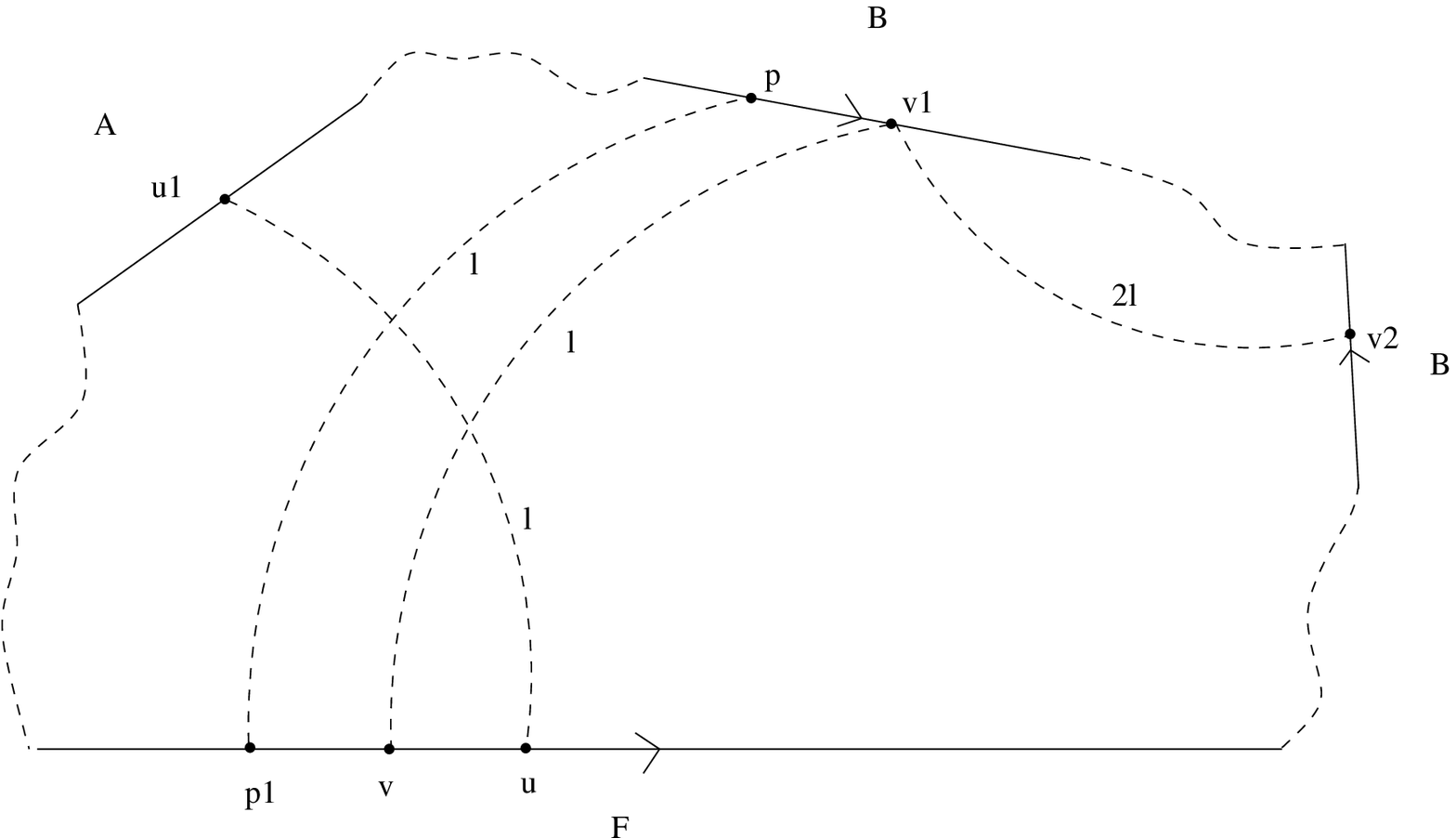}
\\
\vspace{0.38cm}                                                                  
\refstepcounter{figure}\label{ampb7}                                             
Figure \thefigure                  
\end{center}
\end{figure}
Now consider the closed path $q$ again. Lemma \ref{lac} implies that
there exists a vertex $p''$ on $q$ such that $d(p,p'')\leq 2l$ and
\begin{equation*}
d_q(\iota(\theta(A^{pm 1})),u)>d_q(\iota(\theta(A^{pm 1})),p'')>  
d_q(\iota(\theta(A^{pm 1})),u').
\end{equation*}
Since $d(p,\iota(\theta(B)))=2l+1$ and $d(p,\tau(\theta(B)))>2l+1$, it
follows from Lemma \ref{l1} that $p''$ lies on either $F$ or
$\theta(B^{-1})$. First consider the case where $p''$ lies on $F$. We
have the bounded distances as shown in Figure \ref{ampb8}.
\begin{figure}[ht]
\begin{center}
\psfrag{A}{{\scriptsize $\theta(A^{\pm 1})$}}
\psfrag{l}{{\scriptsize $\leq l$}}
\psfrag{2l}{{\scriptsize $\leq 2l$}}
\psfrag{B}{{\scriptsize $\theta(B)$}}
\psfrag{u}{{\scriptsize $u$}}
\psfrag{F}{{\scriptsize $F$}}
\psfrag{v}{{\scriptsize $v$}}
\psfrag{v2}{{\scriptsize $v''$}}
\psfrag{v1}{{\scriptsize $v'$}}
\psfrag{u1}{{\scriptsize $u'$}}
\psfrag{p}{{\scriptsize $p$}}
\psfrag{p1}{{\scriptsize $p'$}}
\psfrag{p2}{{\scriptsize $p''$}}
\includegraphics[scale=0.4]{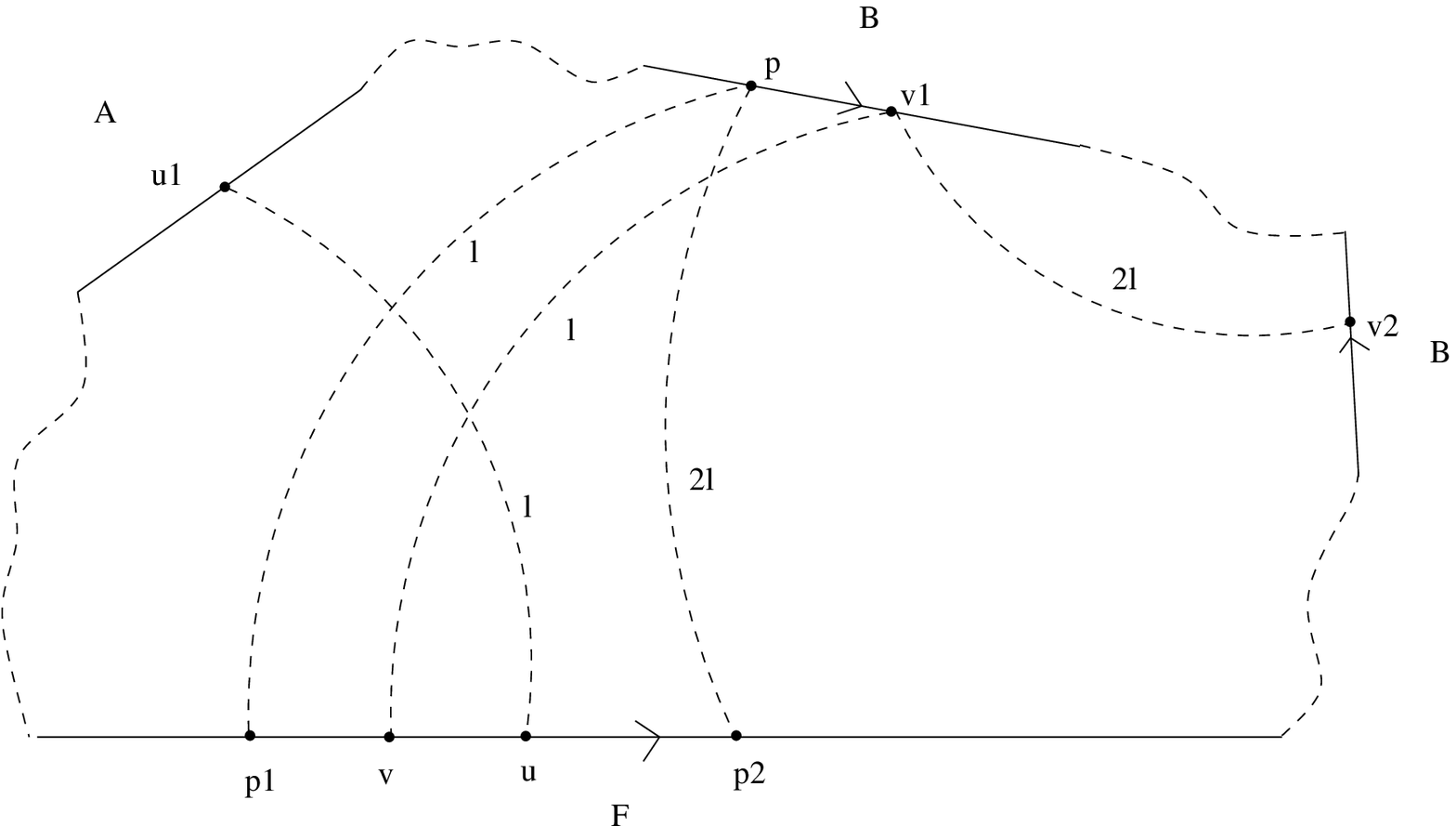}
\\
\vspace{0.38cm}                                                                  
\refstepcounter{figure}\label{ampb8}                                             
Figure \thefigure                  
\end{center}
\end{figure}
Using the triangle inequality we have the following equations.
\begin{eqnarray*}
d(p',v)+d(v,u)+d(u,p'') & \leq & 3l,\\
d(u',v') & \leq & d(v,u)+2l,\\
d(u',v') &\leq & d(u,p'')+3l+d(p,v').
\end{eqnarray*}
It follows from these equations and equation (\ref{case2eq2}) that
\begin{eqnarray*}
2d(u',v') & \leq & d(u,p'')+3l+d(p,v')+d(v,u)+2l\\
          & = & d(u,p'')+3l+d(p',v)+d(v,u)+2l\\
          & \leq & 8l\\
\implies\qquad d(u',v') &\leq & 4l.
\end{eqnarray*}
Lemma \ref{l1} implies that $d(v',\iota(\theta(B)))\leq d(v',u')\leq
4l$. Thus 
\begin{equation*}
|\theta(B)|=d(\iota(\theta(B)),v')+d(v',\tau(\theta(B)))\leq 4l +4l+M+3.
\end{equation*}
But $B$ is a long letter so we have a contradiction.

Finally we need to consider the case where $p''$ lies on
$\theta(B^{-1})$. We have the bounded distances as shown in Figure \ref{ampb9}.
\begin{figure}[ht]
\begin{center}
\psfrag{A}{{\scriptsize $\theta(A^{\pm 1})$}}
\psfrag{l}{{\scriptsize $\leq l$}}
\psfrag{2l}{{\scriptsize $\leq 2l$}}
\psfrag{B}{{\scriptsize $\theta(B)$}}
\psfrag{u}{{\scriptsize $u$}}
\psfrag{F}{{\scriptsize $F$}}
\psfrag{v}{{\scriptsize $v$}}
\psfrag{v2}{{\scriptsize $v''$}}
\psfrag{v1}{{\scriptsize $v'$}}
\psfrag{u1}{{\scriptsize $u'$}}
\psfrag{p}{{\scriptsize $p$}}
\psfrag{p1}{{\scriptsize $p'$}}
\psfrag{p2}{{\scriptsize $p''$}}
\includegraphics[scale=0.4]{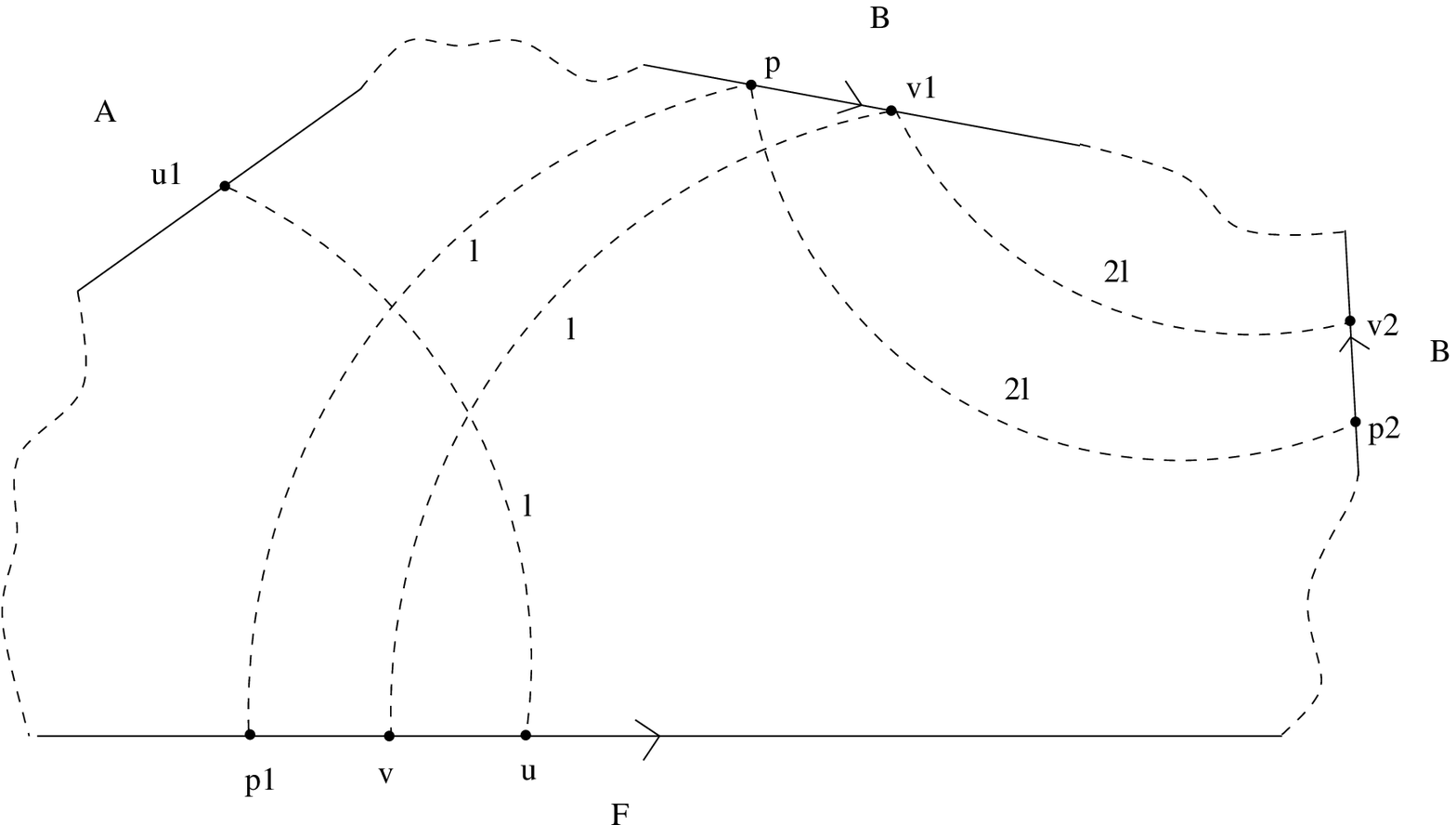}
\\
\vspace{0.38cm}                                                                  
\refstepcounter{figure}\label{ampb9}                                             
Figure \thefigure                  
\end{center}
\end{figure}
We know by the choice of vertex $p$ that
$d(\iota(\theta(B)),p)=2l+1$. The vertex  $v'$ was chosen from either
case (i) or case (iii) of Lemma \ref{lsc}. If it was chosen by case (i)
then $d(v',\tau(\theta(B)))=2l+1$. If it was chosen by case (iii) then
there exists a vertex $v'''$ on $\theta(B)$ such that
$d(v''',\tau(\theta(B)))=2l+1$ and $v'''$ is within a distance $l$ of
a vertex on $\theta(B^{-1})$. Either way there is vertex $x$ which is on the
$\theta(B)$ at a distance $2l+1$ from $\tau(\theta(B))$ which is within $2l$
of a vertex $x'$ on $\theta(B^{-1})$. See Figure \ref{xyx1y1}. 
\begin{figure}[ht]
\begin{center}
\psfrag{2l+1}{{\scriptsize $2l+1$}}
\psfrag{2l}{{\scriptsize $\leq 2l$}}
\psfrag{B}{{\scriptsize $\theta(B)$}}
\psfrag{x}{{\scriptsize $x$}}
\psfrag{x1}{{\scriptsize $x'$}}
\psfrag{y}{{\scriptsize $p$}}
\psfrag{y1}{{\scriptsize $p''$}}
\includegraphics[scale=0.4]{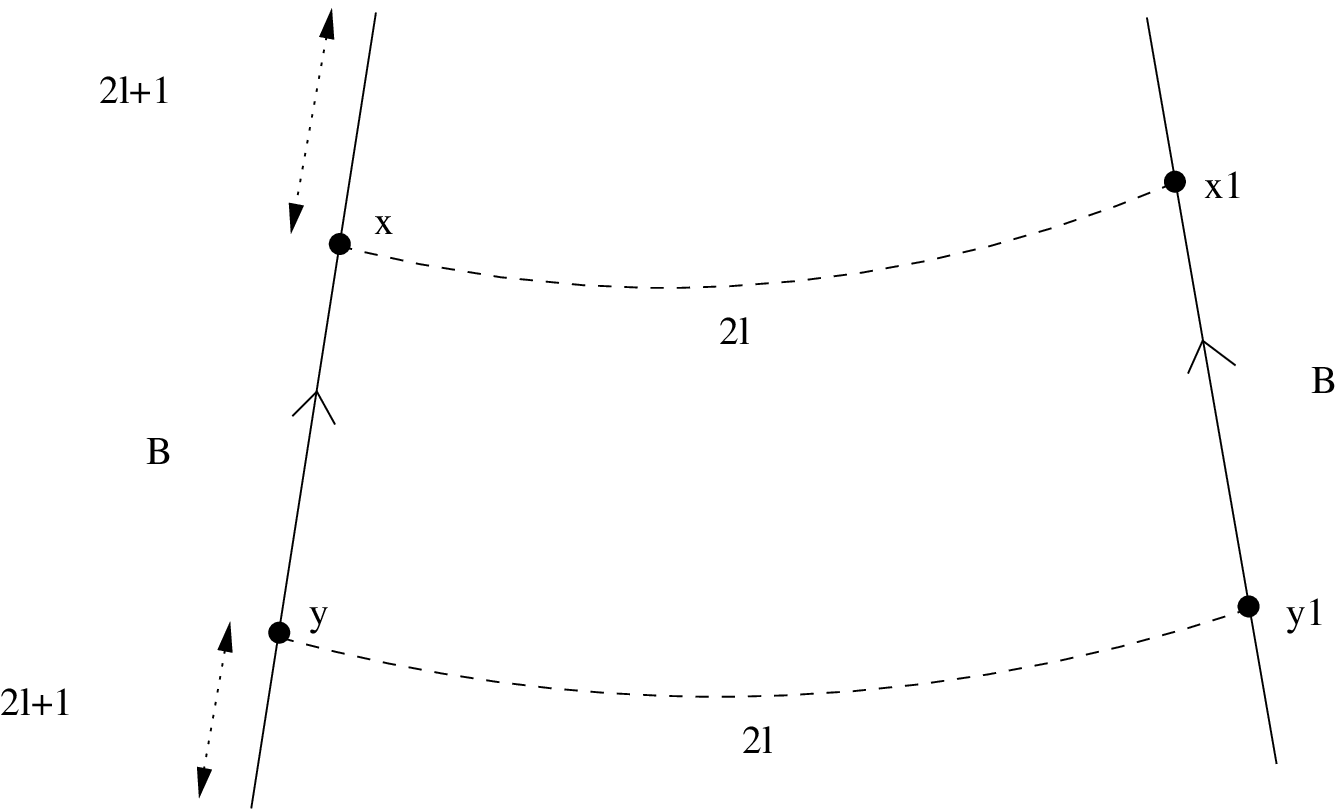}
\\
\vspace{0.38cm}                                                                  
\refstepcounter{figure}\label{xyx1y1}                                             
Figure \thefigure                  
\end{center}
\end{figure}
It follows from Lemma \ref{x1x2} that there exist vertices $x''$ and
$p'''$ lying on $\theta(B^{-1})$ such that we have the following
equations.
\begin{eqnarray}
d(x,\tau(\theta(B))) & = &d(x'',\iota(\theta(B^{-1}))),\nonumber\\
d(p,\tau(\theta(B))) & =& d(p''',\iota(\theta(B^{-1}))),\nonumber\\
d(x',x'') &\leq & 2l, \label{x'x''}\\
d(p'',p''') &\leq & 2l.\label{p''p'''}
\end{eqnarray} 
Equations (\ref{x'x''}) and (\ref{p''p'''}) imply that $d(x,x'')\leq
4l$ and $d(p,p''')\leq 4l$. See Figure \ref{xyx2y2}. 
\begin{figure}[ht]
\begin{center}
\psfrag{2l+1}{{\scriptsize $2l+1$}}
\psfrag{2l}{{\scriptsize $\leq 4l$}}
\psfrag{B}{{\scriptsize $\theta(B)$}}
\psfrag{x}{{\scriptsize $x$}}
\psfrag{x1}{{\scriptsize $x'$}}
\psfrag{y}{{\scriptsize $p$}}
\psfrag{y1}{{\scriptsize $p'''$}}
\includegraphics[scale=0.4]{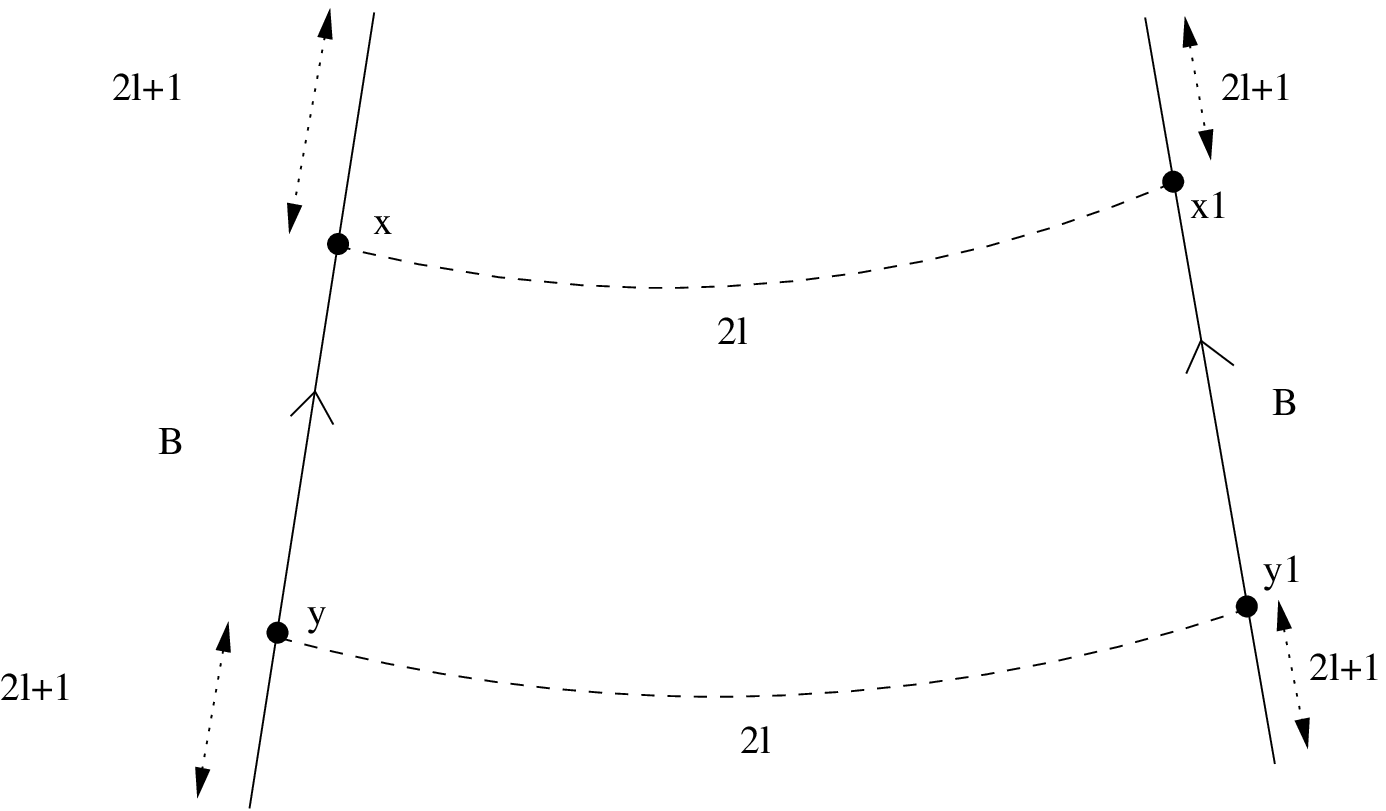}
\\
\vspace{0.38cm}                                                                  
\refstepcounter{figure}\label{xyx2y2}                                             
Figure \thefigure                  
\end{center}
\end{figure}
By Lemma \ref{lb} we have that
\begin{eqnarray*}
d(x,p)&\leq& \frac{1}{2}(4l+4l)+M+1\\
      &=& 4l+M+1.
\end{eqnarray*}
It follows that $|\theta(B)|\leq 2l+1+2l+1+4l+M+1\leq 8l+M+3$. But $B$
is a long letter so we have a contradiction. Therefore this case can't
occur.

Hence we have the required result.
\end{proof}
We shall label the segment of $F$ between $u$ and $v$ by $B_1$ and a
geodesic path from $\tau(\theta(B))$ to $v$ by $b_1$. If we
were considering $B^{-1}$ we would label the appropriate paths $B_2$
and $b_2$ respectively. Remember
the last letter of $\hat{W}$ was chosen to be long, thus if $B$ is
the last letter then $v=\tau(\theta(B))$ and $|b_1|=0$. We shall do
this for every long edge in $\hat{W}$.
\begin{example}
Suppose that $\hat{W}=ABCA^{-1}B^{-1}C^{-1}$ where $A$ and $C$ are
long edges and $B$ is a short edge. Then we label paths in
$\Gamma_X(H)$ as shown in Figure \ref{exABC}.
\begin{figure}[ht]
\begin{center}
\psfrag{A}{{\scriptsize $\theta(A)$}}
\psfrag{C}{{\scriptsize $\theta(C)$}}
\psfrag{B}{{\scriptsize $\theta(B)$}}
\psfrag{a1}{{\scriptsize $a_1$}}
\psfrag{c1}{{\scriptsize $c_1$}}
\psfrag{a2}{{\scriptsize $a_2$}}
\psfrag{A1}{{\scriptsize $A_1$}}
\psfrag{C1}{{\scriptsize $C_1$}}
\psfrag{A2}{{\scriptsize $A_2$}}
\psfrag{C2}{{\scriptsize $C_2$}}
\includegraphics[scale=0.45]{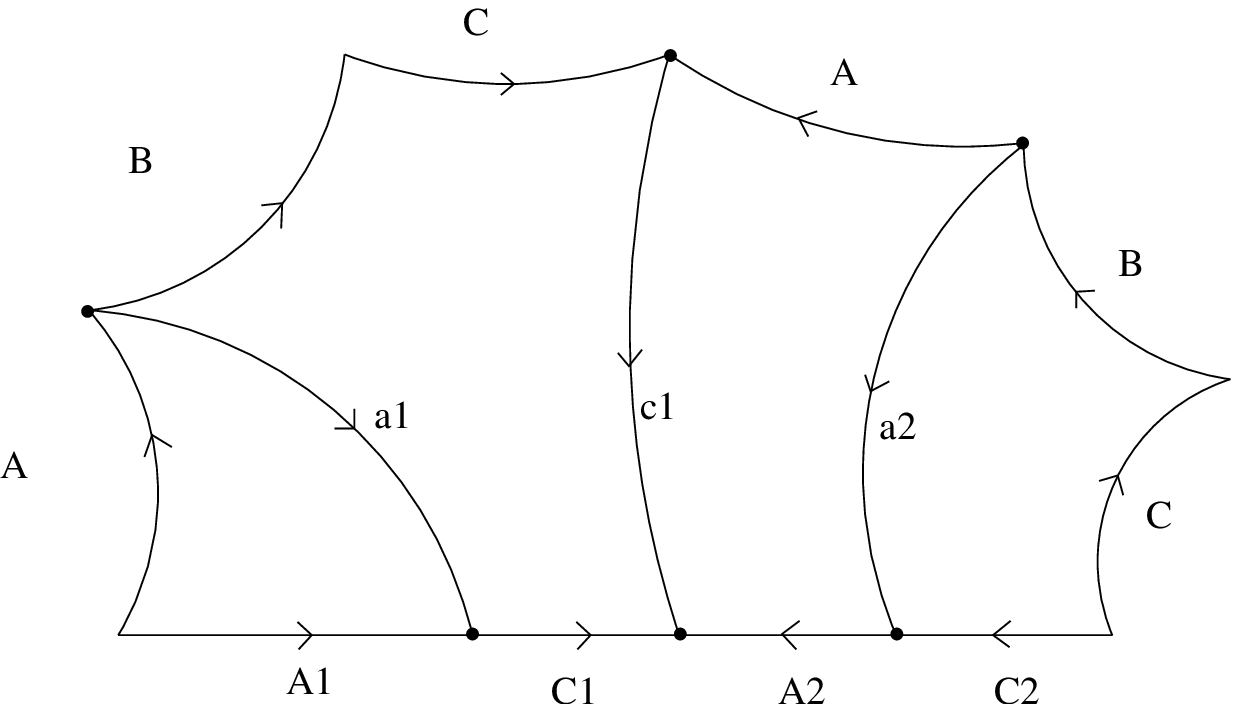}
\\
\vspace{0.38cm}                                                                  
\refstepcounter{figure}\label{exABC}                                             
Figure \thefigure                  
\end{center}
\end{figure}  
Where $|a_1|$, $|a_2|$ and $|c_1|$ are at most $5l+M+3$ and $|c_2|=0$.
\end{example} 
Let us write $\hat{W}$ around the boundary of a disc (i.e.~divide the
boundary up into $|\hat{W}|$ segments, assigning a letter to each) and identify the
long edges, respecting orientation. We obtain a surface $S$ of genus
$k\leq n$ with $Q$ holes. The boundary of the disc becomes a graph on
this surface, we shall denote this graph $\Gamma_S$. This graph
consists of  short edges all of which are written
around the boundary components and long edges all of which are properly embedded on
the surface. 

Separate the cyclic words in $\mathcal{A}^{\pm 1}$ written
around the $Q$ boundary components into $p$ disjoint sets,
$B_1,\ldots, B_p$, with $p$ as large as possible as follows. Each set $B_i$
contains some $t_i$-tuple of cyclic words $(W_1^i,\ldots ,W_{t_i}^i)$, where
$W_j^i$ is the path written around a boundary component of $S$,  such that if a
small edge
 $A$ of $\hat{W}$ appears in $W_j^i$, for some $j=1,\ldots ,t_i$
then  $A^{-1}$ appears in $W_k^i$ for some $k=1,\ldots ,t_i$. We shall
define the
genus of $B_i$ to be equal to  $g_i$, for $i=1,\ldots ,p$ , where
$g_i-t_i+1=genus_H(\theta(W_1^i),\theta(W_2^i),\ldots ,\theta(W_{t_i}^i))$.
\begin{lemma}\label{gi} 
Let $g=\sum _{i=1}^{p} g_i$.
For each $B_i$ containing a $t_i$-tuple $(W_1^i,\ldots
,W_{t_i}^i)$ we have
\begin{equation*}
genus_H(\theta(W_1^i),\theta(W_2^i),\ldots ,\theta(W_{t_i}^i))=genus_{F(\mathcal{A})}(W_1^i,\ldots
  ,W_{t_i}^i),\quad \textrm{for}\,\,i=1,\ldots ,p.
\end{equation*}
It follows that $g+k=n$.
\end{lemma}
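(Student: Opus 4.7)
The plan is to establish, for each $i$, the equality $genus_H(\theta(W_1^i),\ldots,\theta(W_{t_i}^i)) = genus_{F(\mathcal{A})}(W_1^i,\ldots,W_{t_i}^i)$, and then to derive $g+k=n$ by an Euler characteristic count on the closed genus $n$ surface built from $\hat{W}$. Writing $\gamma_i^H$ and $\gamma_i^F$ for the two genera, the inequality $\gamma_i^H\leq\gamma_i^F$ is immediate: applying the homomorphism $\theta$ to any witness $h_1 W_1^i h_1^{-1}\cdots h_{t_i} W_{t_i}^i h_{t_i}^{-1} = \prod_{j=1}^{\gamma_i^F}[x_j,y_j]$ in $F(\mathcal{A})$ yields an equality of the same form in $F(X)$ and hence in $H$.

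For the reverse inequality I would argue by contradiction. Suppose $\gamma_{i_0}^H<\gamma_{i_0}^F$ for some index $i_0$, and fix words $h_1,\ldots,h_{t_{i_0}},x_j,y_j\in F(X)$ witnessing
\begin{equation*}
h_1\theta(W_1^{i_0})h_1^{-1}\cdots h_{t_{i_0}}\theta(W_{t_{i_0}}^{i_0})h_{t_{i_0}}^{-1} =_H \prod_{j=1}^{\gamma_{i_0}^H}[x_j,y_j].
\end{equation*}
I would then construct a new orientable word $\tilde{W}$ in $\mathcal{A}^{\pm 1}$ by excising from $\Gamma_{\hat{W}}$ the combinatorial subsurface of genus $\gamma_{i_0}^F$ filling the $t_{i_0}$ boundary components indexed by $B_{i_0}$, and gluing in its place the standard subsurface of genus $\gamma_{i_0}^H$ with the same boundary cycles, using fresh letters $H_1,\ldots,H_{t_{i_0}},X_1,Y_1,\ldots,X_{\gamma_{i_0}^H},Y_{\gamma_{i_0}^H}$ of $\mathcal{A}$. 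Extending $\theta$ to $\tilde\theta$ by $\tilde\theta(H_l)=h_l$, $\tilde\theta(X_j)=x_j$, $\tilde\theta(Y_j)=y_j$, the new subsurface relation is, by the choice of the $h_l,x_j,y_j$, an identity in $H$, so $\tilde\theta(\tilde{W})$ is conjugate in $H$ to $\theta(\hat{W})$ and hence to $h$. After cyclic reduction and removal of any redundant pairs, $\tilde{W}$ becomes a Wicks form of genus at most $k + \sum_{i\neq i_0}(\gamma_i^F+t_i-1) + (\gamma_{i_0}^H+t_{i_0}-1)$, which is strictly less than $n$, contradicting the remark in the proof of Theorem \ref{Thexp} that no Wicks form of genus below $n$ labels to a conjugate of $h$.

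Combining the two inequalities gives $\gamma_i^H=\gamma_i^F=:\gamma_i$, so the paper's definition $g_i=\gamma_i^H+t_i-1$ becomes $g_i=\gamma_i+t_i-1$. The closed surface of genus $n$ associated to $\hat{W}$ decomposes as $S$ (of genus $k$ with $Q$ boundary components) capped, for each $i$, by a genus $\gamma_i$ subsurface with $t_i$ boundary components; capping $t_i$ holes by such a subsurface raises the total genus by $\gamma_i+t_i-1$, so an Euler characteristic computation gives
\begin{equation*}
n = k + \sum_{i=1}^p(\gamma_i+t_i-1) = k + \sum_{i=1}^p g_i = k + g.
\end{equation*}
The main obstacle will be the second step, where one must make the combinatorial surgery explicit enough that $\tilde{W}$ is a genuine orientable quadratic word (each new letter appearing exactly twice with opposite signs), its reduction is a Wicks form of strictly smaller genus, and $\tilde\theta(\tilde{W})$ really is conjugate to $h$ in $H$. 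The delicate point is to preserve the pairing of the short edges of $B_{i_0}$ and the regular Eulerian structure of the Hamiltonian cycle while faithfully swapping in the shorter $H$-relation supplied by the $h_l,x_j,y_j$.
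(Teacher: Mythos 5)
Your proposal is correct and follows essentially the same route as the paper: the easy inequality $genus_H\leq genus_{F(\mathcal{A})}$ by applying $\theta$ to a free-group witness, the reverse inequality by contradiction with $genus_H(h)=n$ via a surface surgery, and the Euler characteristic count $n=k+\sum_i(\gamma_i+t_i-1)$. The only difference is one of explicitness: the paper compresses your entire excision-and-regluing construction into the single asserted line $genus_H(h)=genus_H(\theta(W))\leq k+\sum g_i<n$, whereas you spell out (and rightly flag as the delicate point) the combinatorial surgery that justifies it.
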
  
\begin{proof}
Suppose that this is not the case. Clearly the genus of $(W_1^i,\ldots
,W_{t_i}^i)$ in
$F(\mathcal{A})$, which we shall denote $h_i-t_i+1$, is at least $g_i-t_i+1$ for
all $i=1,\ldots ,p$.  
Therefore assume that $h_j>g_j$ for some
$1\leq j\leq p$. Now, since $W$ has
genus  $n$ in
$F(\mathcal{A})$, if we identify all
the  short edges on the genus $k$ surface $S$, respecting orientation,
we obtain a closed compact surface of genus $n$.  Therefore,
\begin{eqnarray*}
n &=&k+\sum_{i=1}^{p}h_i\\
& > &k+\sum_{i=1}^{p}g_i.
\end{eqnarray*}
But this implies that $genus_H(h)=genus_H(\theta(W))\leq
k+\sum_{i=1}^{p}g_i<n$, a contradiction. Thus $g_i=h_i$ for all
$i=1,\ldots ,p$  and
$n=k+\sum g_i =k+g$.  Hence the lemma holds.   
\end{proof}
Consider the surface $S$ with $Q$ holes with the embedded graph
$\Gamma_S$ consisting of  long and
short edges. If we paste a disc onto each of the $Q$
boundary components and contract the cyclic word of short edges to a
point, we obtain a graph of genus $k$ consisting  of long edges only
on a closed  compact surface
of genus $k$. The orientable word $U$ associated with this graph is
obviously the word obtained by setting all the short edges of $W$ to
$1$. As usual we denote this graph $\Gamma_U$. We should note that, by
the
way the
geodesic path labelled by $F$ in $\Gamma_X(H)$ has been cut up into
segments,  the cyclic sequence of letters of $U$ gives  the
cyclic sequence of segments of $F$ by replacing a letter $E$ of
$U$ with $E_1$ for its occurrence with exponent $1$ and $E_2^{-1}$ for
its occurrence with exponent $-1$. 
We now show that an extension over $H$  may be carried out on the
orientable word $U$, so that part $2$ of the
theorem holds.\label{edw} 
\subsection{Extension of $U$}\label{stext}
First we shall construct $p+1$ sets, $C_0,C_1,\ldots ,C_p$,  of cyclic
words in $F(X)$  such that each vertex of
$\Gamma_U$ will be extended by a unique element from one of these sets.
Consider a cyclic word $W_j^i\in B_i$, $1\leq j\leq t_i$ and $1\leq
i\leq p$, in the graph $\Gamma_S$. Thus, if we are thinking of
$\Gamma_S$ as being embedded in $S$, $W_j^i$ is a word in the short
edges written around a boundary component of the surface. Suppose that there
are $d$ end points of long edges lying on this boundary component. Let $E^1,\ldots
,E^d$ be the long edges. See Figure \ref{eandws}.
 \begin{figure}[ht]
\begin{center}
\psfrag{E1}{{\scriptsize $E^1$}}
\psfrag{E2}{{\scriptsize $E^2$}}
\psfrag{E3}{{\scriptsize $E^3$}}
\psfrag{Ee}{{\scriptsize $E^{d-1}$}}
\psfrag{Ed}{{\scriptsize $E^d$}}
\psfrag{w1}{{\scriptsize $W_{j_1}^i$}}
\psfrag{w2}{{\scriptsize $W_{j_2}^i$}}
\psfrag{we}{{\scriptsize $W_{j_{d-1}}^i$}}
\psfrag{wd}{{\scriptsize $W_{j_{d}}^i$}}
\includegraphics[scale=0.5]{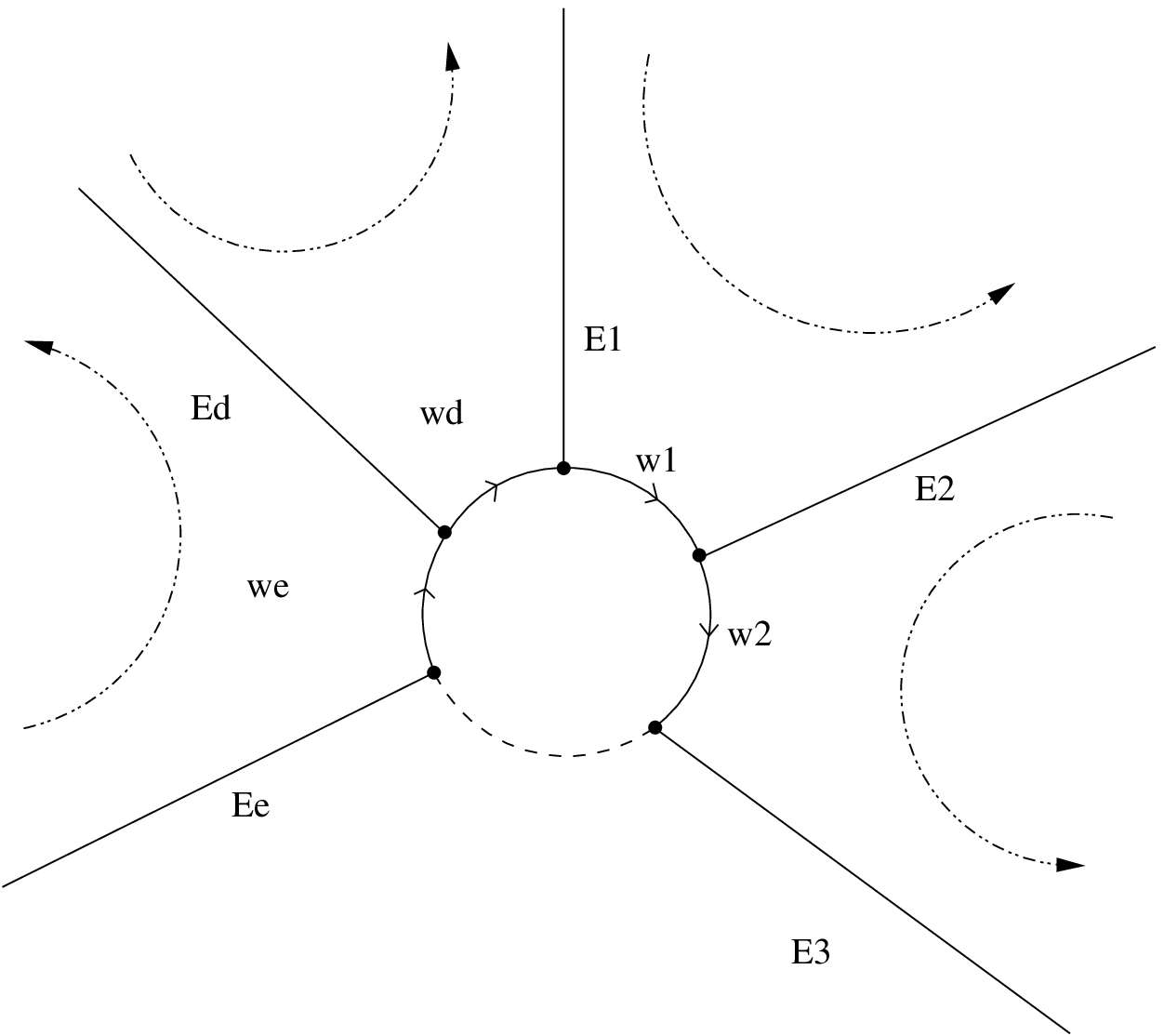}
\\
\vspace{0.38cm}                                                                  
\refstepcounter{figure}\label{eandws}                                             
Figure \thefigure                  
\end{center}
\end{figure}   
Here  $W_j^i=W_{j_1}^i\ldots W_{j_d}^i$, where each $W_{j_k}$
is a subword which may or may not have length zero. Also, note that the $E's$
are not necessarily distinct (both end points may lie on the same boundary component).

Since $U$ is an orientable word, each vertex of $\Gamma_U$ is
regular. Therefore, since $\Gamma_U$ is obtained from $\Gamma_S$ by
pasting each boundary component of the surface $S$ with a disc and
contracting the edges around the boundary components to a point, we may
think of the boundary components also as being `regular'. That is, 
we may renumber the long edges, as shown
in the diagram,  such that
the following cyclic subwords appear in $W$.
\begin{equation*}
(E^1)^{\varepsilon _1}W_{j_1}^i(E^2)^{-\varepsilon _2},
(E^2)^{\varepsilon _2}W_{j_2}^i(E^3)^{-\varepsilon _3}, \ldots
,(E^{d-1})^{\varepsilon _{d-1}}W_{j_{d-1}}^i(E^d)^{-\varepsilon _d}, 
(E^{d})^{\varepsilon _{d}}W_{j_{d}}^i(E^1)^{-\varepsilon _1},
\end{equation*} 
where $\varepsilon _k=\pm 1$, for all $k=1,\ldots ,d$. Now we have
already shown that, for each long edge $E$ in $\hat{W}$, there exists
 words, $e_1$ and $e_2$, in $F(X)$ of length at most $5l+M+3$ such
 that, in the Cayley graph $\Gamma_X(H)$, $e_1$ and $e_2$ label  geodesic paths
 from $\tau(\theta(E))$  and $\tau(\theta(E^{-1}))$  respectively to
 vertices  on the  geodesic path labelled by $F$.

Let $z_j^i$ be a cyclic word in $F(X)$ such that 
\begin{equation*}
z_j^i=z_{j_1}^i\ldots z_{j_d}^i,
\end{equation*}
where 
\begin{eqnarray*}
z_{j_{k+1}}^i & =  & (e_{\mu _k}^k)^{-1}\theta(W_{j_k}^i)e_{\mu
  _{k+1}}^{k+1}\qquad\textrm{for}\, k=1,\ldots,d-1,\\
\textrm{and}\qquad 
z_{j_1}^i & =  & (e_{\mu _d}^d)^{-1}\theta(W_{j_d}^i)e_{\mu
  _{1}}^1,
\end{eqnarray*}
with 
\begin{equation*}
\mu _k=\left
\{\begin{array}{ll}
1 & \textrm{if $\varepsilon _k=1$}\\
2 & \textrm{if $\varepsilon _k=-1$}\\
\end{array}
\right. .
\end{equation*}
From the definition of $z_j^i$, in the group $H$ we can see that
\begin{eqnarray*}
z_j^i & =_H & z_{j_1}^i\ldots z_{j_d}^i\\
      & =_H & (e_{\mu _1}^1)^{-1}\theta(W_{j_1}^i)e_{\mu _{2}}^2 (e_{\mu
        _2}^2)^{-1}\theta(W_{j_2}^i)e_{\mu _{3}}^3\ldots (e_{\mu
        _d}^k)^{-1}\theta(W_{j_d}^i)e_{\mu _{1}}^1\\
      &=_H & (e_{\mu _1}^1)^{-1}\theta(W_{j_1}^i)\theta(W_{j_2}^i)\ldots
      \theta(W_{j_d}^i) e_{\mu _{1}}^1\\
      & =_H &  (e_{\mu _1}^1)^{-1}\theta(W_j^i)e_{\mu _{1}}^1\\
      &\sim _H & \theta(W_j^i).
\end{eqnarray*}
We construct $z_j^i$  for all $j=1,\ldots ,t_i$ and $i=1,\ldots
,p$. We shall denote the set  $\{z_j^i, j=1,\ldots ,t_i\}$ by
$C_i$. Now, since  $z_j^i\sim _H  \theta(W_j^i)$ for all $j=1.\ldots
,t_i$, it follows that 
\begin{equation*}
genus_H(z_1^i,\ldots ,z_{t_i}^i)=genus_H(\theta(W_1^i),\ldots ,\theta(W_{t_i}^i))=g_i-t_i+1.
\end{equation*}
We shall say that the genus of $C_i$ is $g_i$. 
\begin{lemma}\label{C_i}
With $g_i$ and $d$ defined as above, if $g_i=0$ and hence $t_i=1$ then $d\geq 3$. 
\end{lemma}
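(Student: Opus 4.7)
The plan is to argue by contradiction. Assume $g_i = 0$ (so $t_i = 1$ by the definition of joint extension) and $d \leq 2$. Since $g_i - t_i + 1 = 0 = genus_H(\theta(W_1^i))$, we have $\theta(W_1^i) = 1$ in $H$. Moreover, $t_i = 1$ forces every short letter appearing in $W_1^i$ to have its inverse also in $W_1^i$, so $W_1^i$ is itself an orientable quadratic word on a letter-set disjoint from the rest of $\hat{W}$.

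In Case $d = 1$, the single long-edge endpoint forces a cyclic decomposition $\hat{W} = E X E^{-1} Y$, with $X := W_1^i$ and $Y$ an orientable quadratic word on letters disjoint from those of $X$ and $E$. Since no edge-identification bridges the $X$-corners and the $Y$-corners, an Euler-characteristic calculation gives $genus_{F(\mathcal{A})}(\hat{W}) = genus_{F(\mathcal{A})}(X) + genus_{F(\mathcal{A})}(Y)$, so $genus_{F(\mathcal{A})}(Y) \leq n$. Using $\theta(X) = 1$ in $H$, we have $\theta(\hat{W}) = \theta(E)\theta(X)\theta(E)^{-1}\theta(Y) =_H \theta(Y)$, so $\theta(Y)$ is conjugate to $h$. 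Reducing $Y$ by free reductions and removal of any redundancies to a Wicks form $W'$ of genus $n' \leq n$ with inherited labelling $\theta'$ yields $\theta'(W') =_H \theta(\hat{W})$ and
\[
|\theta'(W')| \leq |\theta(Y)| = |\theta(\hat{W})| - 2|\theta(E)| - |\theta(X)| < |\theta(\hat{W})|,
\]
using $|\theta(E)| > 12l + M + 4 > 0$ since $E$ is a long edge. If $n' < n$ this contradicts $genus_H(h) = n$; if $n' = n$ it contradicts the minimality of $(W,\theta)$ in $\mathcal{F}$.

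Case $d = 2$ proceeds analogously: now $W_1^i = X_1 X_2$ with $\theta(X_2) = \theta(X_1)^{-1}$ in $H$, and the boundary carries two long-edge endpoints on long edges $E_1, E_2$ (possibly coincident). The plan is to remove $E_1, E_2$ (or the single edge $E$ if $E_1 = E_2$) from $\hat{W}$ to form a shorter quadratic word $\tilde{W}$, and then run the same Euler-characteristic bookkeeping and length comparison to produce either a shorter pair in $\mathcal{F}$ or a Wicks form representing $h$ of genus strictly less than $n$. The main obstacle will be verifying $\theta(\tilde{W}) =_H \theta(\hat{W})$ in this case: unlike Case $d = 1$, removing the long edges does not leave a cleanly decomposed quadratic remainder, so one must carefully use $\theta(X_1)\theta(X_2) = 1$ together with the conjugation pattern introduced by $E_1, E_2$, handling the subcases $E_1 = E_2$ and $E_1 \neq E_2$ separately.
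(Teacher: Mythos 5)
Your case $d=1$ is essentially sound (and is a genuinely different route from the paper's: you contradict the minimality of $(W,\theta)$ in $\mathcal{F}$ rather than the combinatorics of Wicks forms), but the case $d=2$ is only a plan, and the plan as described does not close. Write the relevant cyclic subwords as $(E^1)^{\varepsilon_1}W_{11}^i(E^2)^{-\varepsilon_2}$ and $(E^2)^{\varepsilon_2}W_{12}^i(E^1)^{-\varepsilon_1}$ and carry out your surgery: you replace these two blocks by a single new letter $F'$ and its inverse, with $\theta'(F')$ a minimal representative of $\theta(E^1)^{\varepsilon_1}\theta(W_{11}^i)\theta(E^2)^{-\varepsilon_2}=_H\theta(E^1)^{\varepsilon_1}\theta(W_{12}^i)^{-1}\theta(E^2)^{-\varepsilon_2}$. (The identity $\theta(\tilde W)=_H\theta(\hat W)$, which you flag as the main obstacle, is actually fine — it is exactly $\theta(W_{11}^iW_{12}^i)=_H1$.) The real problem is the length comparison: you delete $2|\theta(E^1)|+2|\theta(E^2)|+|\theta(W_{11}^i)|+|\theta(W_{12}^i)|$ and pay $2|\theta'(F')|$, and the best bound available is $|\theta'(F')|\le|\theta(E^1)|+|\theta(E^2)|+\min\bigl(|\theta(W_{11}^i)|,|\theta(W_{12}^i)|\bigr)$. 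When $|\theta(W_{11}^i)|=|\theta(W_{12}^i)|$ and the product is geodesic this gives only $|\theta'(\tilde W)|\le|\theta(\hat W)|$ with possible equality, so no contradiction with minimality materialises; the genus bookkeeping for the surgered word is also left unverified. So $d=2$ is a genuine gap, not just an unfinished computation.

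The missing idea is Lemma \ref{gi}, which you never invoke: it upgrades your $H$-level fact $genus_H(\theta(W_1^i))=0$ to $genus_{F(\mathcal{A})}(W_1^i)=0$, i.e.\ $W_1^i=1$ in the free group $F(\mathcal{A})$, not merely $\theta(W_1^i)=_H1$. With that in hand no surgery or length argument is needed: for $d=1$ the word $W_{11}^i$ is a reduced subword of the cyclically reduced $W$ and cannot be freely trivial unless it is empty, which would put $(E^1)^{\varepsilon_1}(E^1)^{-\varepsilon_1}$ inside $W$; for $d=2$ one gets $W_{11}^i=(W_{12}^i)^{-1}$ in $F(\mathcal{A})$, and then the two displayed cyclic subwords force a redundancy in $W$ (the last letter of $(E^1)^{\varepsilon_1}W_{11}^i$ and $(E^2)^{-\varepsilon_2}$ occur only as a block and its inverse), contradicting that $W$ is a Wicks form. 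Your $d=1$ argument survives as a valid alternative, but you should either adopt the free-group reduction for $d=2$ or supply a strict decrease (in length or genus) that your surgery does not currently provide.
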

\begin{proof}
Suppose that $d=1$. Then $W_1^i=W_{11}^i$. We know that
$genus_H(z_1^i)=genus_H(\theta(W_1^i))=0$ and Lemma \ref{gi} implies
that $genus_{F(\mathcal{A})}(W_1^i)=0$. That is
$W_1^i=W_{11}^i=1$. But $W_{11}^i$ is a cyclic subword of the Wicks
form $W$ and by definition $W$ is cyclically reduced. Therefore this
can't occur. 

Now suppose that $d=2$. Then $W_1^i=W_{11}^iW_{12}^i$. With a similar
argument it follows that $genus_{F(\mathcal{A})}(W_1^i)=0$ and thus
$W_{11}^i=(W_{12}^i)^{-1}$. But the cyclic subwords
$(E^1)^{\varepsilon _1}W_{11}^i(E^2)^{-\varepsilon _2}$ and,
$(E^2)^{\varepsilon _2}W_{12}^i(E^1)^{-\varepsilon _1}$ appear in the
Wicks form $W$
which implies we that we have redundancy in $W$. Therefore this also can't
occur. 
\end{proof}

Finally we need to construct a set of cyclic words in $F(X)$ which we
shall denote $C_0$. We do this in the following way. Let $v_1,\ldots
,v_t$ be the set of
vertices of $\Gamma_S$ which do not lie on a boundary component of
$S$ i.e.~they lie on the endpoints of only long edges. Note that
each of these vertices has degree at least $3$\label{least3} since
$\Gamma_S$ is the graph obtained by identifying long letters of $W$
and $W$ is a Wicks form (recall that $\Gamma_W$ has no vertices of degree $1$ or
$2$). Consider $v_j$, for some $1\leq j\leq t$. Let the degree of
this vertex be $r$.  Thus there are long
edges $F^1,\ldots ,F^r$ which have an end point which is $v_j$. Note
that these $F's$ are not necessarily distinct, that is we could have loops. 
See Figure \ref{vertvandf}.
 \begin{figure}[ht]
\begin{center}
\psfrag{F1}{{\scriptsize $F^1$}}
\psfrag{F2}{{\scriptsize $F^2$}}
\psfrag{F3}{{\scriptsize $F^3$}}
\psfrag{Fe}{{\scriptsize $F^{r-1}$}}
\psfrag{Fr}{{\scriptsize $F^r$}}
\includegraphics[scale=0.5]{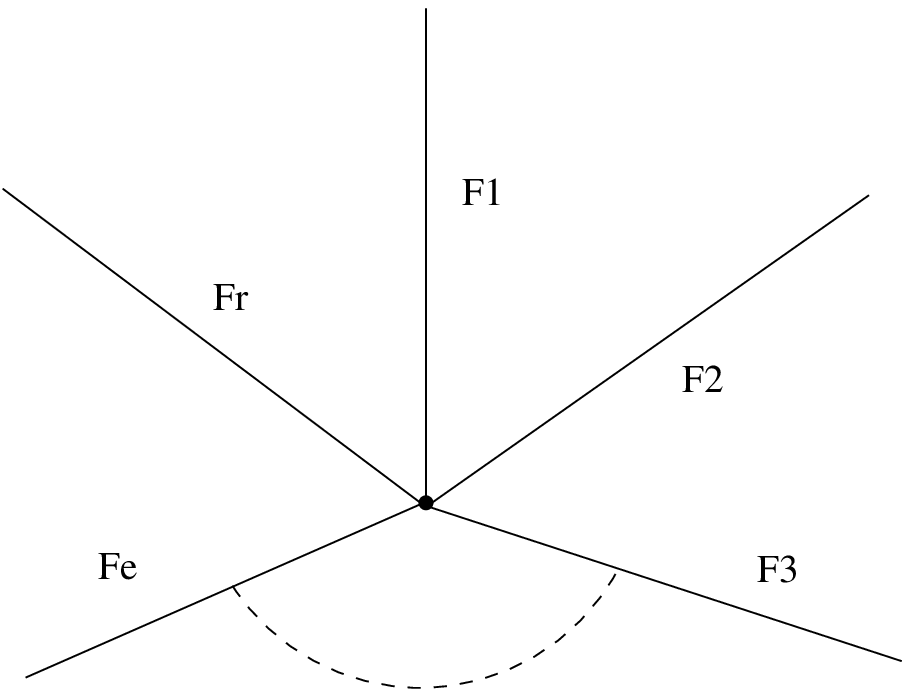}
\\
\vspace{0.38cm}                                                                  
\refstepcounter{figure}\label{vertvandf}                                             
Figure \thefigure                  
\end{center}
\end{figure}     

Thus, since $v_j$ is regular, we can renumber the long edges which have
an end point $v_j$ such that the following cyclic subwords appear in
$W$.
\begin{equation*}
(F^1)^{\varepsilon _1}(F^2)^{-\varepsilon _2},
(F^2)^{\varepsilon _2}(F^3)^{-\varepsilon _3}, \ldots
,(F^{r-1})^{\varepsilon _{d-1}}(F^r)^{-\varepsilon _r}, 
(F^{r})^{\varepsilon _{r}}(F^1)^{-\varepsilon _1},
\end{equation*}
where $\varepsilon _k=\pm 1$, for all $k=1,\ldots ,d$. As in the
construction of $C_i$ we again use the fact that
for each long edge $e$ in $\hat{W}$ there exists
 words $e_1$ and $e_2$, in $F(X)$, of length at most $5l+M+3$, such
 that, in the Cayley graph $\Gamma_X(H)$, $e_1$ labels a geodesic path
 from $\tau(\theta(E))$ to a vertex on $F$ and $e_2$ labels a geodesic
 path from $\tau(\theta(E^{-1}))$ to a vertex on $F$. 

Let $z_j^0$ be a cyclic word in $F(X)$ such that 
\begin{equation*}
z_j^0=z_{j_1}^0\ldots z_{j_d}^0,
\end{equation*}
where 
\begin{eqnarray*}
z_{j_{k+1}}^0 & =  & (f_{\mu _k}^k)^{-1}f_{\mu
  _{k+1}}^{k+1}\qquad\textrm{for}\, k=1,\ldots,r-1,\\
\textrm{and}\qquad 
z_{j_1}^0 & =  & (f_{\mu _r}^r)^{-1}f_{\mu
  _{1}}^1,
\end{eqnarray*}
again with 
\begin{equation*}
\mu _k=\left
\{\begin{array}{ll}
1 & \textrm{if $\varepsilon _k=1$}\\
2 & \textrm{if $\varepsilon _k=-1$}\\
\end{array}
\right. .
\end{equation*}
Clearly $z_j^0$ is equal to $1$ in $H$ so has genus $0$ in $H$. 
Let $C_0=\{z_j^0: 1\leq j\leq t\}$. 

We have constructed the required sets of cyclic words in $F(X)$ which
shall be used in the extension of $U$ needed to obtain part $2$ of
the Theorem.

Write $W$ around the outer boundary component of an annulus and label
the inner boundary component 
with $F$ from some fixed base point. See Figure \ref{wandf}.
\begin{figure}[ht]
\begin{center}
\psfrag{W}{{\scriptsize $\hat{W}$}}
\psfrag{F}{{\scriptsize $F$}}
\includegraphics[scale=0.4]{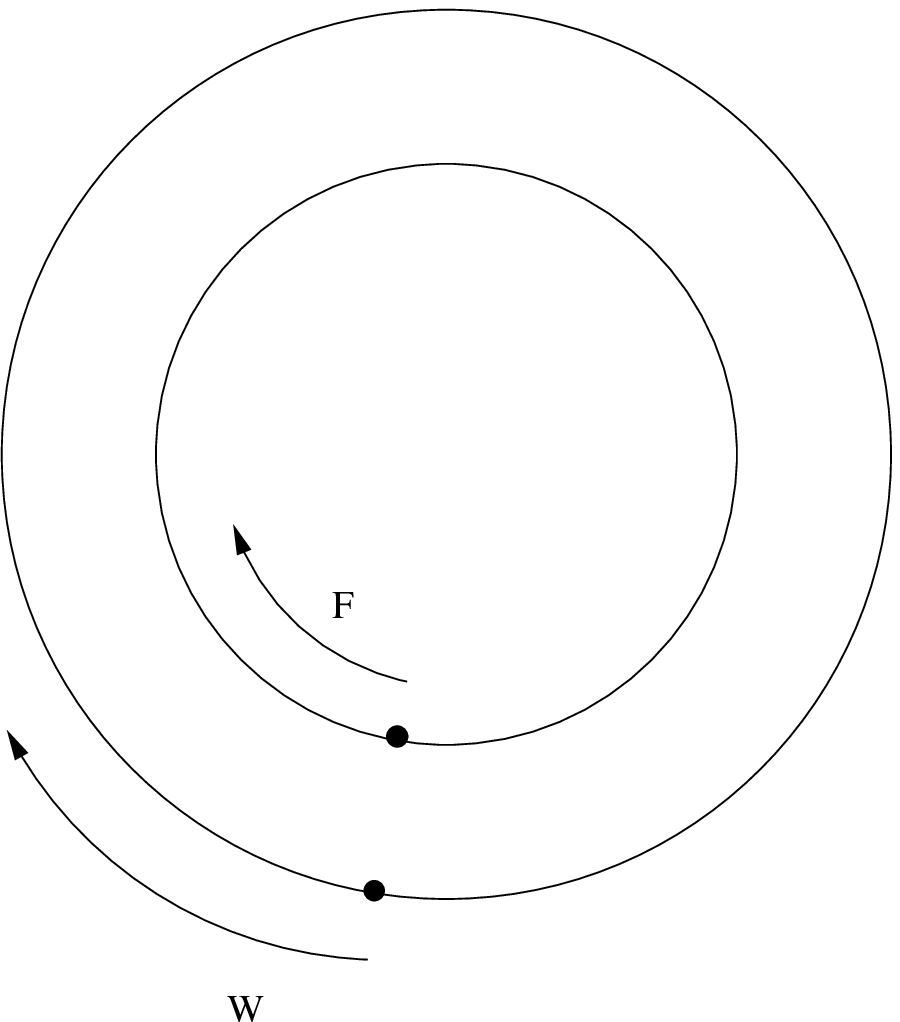}
\\
\vspace{0.38cm}                                                                  
\refstepcounter{figure}\label{wandf}                                             
Figure \thefigure                  
\end{center}
\end{figure}   
Let $A$ be a long edge of $W$. For the occurrence of $A$ with
exponent $1$ we add a properly embedded path, labelled by $a_1\in
F(X)$ (remember this has length at most $5l+M+3$ in $H$, see Lemma
\ref{lsc}), from $\tau(A)$  to $\tau(A_1)$ on $F$
 and similarly,
for the occurrence of $A$ with exponent $-1$, we add a properly embedded
path, labelled by  $a_2\in F(X)$, from
$\tau(A^{-1})$ to $\tau(A_2^{-1})$ on $F$. See Figure \ref{wanadf2}.
\begin{figure}[ht]
\begin{center}
\psfrag{W}{{\scriptsize $\hat{W}$}}
\psfrag{F}{{\scriptsize $F$}}
\psfrag{A}{{\scriptsize $A$}}
\psfrag{A1}{{\scriptsize $A_1$}}
\psfrag{A2}{{\scriptsize $A_2$}}
\psfrag{a1}{{\scriptsize $a_1$}}
\psfrag{a2}{{\scriptsize $a_2$}}
\includegraphics[scale=0.4]{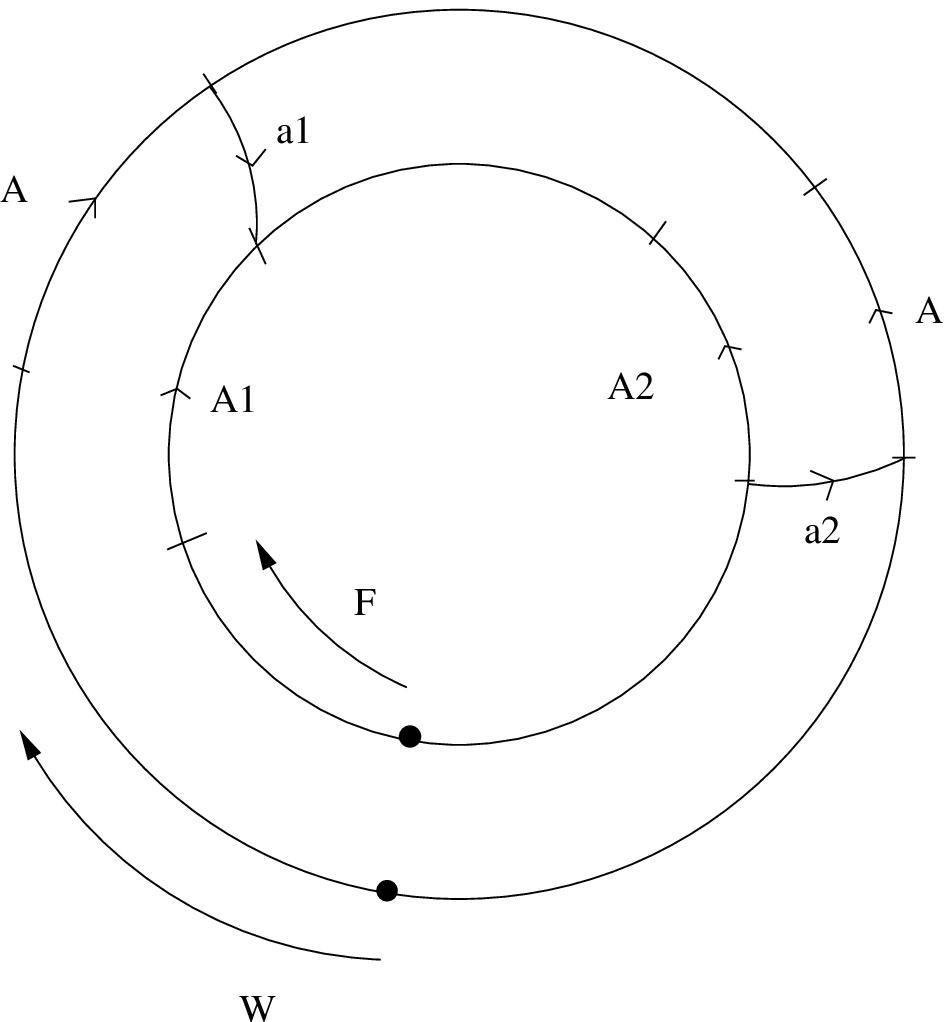}
\\
\vspace{0.38cm}                                                                  
\refstepcounter{figure}\label{wanadf2}                                             
Figure \thefigure                  
\end{center}
\end{figure}   
 
We do this for each long edge of $W$.  Identify the long edges of $W$,
respecting orientation to obtain a surface $S'$. 
The new surface $S'$ is just the surface $S$ with a disc removed. The
boundary of the disc removed from $S$ is labelled by
$F$. Now the graph $\Gamma _S$ is embedded in $S'$ and is a subgraph
of a larger graph also embedded in $S'$, which consists of the
boundary of the annulus and the properly embedded paths on the annulus
after identification.
We denote this  graph $\Gamma_{S'}$. 
See Figure
\ref{gammasA}.
\begin{figure}[ht]
\begin{center}
\psfrag{S}{{\scriptsize $S'$}}
\includegraphics[scale=0.4]{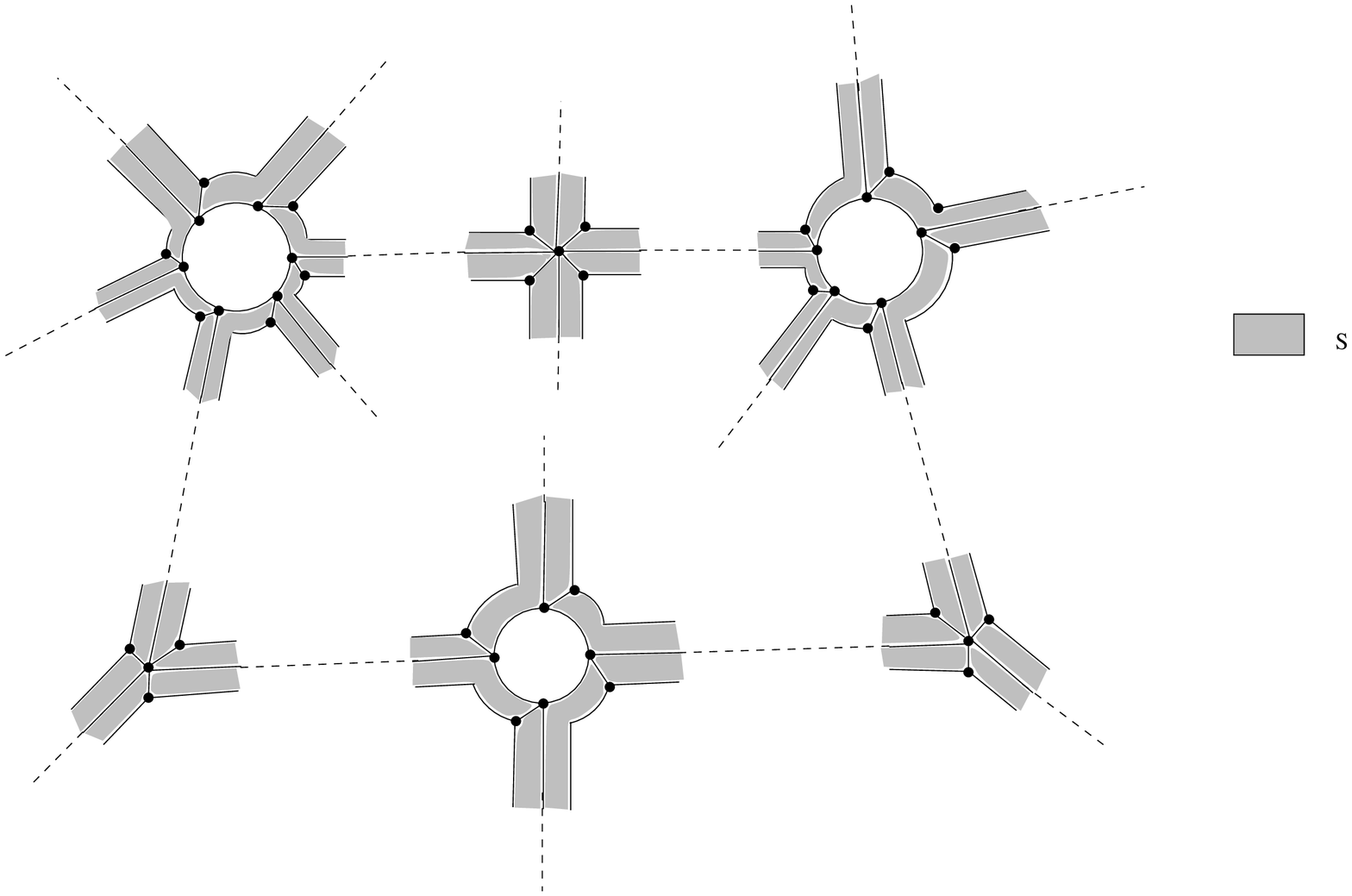}
\caption{The graph $\Gamma_{S'}$ embedded in $S'$}\label{gammasA}
\end{center}
\end{figure}   

We again consider sections of the graph $\Gamma_S$ where long edges
either meet a cyclic word $W_j^i\in B_i$, $1\leq j\leq t_i$,
$1\leq i\leq p$ or one of the vertices $v_{j'}$, $1\leq j'\leq t$,
which does not lie on a boundary component of $S$. Let $E^1,\ldots
,E^d$  be the long edges which  have an end point on
$W_j^i$(\emph{resp.} which have an end point which is $v_{j'}$) . Let
$W_j^i=W_{j_1}^i\ldots W_{j_d}^i$. Again these can be renumbered such
that $W$ contains the cyclic subwords
\begin{equation*}
(E^1)^{\varepsilon _1}W_{j_1}^i(E^2)^{-\varepsilon _2},
(E^2)^{\varepsilon _2}W_{j_2}^i(E^3)^{-\varepsilon _3}, \ldots
,(E^{d-1})^{\varepsilon _{d-1}}W_{j_{d-1}}^i(E^d)^{-\varepsilon _d}, 
(E^{d})^{\varepsilon _{d}}W_{j_{d}}^i(E^1)^{-\varepsilon _1},
\end{equation*} 
where $\varepsilon _k=\pm 1$, for all $k=1,\ldots ,d$.
 See Figure \ref{eandws}. The word
$W_j^i$ obviously has length zero if we are considering $v_{j'}$. In
our new  extended graph $\Gamma_{S'}$ this section of the graph now
takes the form as shown in Figure \ref{eandws2},
 \begin{figure}[ht]
\begin{center}
\psfrag{E1}{{\scriptsize $(E^1)^{\varepsilon _1}$}}
\psfrag{E2}{{\scriptsize $(E^2)^{\varepsilon _2}$}}
\psfrag{E3}{{\scriptsize $(E^3)^{\varepsilon _3}$}}
\psfrag{Ee}{{\scriptsize $(E^{d-1})^{\varepsilon _{d-1}}$}}
\psfrag{Ed}{{\scriptsize $(E^d)^{\varepsilon _d}$}}
\psfrag{w1}{{\tiny $W_{j_1}^i$}}
\psfrag{w2}{{\tiny $W_{j_2}^i$}}
\psfrag{we}{{\tiny $W_{j_{d-1}}^i$}}
\psfrag{wd}{{\tiny $W_{j_{d}}^i$}}
\psfrag{E11}{{\scriptsize $(E_{\mu _1}^1)^{\varepsilon _1}$}}
\psfrag{E21}{{\scriptsize $(E_{\mu _2}^2)^{\varepsilon _2}$}}
\psfrag{E31}{{\scriptsize $(E_{\mu _3}^3)^{\varepsilon _3}$}}
\psfrag{Ee1}{{\scriptsize $(E_{\mu _{d-1}}^{d-1})^{\varepsilon _{d-1}}$}}
\psfrag{Ed1}{{\scriptsize $(E_{\mu _d}^d)^{\varepsilon _d}$}}
\psfrag{E12}{{\scriptsize $(E_{\nu _1}^1)^{\varepsilon _1}$}}
\psfrag{E22}{{\scriptsize $(E_{\nu _2}^2)^{\varepsilon _2}$}}
\psfrag{E32}{{\scriptsize $(E_{\nu _3}^3)^{\varepsilon _3}$}}
\psfrag{Ee2}{{\scriptsize $(E_{\nu _{d-1}}^{d-1})^{\varepsilon _{d-1}}$}}
\psfrag{Ed2}{{\scriptsize $(E_{\nu _d}^d)^{\varepsilon _d}$}}
\psfrag{e1}{{\tiny $e_{\mu _1}^1$}}
\psfrag{e2}{{\tiny $e_{\mu _2}^2$}}
\psfrag{e3}{{\tiny $e_{\mu _3}^3$}}
\psfrag{ee}{{\tiny $e_{\mu _{d-1}}^{d-1}$}}
\psfrag{ed}{{\tiny $e_{\mu _d}^d$}}
\includegraphics[scale=0.9]{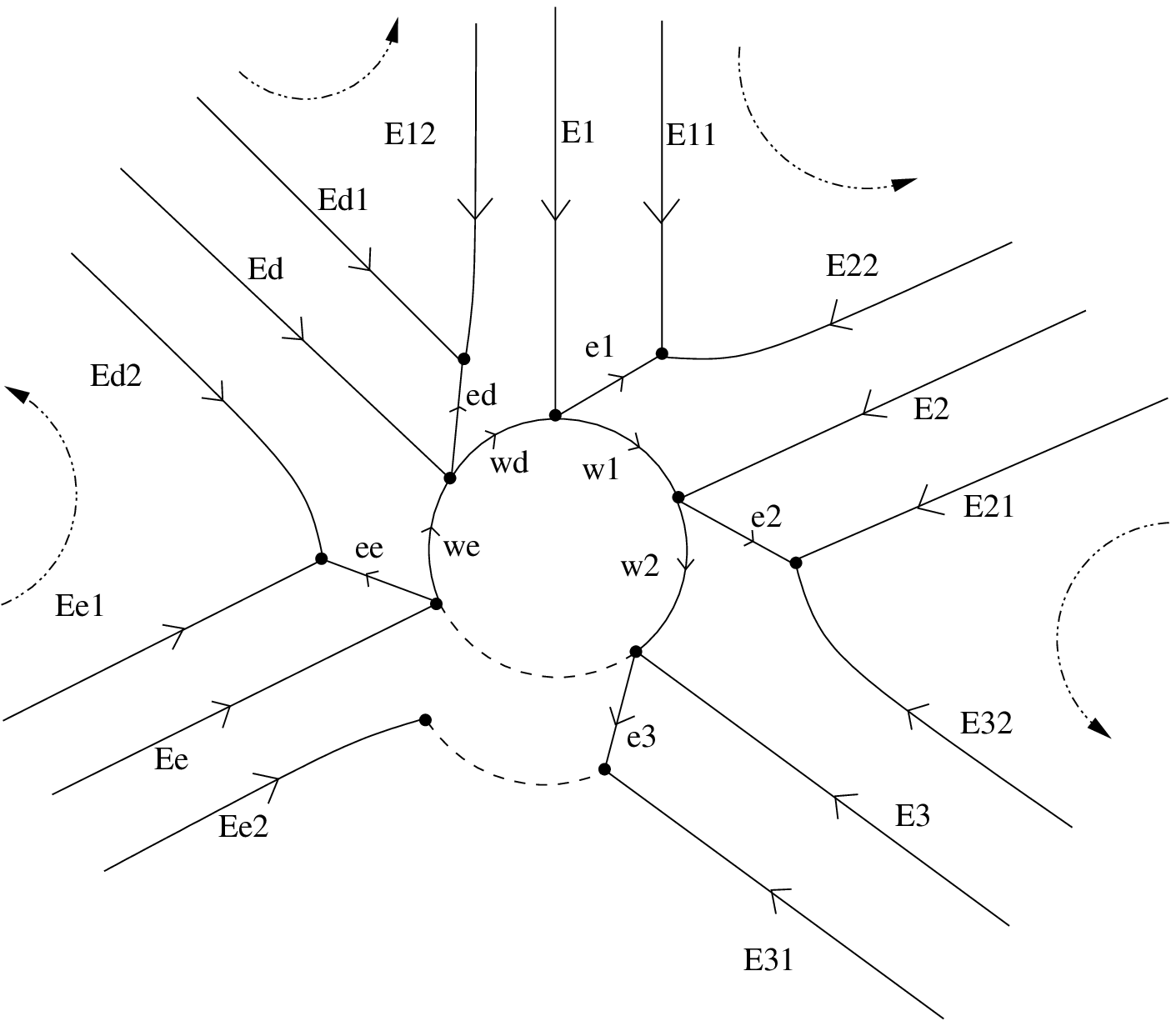}
\\
\vspace{0.38cm}                                                                  
\refstepcounter{figure}\label{eandws2}                                             
Figure \thefigure                  
\end{center}
\end{figure}     
where 
\begin{equation*}
\mu _k=\left
\{\begin{array}{ll}
1 & \textrm{if $\varepsilon _k=1$}\\
2 & \textrm{if $\varepsilon _k=-1$}\\
\end{array}
\right.
\end{equation*}
and
\begin{equation*}
\nu _k=\left
\{\begin{array}{ll}
1 & \textrm{if $\mu _k=2$}\\
2 & \textrm{if $\mu _k=1$}\\
\end{array}
\right. .
\end{equation*}
Remove the edges $E^1,\ldots , E^{d}$ from $\Gamma_{S'}$. Now,  for
each $k=1,\ldots ,d-1$,  there is an  edge path from
$\tau(E_{\mu _k}^k)$ to $\tau(E_{\mu _{k+1}}^{k+1})$ labelled by
$(e_{\mu _k}^k)^{-1}W_{j_k}^ie_{\mu _{k+1}}^{k+1}$, where  $e_{\mu
  _k}^k$, $e_{\mu _{k+1}}^{k+1}\in F(X)$ and  $W_{j_k}^i$ is sequence
of short edges in $W$. Note that if $k=d$ then the edge path from
$\tau(E_{\mu _d}^d)$  to $\tau(E_{\mu _{1}}^{1})$ is labelled by
 $(e_{\mu _d}^d)^{-1}W_{j_k}^ie_{\mu _{1}}^{1}$.  We have already
 defined  $z_{j_k}^i=(e_{\mu
  _k}^k)^{-1}\theta(W_{j_k}^i)e_{\mu _{k+1}}^{k+1} $, for 
$k=1,\ldots ,d-1$ and  $(e_{\mu _d}^d)^{-1}\theta(W_{j_k}^i)e_{\mu
  _{1}}^{1}$ for $k=d$. For all $k=1,\ldots ,d$, we shall
also remove the  edges
labelled by $W_{j_k}^i$ and $e_{j_k}^k$, add  new edges from
$\tau(E_{\mu _k}^k)$ to $\tau(E_{\mu _{k+1}}^{k+1})$, labelled by
$z_{j_k}^i$, for $k=1,\ldots ,d-1$, and add a new edge from  $\tau(E_{\mu
  _d}^d)$  to $\tau(E_{\mu _{1}}^{1})$, labelled by $z_{j_d}^i$, for
$k=d$. See Figure \ref{eandws3}.
 \begin{figure}[ht]
\begin{center}
\psfrag{E11}{{\tiny $(E_{\mu _1}^1)^{\varepsilon _1}$}}
\psfrag{E21}{{\tiny $(E_{\mu _2}^2)^{\varepsilon _2}$}}
\psfrag{E31}{{\tiny $(E_{\mu _3}^3)^{\varepsilon _3}$}}
\psfrag{Ed1}{{\tiny $(E_{\mu _d}^d)^{\varepsilon _d}$}}
\psfrag{E12}{{\tiny $(E_{\nu _1}^1)^{\varepsilon _1}$}}
\psfrag{E22}{{\tiny $(E_{\nu _2}^2)^{\varepsilon _2}$}}
\psfrag{E32}{{\tiny $(E_{\nu _3}^3)^{\varepsilon _3}$}}
\psfrag{Ed2}{{\tiny $(E_{\nu _d}^d)^{\varepsilon _d}$}}
\psfrag{z1}{{\tiny $z_{j_1}^i$}}
\psfrag{z2}{{\tiny $z_{j_2}^i$}}
\psfrag{z3}{{\tiny $z_{j_3}^i$}}
\psfrag{zd}{{\tiny $z_{j_d}^i$}}
\includegraphics[scale=0.6]{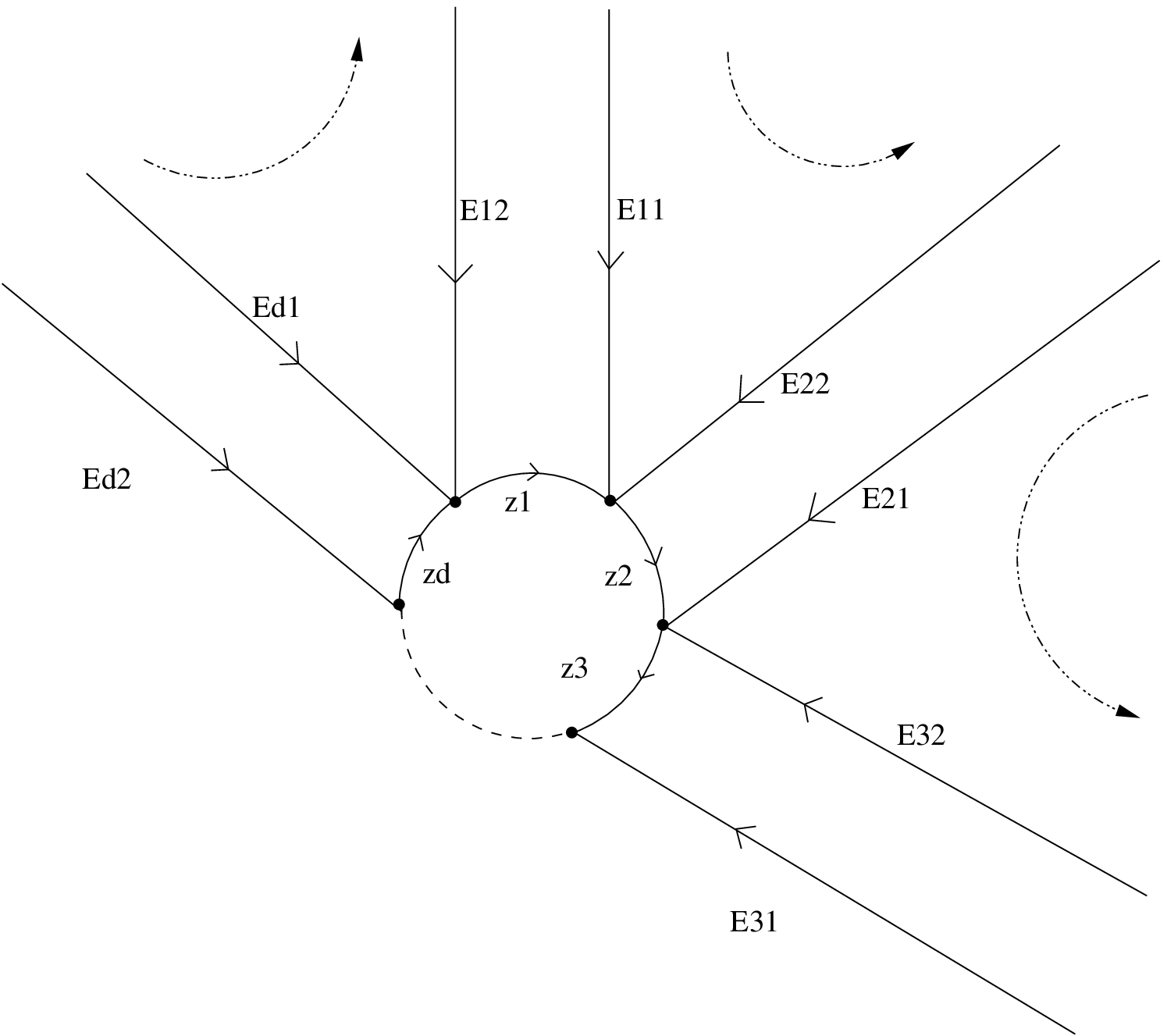}
\\
\vspace{0.38cm}                                                                  
\refstepcounter{figure}\label{eandws3}                                             
Figure \thefigure                  
\end{center}
\end{figure}        

We now do this for all $W_j^i\in B_i$, $j=1,\ldots, t_i$, $i=1,\ldots ,p$,
and $v_{j'}$, $j'=1,\ldots ,t$. We shall call this new graph
$\Gamma_{S''}$.  It is clear, by the way we numbered $E^1,\ldots
,E^d$, that the following are cyclic subwords of the  Hamiltonian
cycle in $\Gamma_{S''}$ labelled by $F$.  
\begin{equation*}
(E_{\mu 1}^1)^{\varepsilon _1}(E_{\nu 2}^2)^{-\varepsilon _2},
(E_{\mu 2}^2)^{\varepsilon _2}(E_{\nu 3}^3)^{-\varepsilon _3}, \ldots
,(E_{\mu _{d-1}}^{d-1})^{\varepsilon _{d-1}}(E_{\nu d}^d)^{-\varepsilon _d}, 
(E_{\mu _d}^{d})^{\varepsilon _{d}}(E_{\nu 1}^1)^{-\varepsilon _1},
\end{equation*} 
where $\varepsilon _k=\pm 1$, for all $k=1,\ldots ,d$.  
The cyclic sequence of letters of $U$ gives  the
cyclic sequence of segments of $F$ by replacing a letter $E$ of
$U$ with $E_1$ for its occurrence with exponent $1$ and $E_2^{-1}$ for
its occurrence with exponent $-1$. Thus, to show that $\Gamma_{S''}$ is
an extension of $U$ over $H$ by the cyclic words in the sets
$C_0,\ldots ,C_p$, it is clear that we must show that  step $3$, of
the construction of an extension over $H$, holds. Let $A$ be a long
edge of $\hat{W}$ and without loss of generality assume that $A$ appears
before $A^{-1}$.  Consider the two edges
$A_1$ and $A_2$ in $\Gamma_{S''}$. There exist an $z_{j_r}^i$ and
$z_{j'_s}^{i'}$ which we shall denote $x$ and $y$ respectively such
that the edge path  $A_1, x^{-1}, A_2^{-1}, y^{-1}$ is a cycle in
$\Gamma_{S''}$. See Figure  \ref{extstep3b}.
 \begin{figure}[ht]
\begin{center}
\psfrag{x}{{\scriptsize $x$}}
\psfrag{y}{{\scriptsize $y$}}
\psfrag{e1}{{\scriptsize $A_1$}}
\psfrag{e2}{{\scriptsize $A_2$}}
\includegraphics[scale=0.5]{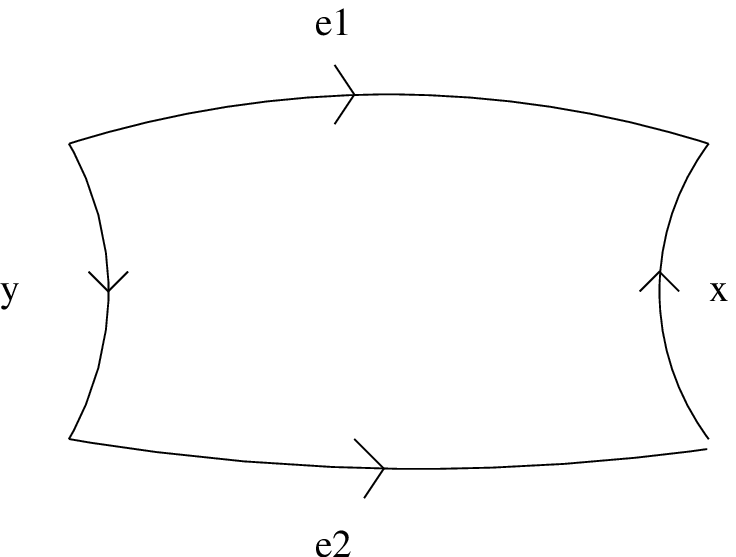}
\\
\vspace{0.38cm}                                                                  
\refstepcounter{figure}\label{extstep3b}                                             
Figure \thefigure                  
\end{center}
\end{figure}     
By the construction (see definitions of the $z_j^i$'s
and Figure \ref{eandws2}) $x$
and $y$ can be split up into subwords, $x=x_1x_2$ and $y=y_1y_2$
respectively, such that in $\Gamma_X(H)$ we have closed paths as shown
in Figure \ref{apmb4}. Note that $x_2$ and $y_2$ are $a_1$ and $a_2$ but $x_1$
and $y_1$ may include part of $\theta(W)$ as indicated in the diagram.
\begin{figure}[ht]
\begin{center}
\psfrag{A}{{\scriptsize $\theta(A)$}}
\psfrag{A1}{{\scriptsize $A_1$}}
\psfrag{A2}{{\scriptsize $A_2$}}
\psfrag{x1}{{\scriptsize $x_1$}}
\psfrag{F}{{\scriptsize $F$}}
\psfrag{x2}{{\scriptsize $x_2$}}
\psfrag{y2}{{\scriptsize $y_2$}}
\psfrag{y1}{{\scriptsize $y_1$}}
\includegraphics[scale=0.4]{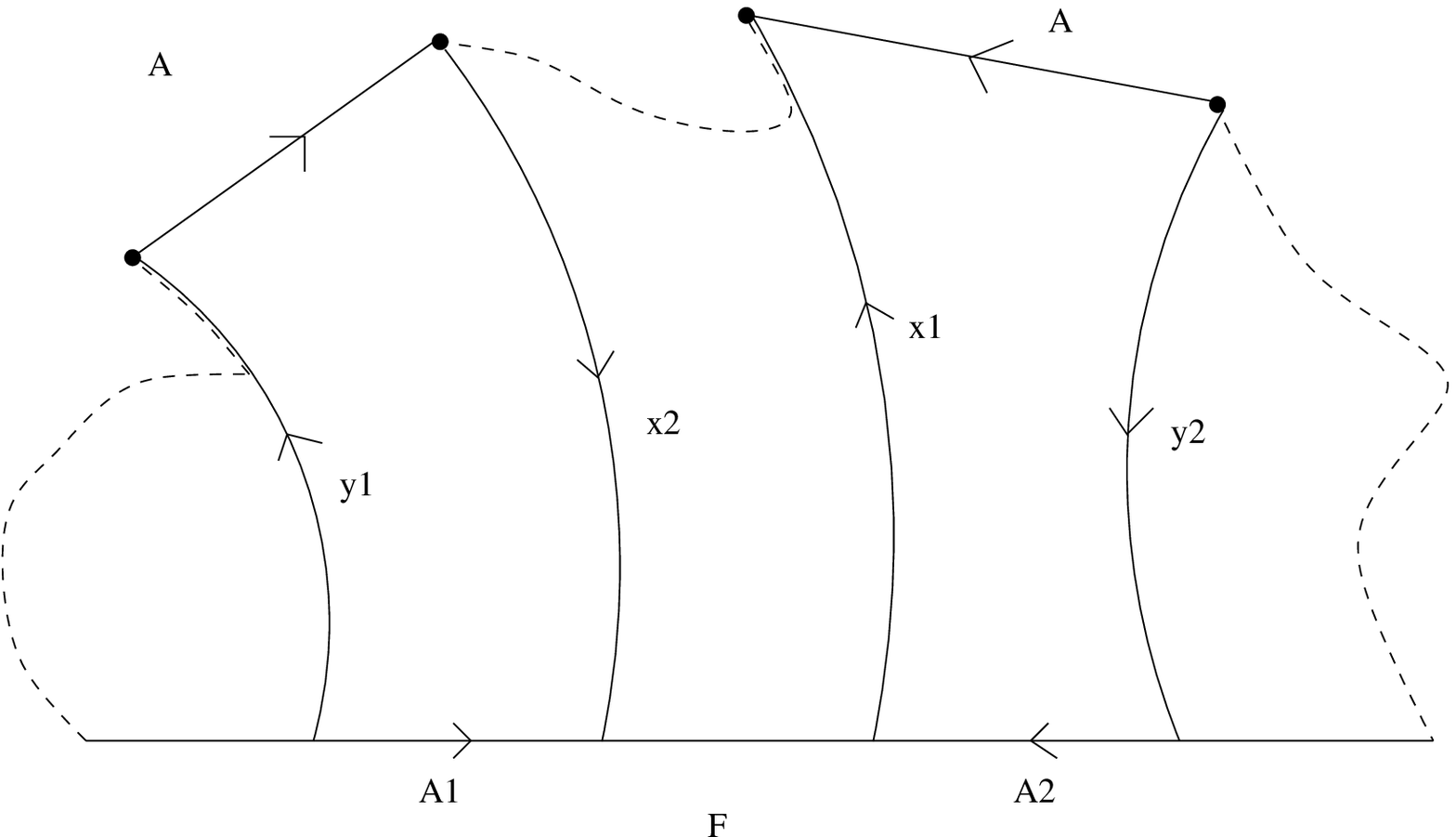}
\\
\vspace{0.38cm}                                                                  
\refstepcounter{figure}\label{apmb4}                                             
Figure \thefigure                  
\end{center}
\end{figure}
From $\Gamma_X(H)$ we can see that  
\begin{eqnarray*}
\theta(A) & =_H & y_1^{-1} A_1 x_2^{-1}\\
          & =_H & y_2 A_2 x_1.    
\end{eqnarray*}
It follows that $A_1x^{-1}A_2^{-1}y^{-1}=_H 1$. Hence step $3$
holds and we have an extension of $U$ over $H$. 
\subsection{Length and Genus of the Extension on $U$}
We have already shown that the genus of $U$ is $k$. 
Remember that the genus of the extension is given by the sum of the genus of the
cyclic words used. But these are just the elements of the sets
$C_0,C_1,\ldots ,C_p$. Now the genus of every element of $C_0$ is
zero (Note also that the vertices extended by elements of $C_0$ have
degree of at least $3$, see page \pageref{least3}) and the genus of
the elements in each set $C_i$, for $i=1,\ldots ,p$ is
equal to $g_i-t_i+1$ (again note that if the genus of $C_i=0$ then the
vertex extended by the only element of $C_i$ is of degree at least
$3$, see Lemma \ref{C_i}). Therefore, it follows from the definition that
the genus of the extension is equal to the sum of the genus of each
set $C_i$, for $i=1,\ldots ,p$.  We know that 
\begin{equation*}
g=\sum_{i=1}^{p}g_i,
\end{equation*}
and $n=k+g$, see Lemma \ref{gi}. Thus the extension is of the required genus for part $2$ of the
Theorem to hold. 

The length of the extension is given by the sum of the length of the
words in the sets $C_0,C_1,\ldots ,C_p$. Each element of a set $C_i$,
for $i=0,\ldots ,p$, takes the form 
\begin{equation*}
z_j^i=z_{j_1}^i\ldots z_{j_d}^i,
\end{equation*}
where 
\begin{eqnarray*}
z_{j_{k+1}}^i & = & (e_{\mu _k}^k)^{-1}\theta(W_{j_k}^i)e_{\mu
  _{k+1}}^{k+1},\qquad\textrm{for}\, k=1,\ldots,d-1,\\
\textrm{and}\qquad 
z_{j_1}^i & = & (e_{\mu _d}^d)^{-1}\theta(W_{j_d}^i)e_{\mu
  _{1}}^1,
\end{eqnarray*}
with all the $\theta(W_{j_k}^i)$'s  having length zero if $i=0$.
Each short edge of $W$ appears in a unique $W_{j_k}^i$ and these have
labels in $F(X)$ which have
length  at most  $12l+M+4$.  Each
$e_{\mu _k}^k$ is a word of length at most $5l+M+3$ arising from  a
long edge of $W$, see Lemma \ref{lsc} and these each appear
twice in some $z_j^i$. By Lemma \ref{cull},  it follows that the maximum
number of letters in $W$ is $12n-6$. Thus the sum of number of short
letters and long letters is at most $12n-6$. Therefore, if
$\mathcal{S}$ and $\mathcal{L}$ are the numbers of short edges and long
edges respectively, the length of the extension 
is given by
\begin{eqnarray*}
(12l+M+4)\mathcal{S}+2(5l+M+3)\mathcal{L} &\leq & 2(12n-6)(12l+M+4).
\end{eqnarray*}   
Hence we have the required extension and the theorem holds.\label{endthm}
\end{proof}
\section{Forms for Commutators in Hyperbolic Groups}\label{commutators}
We shall now use Theorem \ref{Thexp} to obtain a full list of all
possible forms for commutators in $H=\langle X|R\rangle$. Now since in this
case $n=1$, it follows that  $l=\delta(\log _2(6)+1)$.
\begin{prop}\label{propos}
If $h$ is a commutator in $H$ then there are  words
$R$ and $F$ in $F(X)$ which are minimal in  $H$ such that $h=_H
RFR^{-1}$,  where $|R|\leq 
59l+8M+28+2\delta +\frac{|h|_H}{2}$ and $F$ takes one of the following forms.
\begin{enumerate}
\item $|F|\leq 6(12l+M+4)$ with $F=_H XYZX^{-1}Y^{-1}Z^{-1}$ and
$|X|$, $|Y|$, $|Z|\leq 12l+M+4$.
\item $F=A_1A_2^{-1}$ with $A_1=_H \xi _1^{-1}A_2 \xi _2$. Where $|\xi
_1|+|\xi _2|\leq 12(12l+M+4)$ and $\xi _1$ is conjugate to $\xi _2$ in
$H$.  
\item $F=A_1B_1A_2 ^{-1}B_2 ^{-1}$ with $A_1=_H \xi _1A_2\xi _3$, $B_1=_H
\xi _4B_2\xi _2$. Where $|\xi _1|+|\xi _2|+|\xi _3|+|\xi _4|\leq
12(12l+M+4)$   and
$\xi _1\xi _2\xi _3\xi _4=_H 1$.
\item $F=A_1B_1C_1A_2^{-1}B_2^{-1}C_2^{-1}$ with $A_1=_H \xi _1A_2\rho
  _1$,  $B_1=_H
\rho _2B_2\xi _2$ and $C_1=_H \xi _3C_2\rho _3$. Where $|\xi _1|+|\xi
_2|+|\xi _3|+|\rho _1|+|\rho _2|+|\rho _3|\leq 12(12l+M+4)$ and   $\xi _1\xi
_2\xi _3 =_H\rho _1\rho _2\rho _3=_H 1$. 
\end{enumerate}
\end{prop}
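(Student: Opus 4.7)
The plan is to apply Theorem~\ref{Thexp} with $n=1$, which is valid since $h$ being a commutator implies $genus_H(h)=1$, and to read off the four cases of the proposition from the two cases of the theorem. Throughout, Lemma~\ref{lb} provides a conjugator $R$ with $|R|\le\tfrac12(|h|_H+|F|)+M+1$, and the stated bound on $|R|$ will follow from explicit bounds on $|F|$ in each case.

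First I would handle form~1 of Theorem~\ref{Thexp}. Lemma~\ref{cull} gives $|W|\le 6$, and the conditions that $W$ be cyclically reduced, irredundant, and have no vertex of $\Gamma_W$ of degree $1$ or $2$ force, up to cyclic permutation and sign-inversion of letters, exactly two genus-$1$ Wicks forms: the length-$6$ form $ABCA^{-1}B^{-1}C^{-1}$ (two vertices of degree $3$) and the length-$4$ form $ABA^{-1}B^{-1}$ (one vertex of degree $4$). The former gives case~1 of the proposition with $X=\theta(A)$, $Y=\theta(B)$, $Z=\theta(C)$ each of length at most $12l+M+4$; the latter gives $F=_H XYX^{-1}Y^{-1}$, which fits case~3 with $A_2=A_1=X$, $B_2=B_1=Y$ and all $\xi_i$ trivial.

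Next I would address form~2 of Theorem~\ref{Thexp}, so $F$ is the label on the Hamiltonian cycle $U'$ of a genus-$g$ extension on an orientable word $U$ of genus $k$ with $k+g=1$. The partition rules $g_i\ge t_i-1$ for each joint extension of $t_i$ vertices, together with $g_i\ge 1$ for each singleton whose unique vertex has degree $\le 2$ and $\sum g_i=g$, severely restrict the admissible data. When $k=1$, the same enumeration as above forces $U\in\{ABA^{-1}B^{-1},\,ABCA^{-1}B^{-1}C^{-1}\}$: for the first, the single degree-$4$ vertex is extended by a cyclic word $\xi_1\xi_2\xi_3\xi_4$ trivial in $H$, and step~$3$ yields the relations of case~3; for the second, each of the two degree-$3$ vertices is extended by a cyclic word of three segments trivial in $H$, giving case~4. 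When $k=0$, $\Gamma_U$ is a tree, and a direct check on all trees of at most three edges shows that no admissible partition achieves $\sum g_i=1$ except $U=AA^{-1}$; the forced joint genus-$1$ extension of its two degree-$1$ leaves produces the Hamiltonian cycle $A_1A_2^{-1}$, and step~$3$ delivers the relation of case~2.

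The principal obstacle is the $k=0$ enumeration: one must verify, tree by tree, that every genus-$0$ orientable word with $|U|\in\{4,6\}$ has every admissible partition of its vertex set force $\sum g_i\ge 2$, so that no such $U$ contributes. Once this enumeration is complete, the bound on $|R|$ is obtained by inserting into Lemma~\ref{lb} the bound $|F|\le 6(12l+M+4)$ from form~1 in case~1, and bounds on the labels $A_i,B_i,C_i$ derived from the extension-length bound $2(12n-6)(12l+M+4)=12(12l+M+4)$ of form~2 together with the minimality of each piece; the $2\delta$ summand in the stated bound tracks the slack introduced when relating the Hamiltonian-cycle label to a minimal representative via the $\delta$-thin triangle condition.
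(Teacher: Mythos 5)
Your enumeration of the admissible forms is essentially the paper's argument (the paper is even terser, dismissing the case check as ``easy''), and your reading of form 1 versus form 2 of Theorem~\ref{Thexp} is correct. The genuine gap is in your plan for bounding $|R|$. You write that ``the stated bound on $|R|$ will follow from explicit bounds on $|F|$ in each case'' via Lemma~\ref{lb}, but in cases 2--4 there is no explicit bound on $|F|$: the segments $A_i$, $B_i$, $C_i$ are the labels of the \emph{long} edges, which by definition have length greater than $12l+M+4$ and admit no upper bound whatsoever. The extension-length bound $12(12l+M+4)$ controls only the $\xi_i$ and $\rho_i$, and the relations such as $A_1=_H\xi_1^{-1}A_2\xi_2$ control only the \emph{difference} $\bigl||A_1|-|A_2|\bigr|$, not the lengths themselves. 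Minimality of each piece (being geodesic) does not bound its length either. So Lemma~\ref{lb} alone cannot deliver $|R|\leq 59l+8M+28+2\delta+\frac{|h|_H}{2}$ in those cases.

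The missing idea is the paper's Lemma~\ref{conj}. One uses Lemma~\ref{lsc} to place the terminal vertex $u_1$ of a long edge within $K=5l+M+3$ of a vertex $u_2$ on $F$ which, by the difference bounds above, splits $F$ into two pieces whose lengths differ by at most $L=12(12l+M+4)$. Choosing the cyclic permutation $\hat{W}$ (ending in a long letter) that minimises $|R|$, one projects $u_2$ onto the quadrilateral $h'RFR^{-1}$ to within $2\delta$ and performs a cut-and-paste: either the projection lands on $R^{\pm1}$, which forces $d(\iota(F),u_2)\leq K+2\delta$ and hence $|F|\leq 2K+4\delta+L$, after which Lemma~\ref{lb} applies; or it lands on $h'$, in which case one gets $|R|\leq K+2\delta+\frac{|h|_H}{2}$ directly. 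This dichotomy --- either $F$ is short or $R$ is short --- is what produces the $2\delta$ and the $59l+8M+28$ in the stated bound, and it is not recoverable from your outline as written.
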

\begin{proof}
By Theorem \ref{Thexp} and our knowledge of genus $1$ Wicks forms (see
beginning of the paper), $h$
is conjugate to a  minimal word $F$ which
either has form $1$
above or is obtained by a genus $g$ extension of length at most $12(12l+M+4)$
 on some orientable word of genus $k$ such that $g+k=1$. This implies
 that  there are only
three possible orientable words which can have a suitable
extension(this is easy to check). 
\begin{enumerate}
\item[(i)] An orientable word $U=AA^{-1}$ of genus $0$ with a joint genus
  $1$ extension constructed on the two vertices of $\Gamma_U$.
\item[(ii)] An orientable word $V=ABA^{-1}B^{-1}$ of genus $1$ with a genus
  $0$ extension constructed on the only vertex of $\Gamma_V$.
\item[(iii)] An orientable word $W=ABCA^{-1}B^{-1}C^{-1}$ of genus $1$ with a
  genus $0$ extension constructed on both of the vertices of $\Gamma_W$.
\end{enumerate}
{\flushleft{(i)}} We extend the graph $\Gamma_U$ as shown in Figure \ref{form2pic}. 
\begin{figure}[ht]
\begin{center}
\psfrag{A}{{\scriptsize $A$}}
\psfrag{A1}{{\scriptsize $A_1$}}
\psfrag{A2}{{\scriptsize $A_2$}}
\psfrag{w1}{{\scriptsize $\xi _1$}}
\psfrag{w2}{{\scriptsize $\xi _2$}}
\includegraphics[scale=0.4]{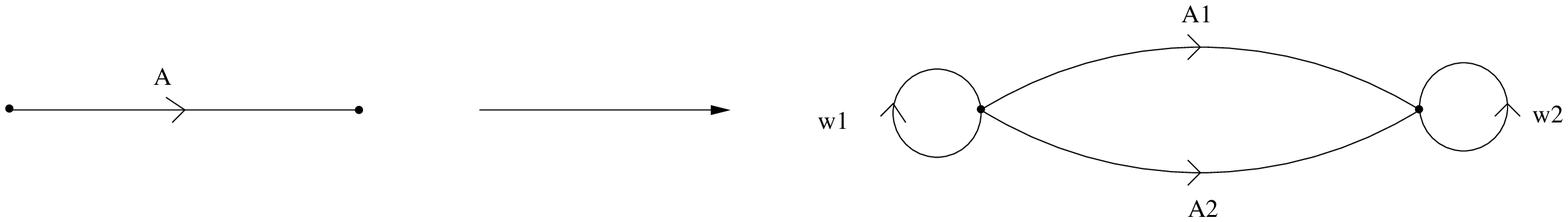}
\\
\vspace{0.38cm}                                                                  
\refstepcounter{figure}\label{form2pic}                                             
Figure \thefigure                  
\end{center}
\end{figure} 
By Theorem \ref{Thexp}, $F$ takes the form of the Hamiltonian cycle
$A_1A_2^{-1}$ in the extended graph and from the nature of the
extension constructed on $U$, it is clear that we have form $2$. 

{\flushleft{(ii)}} We extend the graph $\Gamma_V$ as shown in Figure
\ref{form3pic}.
\begin{figure}[ht]
\begin{center}
\psfrag{A}{{\scriptsize $A$}}
\psfrag{A1}{{\scriptsize $A_1$}}
\psfrag{A2}{{\scriptsize $A_2$}}
\psfrag{B}{{\scriptsize $B$}}
\psfrag{B1}{{\scriptsize $B_1$}}
\psfrag{B2}{{\scriptsize $B_2$}}
\psfrag{w1}{{\tiny $\xi _1$}}
\psfrag{w2}{{\tiny $\xi _2$}}
\psfrag{w3}{{\tiny $\xi _3$}}
\psfrag{w4}{{\tiny $\xi _4$}}
\includegraphics[scale=0.6]{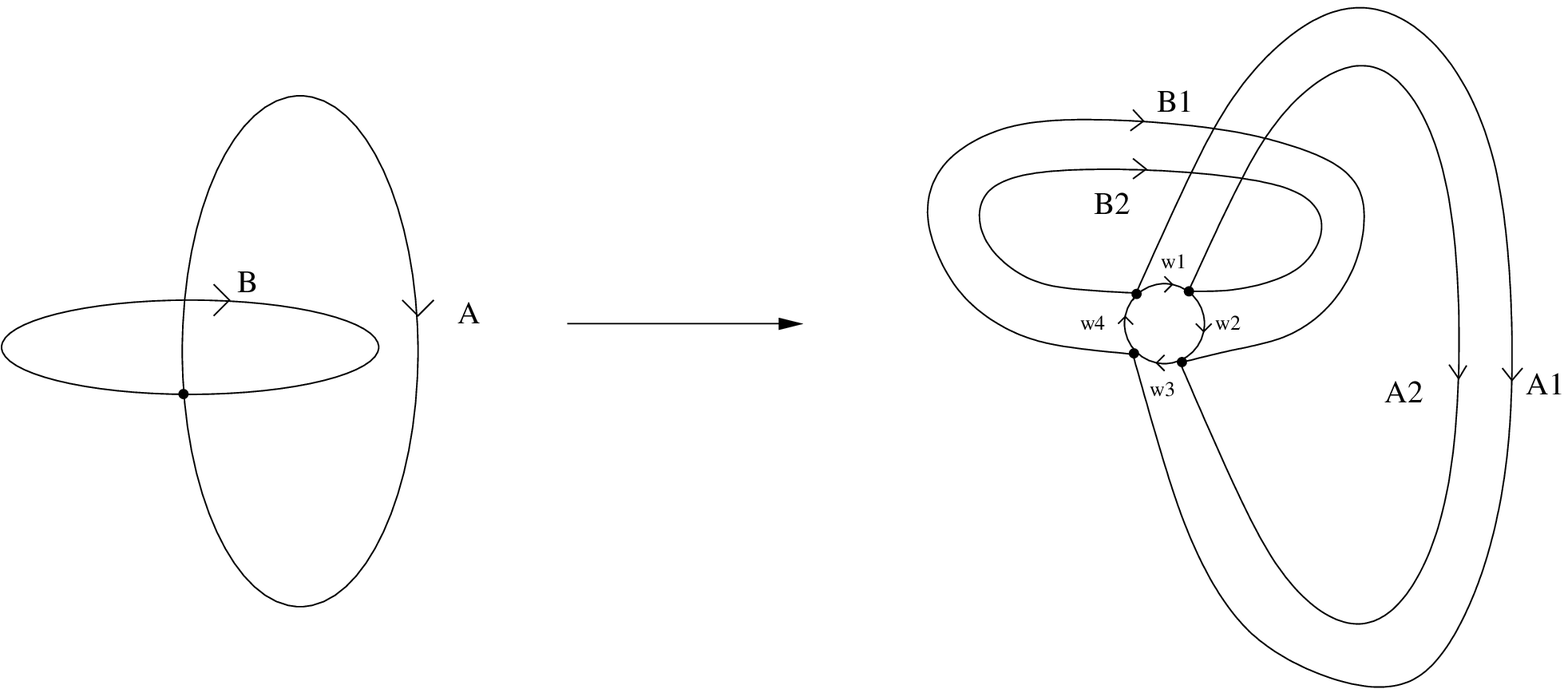}
\\
\vspace{0.38cm}                                                                  
\refstepcounter{figure}\label{form3pic}                                             
Figure \thefigure                  
\end{center}
\end{figure} 
By Theorem \ref{Thexp}, $F$ takes the form of the Hamiltonian cycle
$A_1B_1A_2^{-1}B_2^{-1}$ in the extended graph and from the nature of the
extension constructed on $V$, it is clear that we have form $3$. 

{\flushleft{(iii)}}  Finally, we extend the graph $\Gamma_W$ as shown in Figure
\ref{form4pic}.
\begin{figure}[ht]
\begin{center}
\psfrag{A}{{\scriptsize $A$}}
\psfrag{A1}{{\scriptsize $A_1$}}
\psfrag{A2}{{\scriptsize $A_2$}}
\psfrag{B}{{\scriptsize $B$}}
\psfrag{B1}{{\scriptsize $B_1$}}
\psfrag{B2}{{\scriptsize $B_2$}}
\psfrag{C}{{\scriptsize $C$}}
\psfrag{C1}{{\scriptsize $C_1$}}
\psfrag{C2}{{\scriptsize $C_2$}}
\psfrag{z1}{{\tiny $\rho _1$}}
\psfrag{z2}{{\tiny $\rho _2$}}
\psfrag{z3}{{\tiny $\rho _3$}}
\psfrag{w1}{{\tiny $\xi _1$}}
\psfrag{w2}{{\tiny $\xi _2$}}
\psfrag{w3}{{\tiny $\xi _3$}}
\includegraphics[scale=0.35]{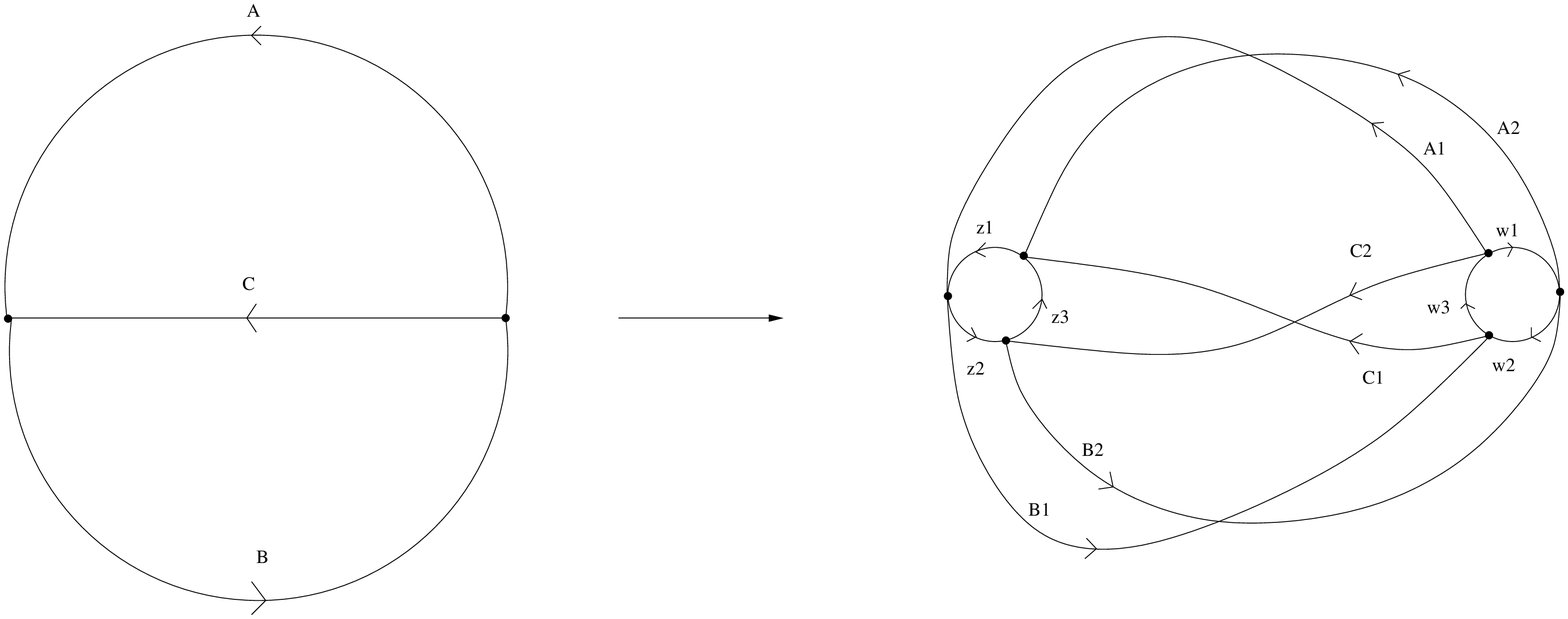}
\\
\vspace{0.38cm}                                                                  
\refstepcounter{figure}\label{form4pic}                                             
Figure \thefigure                  
\end{center}
\end{figure} 
Again, by Theorem \ref{Thexp}, $F$ takes the form of the Hamiltonian cycle
$A_1B_1C_1A_2^{-1}B_2^{-1}C_2^{-1}$ in the extended graph and from the
nature  of the
extension constructed on $W$, it is clear that we have form $4$. 
Hence $h$ is conjugate to some $F$ which takes one of the required forms. 

Let $R$ be the shortest word in $H$ such that $h=_HRFR^{-1}$. 
If $F$ takes form $1$ then, by Lemma  \ref{lb}, it follows that
\begin{eqnarray*}
R &\leq& \frac{1}{2}(|F|+|h|_H)+M+1\\
   &\leq & \frac{1}{2}(6(12l+M+4)+|h|_H)+M+1\\
   &\leq & 36l+4M+13+\frac{|h|_H}{2}.
\end{eqnarray*}
Thus the proposition holds. Therefore, suppose that $F$ is obtained by
an  extension of some orientable word. In the proof of Theorem
\ref{Thexp}, $F$ was constructed from some Wicks form $W$ and a
labelling function $\theta $ such that $\theta(W)$ was minimal over
the set of pairs in $\mathcal{F}$. In
the proof we chose a cyclic permutation $\hat{W}$ of $W$ such that the last
letter was a long edge. Now in the genus $1$ case all Wicks forms take
the form $XYZX^{-1}Y^{-1}Z^{-1}$ where at most one of these letters is
set to $1$,
see \cite{wicks}, and obviously all cyclic permutations are of this
form too. Thus, in this case, we shall 
choose the cyclic permutation $\hat{W}$ which ends in a long letter such that $R$ is the
shortest over all cyclic permutations which end in a long letter. Let
$\hat{W}=ABCA^{-1}B^{-1}C^{-1}$.  It follows from the proof of Theorem
\ref{Thexp} that $F=_H \theta(\hat{W})$. See Figure \ref{caywhat}. 
\begin{figure}[ht]
\begin{center}
\psfrag{A}{{\scriptsize $\theta(A)$}}
\psfrag{B}{{\scriptsize $\theta(B)$}}
\psfrag{C}{{\scriptsize $\theta(C)$}}
\psfrag{F}{{\scriptsize $F$}}
\psfrag{R}{{\scriptsize $R$}}
\psfrag{h}{{\scriptsize $h$}}
\includegraphics[scale=0.4]{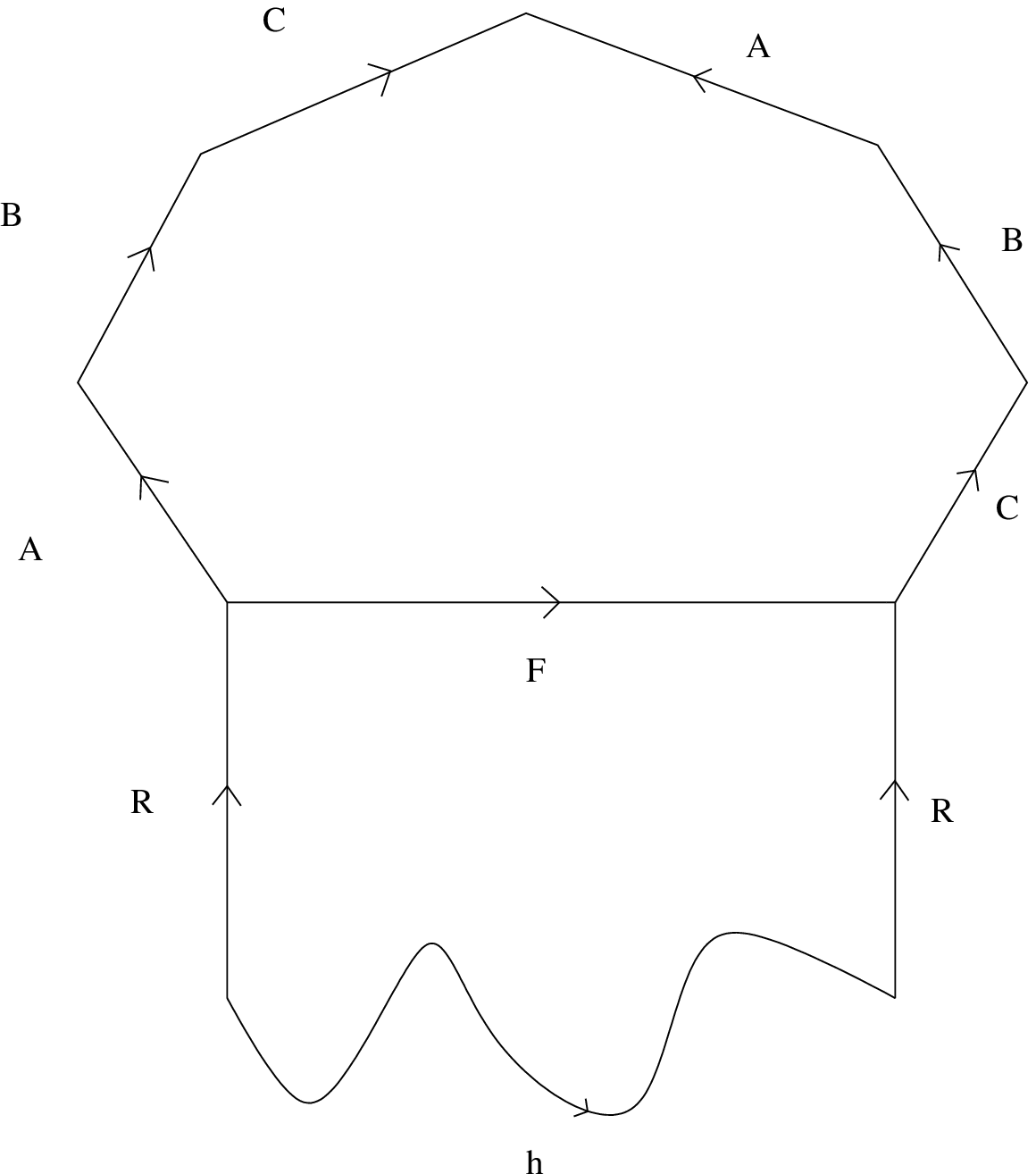}
\\
\vspace{0.38cm}                                                                  
\refstepcounter{figure}\label{caywhat}                                             
Figure \thefigure                  
\end{center}
\end{figure}

We need the following lemma.
\begin{lemma}\label{conj}
Let $u_1$ be the terminal vertex of the label of a long edge of $\hat{W}$ in
$\Gamma_X(H)$.  Suppose that
there exist   a vertex $u_2$ lying on
$F$  such that $d(u_1,u_2)\leq K$, for some
constant $K$. If $d(u_2,\tau (F))-L\leq d(\iota (F),u_2)\leq
d(u_2,\tau (F))+L$ for some constant $L$ then 
\begin{equation*}
|R|\leq \frac{|h|_H}{2}+K+\frac{L}{2}+M+2\delta +1.
\end{equation*}
\end{lemma}
\begin{proof}
Without loss of generality let $u_1=\tau (\theta(A))$. Also we shall let $h'$
be a minimal word such that $h'=_H h$. 
Consider the vertex $u_2$. By Lemma
\ref{la} part $1.$, there exists a vertex $u_3$ on $h'\cup R\cup R^{-1}$
such that $d(u_2,u_3)\leq 2\delta$. First suppose that $u_3$ lies on
either $R$ or $R^{-1}$. Without loss of generality we shall assume
this to be $R$. Now by Lemma \ref{la} part 3 we can see that
$d(\iota (F),u_2)=d(\iota (F),u_3)$. By the
triangle inequality $d(u_1,u_3)\leq K+2\delta$ so there exists a path $s$
in Cayley graph from $u_1$ to $u_3$ of length at most $K+2\delta
$. Let $R=R_1R_2$ such that $u_3=\iota (R_1)=\tau (R_2)$. Cut
and paste along $s$ as shown in Figure \ref{cuts}.
\begin{figure}[ht]
\begin{center}
\psfrag{x}{{\scriptsize $\theta(A)$}}
\psfrag{y}{{\scriptsize $\theta(B)$}}
\psfrag{z}{{\scriptsize $\theta(C)$}}
\psfrag{r1}{{\scriptsize $F_1$}}
\psfrag{r2}{{\scriptsize $F_2$}}
\psfrag{f1}{{\scriptsize $R_1$}}
\psfrag{f2}{{\scriptsize $R_2$}}
\psfrag{s}{{\scriptsize $s$}}
\psfrag{v}{{\scriptsize $h'$}}
\psfrag{u1}{{\tiny $u_1$}}
\psfrag{u2}{{\tiny $u_2$}}
\psfrag{u3}{{\tiny $u_3$}}
\includegraphics[scale=0.4]{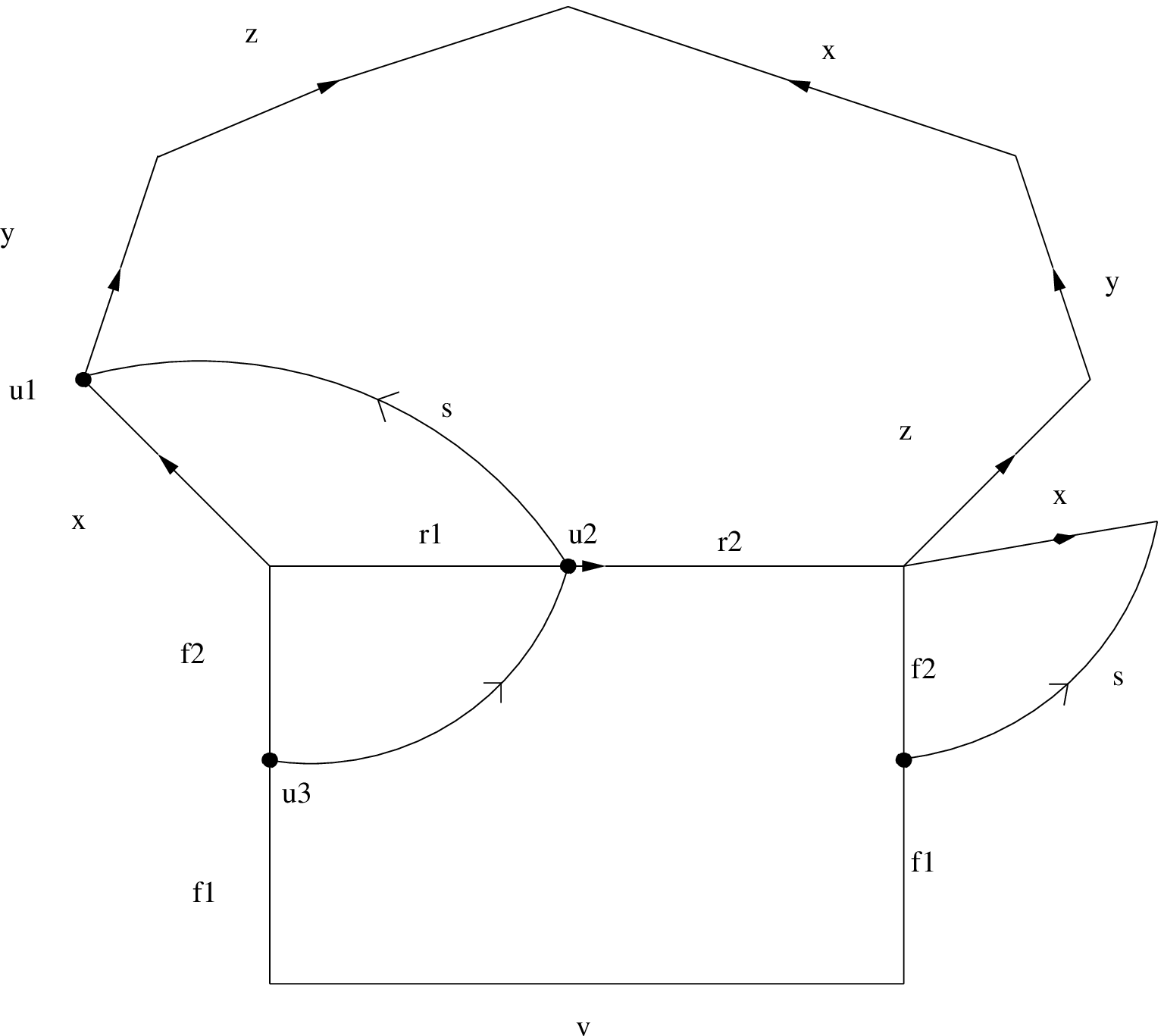}
\caption{Cut and paste along $s$}\label{cuts}
\end{center}
\end{figure}
It is easy to see from the Cayley graph that 
\begin{equation*}
h =_H h'= _H
\underbrace{R_1s}_{R'}\theta(B)\theta(C)\theta(A^{-1})\theta(B^{-1})\theta(C^{-1})
\theta(A)\underbrace{s^{-1}R_1^{-1}}_{(R')^{-1}}.
\end{equation*}
Thus we have a new cyclic permutation of $W$, with the last letter $A$ being a long
edge.
Now, since  $\hat{W}$ was chosen to be the cyclic permutation which
end in a long letter such that $R$ is minimal over all cyclic permutations
which ends in a long letter, it follows that
 $|R|=|R_1|+|R_2|\leq |s|+|R_1|$. This
implies that $d(\iota (F),u_2)=|R_2|\leq |s|\leq K+\delta $. 
 Therefore, by the hypothesis
\begin{eqnarray*}
|F|& = & d(\iota (F),u_2)+d(u_2,\tau (F))\\
   &\leq & 2d(\iota (F),u_2)+L\\
   &\leq & 2K+4\delta +L. 
\end{eqnarray*}  
Thus by Lemma \ref{lb}
\begin{eqnarray*}
|R| &\leq & \frac{1}{2}(|F|+|h'|)+M+1\\
    &\leq & \frac{1}{2}(2K+4\delta +L+|h|_H)+M+1\\
    &\leq & \frac{|h|_H}{2}+K+\frac{L}{2}+M+2\delta +1.
\end{eqnarray*}  
As required.

Now suppose that $u_3$ lies on $h'$. It is easy to see that either
$d(u_3,\iota (h))\leq \frac{|h|_H}{2}$ or $d(u_3,\tau (h))\leq
\frac{|h|_H}{2}$. Without loss of generality we shall assume the
former. Using the triangle in equality it follows  that 
\begin{eqnarray*}
d(u_1,\iota (h)) &\leq & d(u_1,u_2)+d(u_2,u_3)+\frac{|h|_H}{2}\\
                       &\leq & K+2\delta +\frac{|h|_H}{2}.
\end{eqnarray*}
Therefore, there exists a path $t$ of length at most $K+2\delta
+\frac{|h|_H}{2}$ from $\iota
(h)$ to $u_1$. Since $t=_H R\theta(A)$, it is easy to see that
\begin{eqnarray*}
h=_H h' &=_H &
R\theta(A)\theta(B)\theta(C)\theta(A)^{-1}\theta(B)^{-1}\theta(C)^{-1}R^{-1}\\
& =_H &
\underbrace{t}_{R''}\theta(B)\theta(C)\theta(A)^{-1}\theta(B)^{-1}\theta(C)^{-1}
\theta(A)\underbrace{t^{-1}}_{(R'')^{-1}}.
\end{eqnarray*}
Again, from our choice of cyclic permutation of $W$,  we know that
$|R|\leq |t|\leq K+2\delta +\frac{|h|_H}{2}$. Hence the lemma holds.
\end{proof}
Suppose that $F$ has form $2$, that is $F=A_1A_2^{-1}$.  By the
triangle inequality it follows that
\begin{eqnarray*}
|A_2|-|\xi _1|-|\xi _2| & \leq |A_1| \leq &|A_2|+|\xi _1|+|\xi _2|\\
\implies\qquad |A_2|-12(12l+M+4) &\leq  |A_1| \leq &|A_2|+12(12l+M+4).
\end{eqnarray*}
Also, by Lemma \ref{lsc}, $\tau(A_1)$ on $F$ in $\Gamma_X(H)$ is within $5l+M+3$ of
a terminal vertex of some long edge of $\theta(\hat{W})$. Therefore, 
Lemma \ref{conj} implies that 
\begin{eqnarray*}
|R|    &\leq & \frac{|h|_H}{2}+5l+M+3+\frac{12(12l+M+4)}{2}+M+2\delta +1\\
  &\leq &  \frac{|h|_H}{2}+8M+59l+2\delta+28.\\
\end{eqnarray*}
Thus in this case the Proposition  holds. Now if $F$ takes form $3$
or $4$ we follow the same procedure.
For form $3$, that is $F=A_1B_1A_2^{-1}B_2^{-1}$, we have 
\begin{eqnarray*}
\hspace{-2pt}|A_2|\hspace{-1.pt}+\hspace{-1.pt}|B_2|-|\xi _1|-|\xi
_2|-|\xi _3|-|\xi _4| &  \leq |A_1|\hspace{-1pt}+\hspace{-1pt}|B_1|
\leq & \hspace{-6pt}|A_2|\hspace{-1pt}+\hspace{-1pt}|B_2|+ |\xi _1|+|\xi _2|+|\xi _3|+|\xi _4|\\
\implies\, |A_2|\hspace{-1pt}+\hspace{-1pt}|B_2|-12(12l+M+4) &\leq  |A_1|\hspace{-1pt}+\hspace{-1pt}|B_2| \leq
&\hspace{-6pt}|A_2|\hspace{-1pt}+\hspace{-1pt}|B_2|+12(12l+M+4)
\end{eqnarray*}
and we know that  $\tau(B_1)$ on $F$ in
$\Gamma_X(H)$ is within  $5l+M+3$ of
a terminal vertex of some long edge of $\theta(\hat{W})$.
For form $4$, that is $F=A_1B_1C_1A_2^{-1}B_2^{-1}C_2^{-1}$, we have
\begin{align*}
&|A_2|+|B_2|+|C_2|-|\xi _1|-|\xi _2|-|\xi _3|-|\rho _1|-|\rho _2|-|\rho _3| \leq |A_1|+|B_1|+|C_1|\\
\textrm{and}\qquad& |A_1|+|B_1|+|C_1| \leq |A_2|+|B_2|+|C_2| +|\xi
_1|+|\xi _2|+|\xi _3|+|\rho _1+|\rho _2|+|\rho _3|\\
\implies\qquad& |A_2|+|B_2|+|C_2|-12(12l+M+4)\leq |A_1|+|B_1|+|C_1|\\
\textrm{and}\qquad& |A_1|+|B_1|+|C_1| \leq |A_2|+|B_2|+|C_2|+12(12l+M+4)
\end{align*}
and we know that $\tau(C_1)$ on $F$ in
$\Gamma_X(H)$ is within  $5l+M+3$ of
a terminal vertex of some long edge of $\theta(\hat{W})$.
 Thus we may use
Lemma \ref{conj} in both cases to get the required bound for $|R|$. 
Hence the proposition holds.
\end{proof}
Similar lists of forms can of course be constructed for elements of
higher genus. Although, the number of possible extension increases
dramatically with the increase of genus. A.~Vdovina lists the number of
maximal orientable  Wicks forms 
up to genus $15$, see \cite{vdo3}. This  gives an idea of the number
of extensions one  would need to do.

\end{document}